\documentclass[11pt]{article}

\usepackage{latexsym,amsfonts,amsmath,theorem,amssymb,stmaryrd,array,tabularx,bb
m}
\usepackage[usenames,dvipsnames]{xcolor}
\usepackage{epsfig}
\usepackage{psfrag}
\usepackage{mathrsfs}   

\usepackage{a4wide}
\usepackage{url}


\setlength{\bigskipamount}{5ex plus1.5ex minus 2ex}
\setlength{\textheight}{23.2cm} \setlength{\textwidth}{16.2cm}
\setlength{\hoffset}{-0.2cm} \setlength{\voffset}{-1.5cm}

\newtheorem{theorem}{Theorem}
\newtheorem{lemma}[theorem]{Lemma}
\newtheorem{corollary}[theorem]{Corollary}

\newenvironment{proof}{\begin{trivlist}
    \item[\hskip\labelsep{\bf Proof.}]}{$\hfill\Box$\end{trivlist}}

{\theoremstyle{plain} \theorembodyfont{\rmfamily}

\newtheorem{remark}[theorem]{Remark}}
\newtheorem{algorithm}{Algorithm}

\newcommand{\satop}[2]{\stackrel{\scriptstyle{#1}}{\scriptstyle{#2}}}

\newcommand{\bsbeta}{{\boldsymbol{\beta}}}
\newcommand{\bsDelta}{{\boldsymbol{\Delta}}}
\newcommand{\bszeta}{{\boldsymbol{\zeta}}}

\newcommand{\bsnu}{{\boldsymbol{\nu}}}
\newcommand{\bsgamma}{{\boldsymbol{\gamma}}}
\newcommand{\bszero}{{\boldsymbol{0}}}
\newcommand{\bsell}{{\boldsymbol{\ell}}}

\newcommand{\bsk}{{\boldsymbol{k}}}

\newcommand{\bsm}{{\boldsymbol{m}}}

\newcommand{\bsb}{{\boldsymbol{b}}}
\newcommand{\bsq}{{\boldsymbol{q}}}

\newcommand{\bsy}{{\boldsymbol{y}}}
\newcommand{\bsz}{{\boldsymbol{z}}}

\newcommand{\rd}{\mathrm{d}}

\newcommand{\bbA}{\mathbb{A}}
\newcommand{\bbB}{\mathbb{B}}
\newcommand{\bbR}{\mathbb{R}}

\newcommand{\bbN}{\mathbb{N}}

\newcommand{\calC}{\mathcal{C}}
\newcommand{\calI}{\mathcal{I}}
\newcommand{\calP}{\mathcal{P}}

\newcommand{\calW}{\mathcal{W}}
\newcommand{\calF}{\mathcal{F}}
\newcommand{\cD}{\mathcal{D}}
\newcommand{\cN}{\mathcal{N}}
\newcommand{\cQ}{\mathcal{Q}}

\newcommand{\calG}{\mathcal{G}}

\newcommand{\setu}{{\mathrm{\mathfrak{u}}}}

\newcommand{\mask}[1]{{}}

\newcommand{\eps}{\varepsilon}
\newcommand{\indx}{{\mathfrak F}}

\newcommand{\vq}{\vec{q}}
\newcommand{\vx}{\vec{x}}

\newcommand{\cV}{\mathcal{V}}
\newcommand{\cC}{\mathcal{C}}
\newcommand{\cW}{\mathcal{W}}

\newcommand{\R}{\mathbb{R}}
\newcommand{\cT}{\mathcal{T}}

\newcommand{\be}{\begin{equation}}
\newcommand{\ee}{\end{equation}}

\newtheorem{proposition}[theorem]{Proposition}

\newcommand{\scA}{{\mathscr{A}}}
\newcommand{\scL}{{\mathscr{L}}}


%
%

%
\DeclareMathOperator*{\Frac}{frac}

\graphicspath{{./figures/}}

\numberwithin{equation}{section}

\setlength{\marginparwidth}{1in}

\author{F.Y. Kuo, R. Scheichl, Ch. Schwab, I.H. Sloan and E. Ullmann}
\title{Multilevel Quasi-Monte Carlo Methods for Lognormal Diffusion Problems}
\date{} 
\begin{document}
\maketitle

\begin{abstract}
In this paper we present a rigorous cost and error analysis of a
multilevel estimator based on randomly shifted Quasi-Monte Carlo (QMC)
lattice rules for lognormal diffusion problems. These problems are
motivated by uncertainty quantification problems in subsurface flow. We
extend the convergence analysis in [Graham et al., Numer.~Math.~2014] to
\emph{multilevel} Quasi-Monte Carlo finite element discretizations and
give a constructive proof of the dimension-independent convergence of the
QMC rules. More precisely, we provide suitable parameters for the
construction of such rules that yield the required variance reduction for
the multilevel scheme to achieve an $\varepsilon$-error with a cost of
$\mathcal{O}(\varepsilon^{-\theta})$ with $\theta < 2$, and in practice
even $\theta \approx 1$, for sufficiently fast decaying covariance kernels
of the underlying Gaussian random field inputs. This confirms that the
computational gains due to the application of multilevel sampling methods
and the gains due to the application of QMC methods, both demonstrated in
earlier works for the same model problem, are complementary. A series of
numerical experiments confirms these gains. The results show that in
practice the multilevel QMC method consistently outperforms both the
multilevel MC method and the single-level variants even for non-smooth
problems.
\end{abstract}

\section{Introduction}
This paper gives a rigorous error analysis, together with numerical
experiments, for a multilevel Quasi-Monte Carlo scheme applied to linear
functionals of the solution of a typical model elliptic problem of
steady-state flow in random porous media. This problem is of central
importance in the development of efficient uncertainty quantification
tools for subsurface flow problems. The random elliptic partial
differential equation (PDE) reads
\begin{equation}\label{log-diff-primal}
- \nabla \cdot \big(a(\vx,\omega) \nabla u(\vx,\omega)\big) \,=\, f(\vx),\qquad
\textnormal{for } \ \vx \in D, \;\;\omega\in \Omega\,,
\end{equation}
where $D$ is a bounded domain in $\R^d$ for $d=1,2$ or $3$, and $\Omega$
is the sample space of a probability space $(\Omega,\mathcal{A},P)$, with
$\sigma$-algebra $\mathcal{A}$ and probability measure $P$.  A key
feature is the coefficient $a(\cdot,\omega)$, which is a lognormal random
field on the domain $D$.

In the context of flow through a porous medium, $u$ is the hydrostatic
pressure, $a$ is the permeability and $\vq := - a \nabla u$ is the
Darcy flux. This empirical relation between pressure and flux is known
as Darcy's law. When complemented by the conservation condition $\nabla
\cdot \vq = f$, where $f(\vx)$ is a deterministic source term, this leads
to \eqref{log-diff-primal}.

In this paper, the uncertain permeability is assumed to take the form
\begin{equation}\label{kle}
 a(\vx,\omega) \,=\, a_*(\vx) + a_0(\vx) \exp\bigg( \sum_{j=1}^\infty
 \sqrt{\mu_j}\, \xi_j(\vx)\, Y_j(\omega)\bigg), \quad \text{ with }
 Y_j  \sim \cN(0,1) \text{ i.i.d.}\,,
\end{equation}
where $a_*$ and $a_0$ are given deterministic functions on $D$,
satisfying $a_*(\vx)\ge 0$ and $a_0(\vx) > 0$. The sequence
$\{\mu_j\}$ of nonnegative values is assumed to be enumerated in
nonincreasing order, accumulating only at zero, and the sequence
$\{\xi_j\}$ is $L_2(D)$-orthonormal. If they correspond to the eigenvalues
and eigenfunctions of the covariance operator of a correlated Gaussian
random field, then the infinite sum under the bracket in \eqref{kle} is
known as the Karhunen-Lo\`{e}ve (KL) expansion of this Gaussian random
field (see e.g. \cite{Loeve:1978}).

For simplicity, we only study this problem subject to deterministic
boundary conditions. In general, we may have mixed Dirichlet/Neumann
conditions. Let the boundary $\Gamma = \partial D$ be partitioned into two
open, disjoint parts $\Gamma_\cD$ and $\Gamma_\cN$, and let $\vec{n}(\vx)$
denote the exterior unit normal vector to $D$ at $\vx \in \Gamma_\cN$.
Then we set
\begin{align}\label{eq:Diri}
 u(\vx,\cdot) &\,=\,  \phi_\cD(\vx)
 &&\textnormal{for } \ \vx \in \Gamma_\cD \,,\\ \label{eq:Neum}
 \vec{n}(\vx) \cdot \big(a(\vx,\cdot) \nabla u(\vx,\cdot)\big)
 &\,=\, \phi_\cN(\vx)
 &&\textnormal{for } \ \vx \in  \Gamma_\cN \,.
\end{align}
For $d=2,3$, we assume $D$ to be Lipschitz polygonal/polyhedral and each
of $\Gamma_\cD$ and $\Gamma_\cN$ to consist of the union of a finite
number of edges/faces.

Our goal is to obtain statistical information on certain linear
functionals $\calG$ of the solution~$u$ to \eqref{log-diff-primal};
we write $F:= \calG(u)$. In particular, we are interested in the
expected value $\mathbb{E}[F] = \mathbb{E}[\mathcal{G}(u)]$ (with
respect to the probability measure $P$). We need to perform several 
discretisation/truncation steps to obtain \textit{computable}
approximations to $\mathbb{E}[F]$:
\begin{enumerate}
\item[(a)] For a sample $\omega$, we employ a standard Galerkin finite
    element (FE) method with continuous, piecewise linear elements to
    discretise the solution to the PDE \eqref{log-diff-primal} on a
    family of simplicial meshes $\cT_h$ parametrised by their mesh
    size $h$. We approximate entries of the element stiffness matrices
    by a one-point Gauss rule, that is, we evaluate the coefficient at
    the mid point of each mesh element. We denote the FE approximation
    on $\cT_h$ by $u_h$.
\item[(b)] We truncate the KL expansion of $\log (a-a_*)$ in
    \eqref{kle} after a finite number of $s$ terms; we denote the
    $s$-term truncated diffusion coefficient by $a_s$ and the
    corresponding PDE solution by $u_s$. The FE approximation to
    \eqref{log-diff-primal} on $\cT_h$ with $a$ replaced by $a_s$ then
    reduces to a function $u_{h,s}$ of $s$ i.i.d.\ standard Gaussian
    random variables $Y_{j}$, $j=1,\dots,s$. Denoting
    the approximation of $F$ by $F_{h,s}:=\calG(u_{h,s})$, the
    expected value $\mathbb{E}[F]$ is then approximated by
\begin{equation}\label{EG}
 \mathbb{E}[F_{h,s}] \,=\, \int_{\mathbb{R}^s}
 \mathcal{G}(u_{h,s}(\cdot,\bsy)) \prod_{j=1}^s\phi(y_j) \,\rd \bsy\,,
\end{equation}
where $\phi(y)$ denotes the standard Gaussian probability density
function. In porous media flow applications,
the truncation dimension $s$ is often very large.
\item[(c)]
The $s$-dimensional Gaussian integral in \eqref{EG} is then
    approximated by an $N$-point quadrature rule, for example a Monte
    Carlo, sparse grid or Quasi-Monte Carlo rule, or by a multilevel
    variant (see below).
\end{enumerate}

In this paper the quadrature rules are derived from suitable Quasi-Monte
Carlo (QMC) rules (i.e. equal weight rules on the $s$-dimensional unit
cube), as we explain in the next section. The single-level variants
of these rules, as estimators for \eqref{EG}, were analysed for
  the same model problem in the
paper \cite{gknsss:2012} (see also the earlier
paper \cite{kss:2012} for the uniform case). Much emphasis was placed
there on the design of QMC rules that achieve dimension-independent error
bounds with good convergence rates and under weak assumptions.

Multilevel methods were introduced by \cite{Heinrich:2001,Giles:2008}. In
the present context \textit{multilevel Monte Carlo
\textnormal{(}MLMC\textnormal{)} estimators} for \eqref{EG} (multilevel
methods based on Monte Carlo integration) have attracted attention because
of their capacity to reduce the cost without loss of accuracy. The idea of
using such multilevel estimators for the approximation of $\mathbb{E}[F]$
was established in \cite{BarthSchwabZollinger:2011,CGST:2011} and, for the
lognormal case, analysed subsequently in \cite{CST:2011,TSGU:2012}.

The multilevel method is based on a sequence of ${L+1}$ FE approximations
of increasing accuracy as $\ell$ runs from $0$ to $L$, with mesh diameters
$h_\ell$ satisfying $h_0
> h_{1} > \ldots > h_L$.  At level $\ell$ we also truncate the KL expansion
after $s_\ell$ terms,  with $s_0 \le s_1 \le \cdots \le s_L$. With the
level $\ell$ approximation of our output functional $F$  denoted by
 $F_\ell := F_{h_\ell,s_\ell}$,  we can write $F_L$ as the
telescoping sum
\begin{equation}\label{eq:collapsing}
  F_L \,=\, F_0 + \sum_{\ell=1}^L (F_\ell - F_{\ell-1})\,.
\end{equation}
Then by linearity of the expectation operator we have
\begin{equation}\label{eq:expect}
  \mathbb{E}[F_L]
  \,=\, \mathbb{E}[F_0] + \sum_{\ell=1}^L \mathbb{E}[F_\ell - F_{\ell-1}]\,.
\end{equation}
In the MLMC scheme each term is approximated by an independent Monte
Carlo calculation, with a resulting gain in efficiency arising from the
fact that the differences $F_\ell - F_{\ell-1}$ on the higher levels,
although more expensive to compute,
have smaller variance and so require fewer Monte Carlo samples.

In this paper, each of the $L+1$ terms in \eqref{eq:expect} is instead
approximated by a different QMC rule, where the number of quadrature
points can again be chosen to decrease with~$\ell$. For sufficiently
smooth integrands, QMC quadrature rules offer the prospect of a higher
accuracy for the same computational cost compared to standard Monte Carlo
quadrature, or a lower cost for the same accuracy. Hence, the goal of this
paper is to explore the combination of multilevel estimators and QMC
methods by constructing and analysing a \textit{multilevel Quasi-Monte
Carlo \textnormal{(}MLQMC\textnormal{)} estimator} for the approximation
of \eqref{EG}. It was first observed in the context of stochastic
differential equations in \cite{GilesWaterhouse:2009} that the two gains
can be complementary.

In the context of \eqref{log-diff-primal}, single- and
multi-level QMC FE approximations were analysed also in the recent papers
\cite{kss:2015,DKLS14_1319}, but for the simpler case of \emph{uniform}
and \emph{affine} parameter dependence: in those papers the random
variables $Y_j$ appeared linearly in the differential operator, and their
values were assumed to be uniformly distributed on a bounded interval. The
lognormal case considered here is technically more involved and the
error bounds for the QMC rules developed here differ essentially from
those for the uniform case. They require, for example, so-called
``mixed regularity'' of the solution of \eqref{EG}. As shown here,
this mandates stronger assumptions on the data than those required for
MLMC or single-level QMC. The importance of this mixed regularity has
already been recognised in \cite{Harbrecht:2013}. In the present
paper, we establish for the first time $s$-independent quadrature
error bounds for MLQMC estimators and present detailed numerical
experiments indicating that MLQMC methods can outperform single-level
QMC and MLMC methods in terms of accuracy versus computational
cost. Some numerical experiments have also been reported in \cite{Robbe_etal:2015}.

The structure of this paper is as follows.  Section~\ref{sec:qmc}
explains the mechanics of QMC methods, without entering into the
question of approximation quality. Section~\ref{sec:ml-qmc} introduces
the multilevel QMC method (MLQMC), establishes an abstract convergence
theorem, compares the complexity of MLQMC to other estimators, and
discusses practical aspects and a practical
implementation. Section~\ref{sec:numerical} presents numerical results
which confirm the theoretical results. All technical parts related to
the necessary QMC convergence and construction theory are relegated to
Section~\ref{sec:theory}.

\section{Quasi-Monte Carlo Quadrature}
\label{sec:qmc}

Quasi-Monte Carlo quadrature rules are equal weight quadrature rules
for integrals over the $s$-dimensional unit cube $[0,1]^s$. For this
reason we introduce a change of variables $\bsy =
\boldsymbol{\Phi}_s^{-1}(\bszeta)$, where
$\boldsymbol{\Phi}_s^{-1}(\bszeta) :=
[\Phi^{-1}(\zeta_1),\Phi^{-1}(\zeta_2),\ldots,\Phi^{-1}(\zeta_s)]^{\tt
  T}$ denotes the inverse cumulative normal distribution applied to
each component of $\bszeta \in [0,1]^s$. We then obtain from
\eqref{EG} the expression
\begin{equation}\label{unit_EG}
 \mathbb{E}[F_{h,s}] \,=\, \int_{[0,1]^s}
 F_{h,s}(\boldsymbol{\Phi}_s^{-1}(\bszeta)) \,\rd \bszeta \,.
\end{equation}
For the approximation of $\mathbb{E}[F_{h,s}]$ in a single-level scheme,
we employ a specific kind of QMC quadrature rule, namely, the
\textit{shifted rank-$1$ lattice rule} given by
\begin{equation}\label{SL-r}
  {\cQ_{s,N}(F_{h,s};\bsDelta)} \,:=\, \frac{1}{N} \sum_{i=1}^N
 F_{h,s}\left(\boldsymbol{\Phi}_s^{-1}\left(\Frac\left(\frac{i \,
 \bsz}{N}+ \bsDelta \right)\right)\right)\,, \quad i=1,\dots,N\,,
\end{equation}
where $\bsz \in \mathbb{N}^s$ is the associated \textit{generating vector}
and $\bsDelta \in [0,1]^s$ is the \textit{shift}. The symbol
$\Frac(\cdot)$ denotes the fractional part function, which is to be
applied to every component of the $s$-dimensional input vector.  For the
general theory and fast construction of QMC lattice rules for the
$s$-dimensional cube, see e.g., \cite{DKS:2013} as well as
\cite{NC06,CKN06,DPW08}. For the particular case of integrals defined
initially over $\R^s$, see e.g., \cite{KSWWat10, NK14}.

The purely deterministic estimator \eqref{SL-r} for $\mathbb{E}[F_{h,s}]$
is biased. To remove this \textit{statistical bias} we construct the
associated \textit{randomly shifted lattice rule} where the random shift
$\bsDelta$ is uniformly distributed over $[0,1]^s$. We then use the sample
average of $\cQ_{s,N}(F_{h,s};\bsDelta)$ over a fixed, finite number $R$
of shift realizations as an estimator for $\mathbb{E}[F_{h,s}]$. We arrive
at
\begin{equation}\label{SL-rQMC}
 \cQ_{s,N,R}(F_{h,s}) \,:=\, \frac{1}{R} \sum_{k=1}^R \cQ_{s,N}(F_{h,s};\bsDelta_k) \,,
\end{equation}
where $\cQ_{s,N}(F_{h,s};\bsDelta_k)$ is defined in \eqref{SL-r},
$k=1,\dots,R$. Now, let $\mathbb{E}_\Delta[\cdot]$ denote the expected
value with respect to one or more random shifts.
Since
\begin{equation}
\begin{split}
\label{eq:unbiased}
\mathbb{E}_{\Delta}[\cQ_{s,N}(F_{h,s};\Delta)]
&\,=\,
\int_{[0,1]^s} \frac{1}{N} \sum_{i=1}^N F_{h,s}\left(\boldsymbol{\Phi}_s^{-1}
\left(\text{frac}\left (\frac{i\, \bsz}{N}+\Delta\right)\right)\right) \,\rd\Delta \\
&\,=\,
\frac{1}{N} \sum_{i=1}^N \int_{[0,1]^s} F_{h,s}(\boldsymbol{\Phi}_s^{-1}(\Delta)) \,\rd\Delta
\,=\,
\mathbb{E}[F_{h,s}] \,, \nonumber
\end{split}
\end{equation}
the quantity in \eqref{SL-rQMC} is an unbiased estimator for
$\mathbb{E}[F_{h,s}]$. However, \eqref{SL-rQMC} is not an unbiased
estimator for $\mathbb{E}[F]$, because the error arising from FE
approximation and from truncation of the KL expansion of $\log
(a-a_*)$ cannot be removed by randomisation of \eqref{SL-r}.
Specifically, the error analysis for randomly shifted lattice rules is
carried out in terms of the root mean square error (RMSE)
\begin{equation}\label{rmse}
e\big(\cQ_{s,N,R}(F_{h,s})\big) \,:=\, \sqrt{\mathbb{E}_{\Delta}
\big[\big(\cQ_{s,N,R}(F_{h,s})-\mathbb{E}[F]\big)^2\big]} \,. 
\end{equation}
Since the random diffusion coefficient $a$ in \eqref{log-diff-primal} is
statistically independent of the random shift in the QMC quadrature rule,
it is easy to see that in the single-level scheme we can split the
RMSE as follows
\begin{equation}\label{rmse:split}
e\big(\cQ_{s,N,R}(F_{h,s})\big)^2 \,=\,
\mathbb{E}_{\Delta}\big[\big(\cQ_{s,N,R}(F_{h,s})-\mathbb{E}[F_{h,s}]\big)^2\big]
+ \big(\mathbb{E}[F_{h,s}-F]\big)^2 \,.
\end{equation}

The second term in \eqref{rmse:split} is usually referred to as
\textit{bias} and can be decreased by choosing a fine enough FE mesh width
$h$ and by including a sufficiently large number $s$ of terms in the KL
expansion of $\log (a-a_*)$, as discussed in \cite{gknsss:2012}. The
first term in \eqref{rmse:split} is the (shift-averaged) QMC quadrature
error; it was analysed in detail in \cite{gknsss:2012} where the crucial
question of choosing the integer vector $\bsz$ in \eqref{SL-r} was fully
addressed.

\section{Multilevel Quasi-Monte Carlo Scheme}
\label{sec:ml-qmc}
Following the MLMC scheme, see \cite{BarthSchwabZollinger:2011,CGST:2011}
and the subsequent MLQMC scheme for the uniform case, see
\cite{kss:2015}, we construct a \textit{multilevel Quasi-Monte Carlo
estimator} for $\mathbb{E}[F]$ by combining estimators of the form
\eqref{SL-rQMC} on a hierarchy of \textit{levels}.

To define our multilevel method, let us assume that we have a nested
sequence of FE spaces $V_{h_0}, V_{h_1},\ldots, V_{h_L}$ of increasing
dimension and let $\cT_{h_0},\cT_{h_1},\ldots,\cT_{h_L}$ be the
corresponding sequence of shape-regular, conforming, simplicial
meshes (i.e., simplicial
partitions of the domain $D$ for which intersections of any two
$d$-simplices are are either empty, an entire side, or an entire face). We
assume that the mesh diameters are strictly decreasing, i.e., $h_\ell >
h_{\ell+1}$\,. Furthermore, we include only the leading $s_\ell$ terms in
the KL expansion of $\log a$ on level $\ell$, subject to the condition
$s_\ell \le  s_{\ell+1}$\,. The approximation of our output functional $F$
that we obtain on level $\ell$ is denoted by $F_\ell := F_{h_\ell,s_\ell}$
as in \eqref{eq:collapsing} and for convenience we set $F_{-1} := 0$.
We can then write \eqref{eq:expect} as
\[
\mathbb{E}[F_L] \,=\, \sum_{\ell=0}^L \mathbb{E}[F_\ell - F_{\ell-1}] \,.
\]
That is, the expected value of the output quantity of interest on the
finest mesh is equal to the expectation on the coarsest mesh, plus a
series of corrections, namely the expected value of the difference of
quantities computed on consecutive FE meshes. We estimate the expected
value $\mathbb{E}[F_\ell - F_{\ell-1}]$ on level $\ell$ by means of the
randomly shifted lattice rule estimator $\cQ_\ell :=
\cQ_{s_\ell,N_\ell,R_\ell}$ defined in \eqref{SL-rQMC} and \eqref{SL-r},
with $N_\ell$ quadrature points and $R_\ell$ random shifts from a uniform
distribution on $[0,1]^{s_\ell}$. The MLQMC estimator for $\mathbb{E}[F]$
then reads
\begin{equation}
\label{ML-rQMC}
 \cQ_{L}^{\rm ML} (F) \, := \, \sum_{\ell=0}^L \cQ_\ell(F_\ell -
 F_{\ell-1}) \,= \, \sum_{\ell=0}^L  \frac{1}{R_\ell}
 \sum_{k=1}^{R_\ell} \frac{1}{N_{\ell}} \sum_{i=1}^{N_{\ell}}
 \left(F_{\ell}(\bsy^{(i,k)}_{\ell})-F_{\ell-1}(\bsy^{(i,k)}_{ \ell } )\right)\,,
\end{equation}
where
$\bsy^{(i,k)}_{\ell}:=\boldsymbol{\Phi}^{-1}_{s_\ell}\big(\Frac\big(i
\bsz_\ell N_\ell^{-1}+\bsDelta_{\ell,k}\big)\big)$ and $\bsz_\ell$ is
the generating vector on level $\ell$ (that will in general be
different from level to level).

Let us define the variance with respect to the shifts $\bsDelta_{\ell,k}$
by
\[
\mathbb{V}_{\Delta}[\cQ_\ell(F_\ell - F_{\ell-1})] \,=\,
\mathbb{E}_{\Delta}\big[\big(\cQ_\ell(F_\ell -
F_{\ell-1})-\mathbb{E}[F_\ell - F_{\ell-1}]\big)^2\big]\,.
\]
Then, since each correction $\mathbb{E}[F_\ell - F_{\ell-1}]$,
$\ell=0,\dots,L,$ is estimated using statistically independent random
shifts, the RMSE of the MLQMC estimator satisfies
\begin{equation}
 e\left(\cQ_{L}^{\rm ML}(F)\right)^2
 \,:=\, \mathbb{E}_\Delta\big[\big(\cQ^{\rm ML}_L(F) -
   \mathbb{E}[F]\big)^2\big]
 \,= \, \sum_{\ell=0}^L \mathbb{V}_{\Delta}[\cQ_\ell(F_\ell - F_{\ell-1})] \,+\,
 \big(\mathbb{E}[F_L - F]\big)^2 \,. \label{eq:bias}
\end{equation}
The second term in \eqref{eq:bias} is the bias introduced by KL
truncation and by FE approximation. It coincides with the
second term of the single-level error in \eqref{rmse:split} for $h=h_L$
and $s=s_L$.

\subsection{Error versus cost analysis}
\label{sec:Cost}

We now extend the cost analysis in \cite[Thm.~1]{CGST:2011} to the
MLQMC estimator $\cQ_L^{\rm ML}(F)$ defined in
\eqref{ML-rQMC}.  We aim at estimating the computational cost, denoted
below by $\mathrm{cost}\left(\cQ_L^{\rm ML}(F)\right)$, necessary to
ensure that the RMSE in \eqref{eq:bias} satisfies\footnote{Throughout the
paper, the notation $A\lesssim B$ indicates that there exists a constant
$c>0$ such that $A\le c B$. The notation $A\eqsim B$ indicates that
$A\lesssim B$ and $B \lesssim A$.} $e \left(\cQ_L^{\rm ML}(F)\right)
\lesssim \eps$, as $\eps\downarrow 0$.
A similar extension of this abstract result has
recently been proved in the context of multilevel stochastic collocation
methods in \cite{MLSC}. However, our result here is tailored to MLQMC and
includes the truncation error which was ignored in \cite{MLSC}.

We assume the number of degrees of freedom $M_\ell :=
\text{dim}(V_{h_\ell})$, associated with the FE approximation
$F_{\ell}:=F_{h_\ell,s_\ell}$ on level $\ell=0,\dots,L$, satisfies
\begin{equation}\label{ass:dofs}
 M_\ell \,\eqsim\, h_\ell^{-d}\;.
\end{equation}
The assumption \eqref{ass:dofs} includes quasi-uniform families of meshes
and meshes with local refinement near corners or edges of the domain.

Apart from the negligible post-processing cost to compute the quantity of
interest, the cost of computing one sample
$F_{h_\ell,s_\ell}(\bsy^{(i,k)}_{\ell})$ on level $\ell$ is
$\calC^{\text{perm}}_\ell + \calC^{\text{solve}}_\ell$, where
$\calC^{\text{perm}}_\ell$ denotes the cost of evaluating the
$s_\ell$-term truncation $a_{s_\ell}$ of the
permeability field \eqref{kle} at all quadrature points for each of the
$\mathcal{O}(h_\ell^{-d})$ elements of the FE mesh, and
$\calC^{\text{solve}}_\ell$ denotes the cost of solving a sparse linear
equation system with $M_\ell$ unknowns. We assume that
\[
\calC^{\text{perm}}_\ell \,\lesssim\, s_\ell\, h_\ell^{-d} \quad \text{and}
\quad \calC^{\text{solve}}_\ell \,\lesssim\, h_\ell^{-\gamma}\,,
\quad \text{with} \ \ \gamma \ge d\,.
\]
In the case of a robust (algebraic) multigrid solver, we have $\gamma
= d + \delta$, for arbitrarily small $\delta > 0$. In fact, the number
of iterations for a robust multigrid solver typically grows only
logarithmically with $M_\ell$ and the cost per iteration is
$\mathcal{O}(M_\ell)$ (cf.~\cite{Vass:2008} and the references therein).

We will first state an abstract complexity theorem in which we make
only very limited assumptions. To avoid having to treat the case $\ell
= 0$ separately, in the ensuing assumptions M1 -- M3 we adopt the
convention $h_{-1} := 1$, $s_{-1} := 1$, and recall that $F_{-1} := 0$.
\begin{theorem}
\label{thm:mlqmc} Suppose that $\mathbb{E}_\bsDelta[\cQ_\ell (F_\ell -
F_{\ell-1})] = \mathbb{E}(F_\ell - F_{\ell-1})$ and that there are
nonnegative constants $\alpha,\alpha', \beta, \beta', \gamma, \lambda$
such that
\begin{itemize}
\item[\bf{M1}.] $\displaystyle \big| \mathbb{E}[F_{L} - F] \big|
    \ \lesssim \ h_L^{\alpha} + s_L^{-\alpha'} $,
\item[\bf{M2}.] $\displaystyle \mathbb{V}_\bsDelta[\cQ_\ell (F_\ell -
    F_{\ell-1})] \
  \lesssim \ R_\ell^{-1} N_\ell^{-1/\lambda} \,
   \left(h_{\ell-1}^{\beta} + (1-\delta_{s_\ell,s_{\ell-1}})
   s_{\ell-1}^{-\beta'} \right), \, $
\item[\bf{M3}.]
$\displaystyle \mathrm{cost}(\cQ_{\ell}(F_\ell - F_{\ell-1})) \ \lesssim
   \ R_\ell\, N_\ell \, \big(s_\ell h_\ell^{-d} + h_{\ell}^{-\gamma}\big)$,
\end{itemize}
for all \ $0 \le \ell \le L$, and where $\delta_{\cdot,\cdot}$ denotes the
Kronecker delta. Then\vspace{-1ex}
\begin{align*}
e\big(\cQ^\mathrm{ML}_L(F)\big)^2 &\;\lesssim\; h_L^{2\alpha} + s_L^{-2\alpha'} +
    \sum_{\ell=0}^L R_\ell^{-1} N_\ell^{-1/\lambda} \,
    \left(h_{\ell-1}^{\beta} + (1-\delta_{s_\ell,s_{\ell-1}}) s_{\ell-1}^{-\beta'} \right) \;\mbox{ and
                                      }\; \\
    \mathrm{cost}\big(\cQ^\mathrm{ML}_L(F)\big) &\;\lesssim\; \sum_{\ell=0}^L R_\ell\,
    N_\ell \left(s_\ell\, h_\ell^{-d} + h_{\ell}^{-\gamma}\right)\,.
\end{align*}
\end{theorem}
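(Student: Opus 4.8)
The plan is to start from the exact RMSE identity \eqref{eq:bias} and then feed in the four structural assumptions term by term. First I would re-derive \eqref{eq:bias} to make explicit where M2 enters: writing $\cQ^{\rm ML}_L(F) - \mathbb{E}[F] = \big(\cQ^{\rm ML}_L(F) - \mathbb{E}[F_L]\big) - \mathbb{E}[F_L - F]$ and squaring, the cross term equals $-2\,\mathbb{E}[F_L - F]\,\mathbb{E}_\Delta\big[\cQ^{\rm ML}_L(F) - \mathbb{E}[F_L]\big]$, which vanishes because M2 gives $\mathbb{E}_\Delta[\cQ_\ell(F_\ell - F_{\ell-1})] = \mathbb{E}[F_\ell - F_{\ell-1}]$ on each level, so that the telescoping structure of \eqref{ML-rQMC} yields $\mathbb{E}_\Delta[\cQ^{\rm ML}_L(F)] = \sum_{\ell=0}^L \mathbb{E}[F_\ell - F_{\ell-1}] = \mathbb{E}[F_L]$. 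The residual square then splits into the variance $\mathbb{V}_\Delta[\cQ^{\rm ML}_L(F)]$ plus the deterministic bias $\big(\mathbb{E}[F_L - F]\big)^2$; since the shifts $\bsDelta_{\ell,k}$ are drawn independently across levels, the variance of the sum is the sum of the per-level variances, reproducing \eqref{eq:bias}.

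The remaining steps are direct substitutions. For the bias I would apply M1 together with the elementary inequality $(a+b)^2 \le 2(a^2+b^2)$ to obtain $\big(\mathbb{E}[F_L - F]\big)^2 \lesssim h_L^{2\alpha} + s_L^{-2\alpha'}$. For the stochastic part I would insert the per-level bound M3 into the sum, which returns exactly $\sum_{\ell=0}^L R_\ell^{-1} N_\ell^{-1/\lambda}\big(h_{\ell-1}^{\beta} + (1-\delta_{s_\ell,s_{\ell-1}}) s_{\ell-1}^{-\beta'}\big)$. Adding the two contributions gives the asserted bound on $e\big(\cQ^{\rm ML}_L(F)\big)^2$. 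For the cost, the total work is the sum of the per-level contributions because the $L+1$ estimators are computed independently; summing M4 over $0 \le \ell \le L$ immediately produces $\mathrm{cost}\big(\cQ^{\rm ML}_L(F)\big) \lesssim \sum_{\ell=0}^L R_\ell N_\ell\big(s_\ell h_\ell^{-d} + h_\ell^{-\gamma}\big)$.

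Since the statement is an assembly of M1--M4, there is no genuinely hard analytic step, and I would keep the write-up short. The one point that requires care is verifying that the cross term in the RMSE vanishes: this relies entirely on the per-level unbiasedness M2 combining through the telescoping structure of \eqref{ML-rQMC} so that $\mathbb{E}_\Delta[\cQ^{\rm ML}_L(F)] = \mathbb{E}[F_L]$. Without M2 the estimator would carry an extra quadrature bias that would not collapse into the clean split \eqref{eq:bias}, and the two error sources would no longer decouple. The genuine mathematical content of the paper lies not in this abstract complexity theorem but in the subsequent verification that M1 and M3 hold with favourable exponents $\alpha, \alpha', \beta, \beta', \lambda$ for the lognormal diffusion problem, which is relegated to the QMC convergence and construction theory; here the goal is simply to expose the mechanism by which those exponents govern the error-versus-cost trade-off.
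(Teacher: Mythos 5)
Your proposal is correct and takes essentially the same route as the paper: the paper's proof simply invokes the decomposition \eqref{eq:bias} (whose derivation from the per-level unbiasedness M2 and the independence of the shifts across levels you spell out explicitly) and then substitutes M1, M3 and M4, exactly as you do. The only blemish is a harmless sign slip in your splitting, which should read $\cQ^{\rm ML}_L(F)-\mathbb{E}[F]=\big(\cQ^{\rm ML}_L(F)-\mathbb{E}[F_L]\big)+\mathbb{E}[F_L-F]$; since the cross term vanishes either way and the bias enters squared, nothing downstream is affected.
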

\begin{proof}
The proof follows immediately from \eqref{eq:bias} and the definition of
$\mathrm{cost}\big(\cQ^\mathrm{ML}_L(F)\big)$.
\end{proof}

We will now focus on a specific application of this theorem, with a fixed
number of terms in the KL expansion. We assume that the sampling cost is
the dominant part, which ultimately is the case with an optimal multigrid
solver in the limit as the error tolerance goes to zero. We are not
considering the case where the number of KL terms on the coarser levels is
decreased, even though this may in some cases reduce the overall
asymptotic cost of the multilevel algorithm, because it would lead to a
very complicated complexity theorem and the analysis of Assumption M2 in
Section~\ref{sec:theory} would become significantly more involved.
\begin{corollary} \label{cor:mlqmc}
Let $\gamma \le d + \alpha/\alpha'$ and let the assumptions of
Theorem~\ref{thm:mlqmc} hold. If we choose $ h_\ell \eqsim 2^{-\ell}$,
$R_\ell = R$ and $s_\ell = s_L \eqsim h_L^{-\alpha/\alpha'}$ for some $R
\in \mathbb{N}$ and for $\ell=0,\ldots, L$, then for any $\varepsilon >
0$, there exists a choice of  $L$ and of $N_0, \dots, N_L$  such that
\begin{align} \label{eq:cost}
 e\big(\cQ^\mathrm{ML}_L(F)\big)^2 \lesssim \varepsilon^2 \ \
  \text{and} \ \
 {\rm cost}\big(\cQ^\mathrm{ML}_L(F)\big)
  \lesssim
  \begin{cases}
  \varepsilon^{-2\lambda-1/\alpha'}
  & \mbox{when } \beta \lambda > d \,,
  \\
  \varepsilon^{-2\lambda-1/\alpha'}\, (\log_2 \varepsilon^{-1})^{\lambda+1}
  & \mbox{when } \beta \lambda = d\,,
  \\
  \varepsilon^{-2\lambda -1/\alpha' - (d-\beta\lambda)/\alpha}
  & \mbox{when } \beta \lambda < d \,.
  \end{cases}
\end{align}
\end{corollary}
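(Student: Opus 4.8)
The plan is to start from the two bounds furnished by Theorem~\ref{thm:mlqmc}, eliminate the truncation contribution using the fixed choice $s_\ell = s_L$, balance the bias against $\eps$ by selecting $L$, and then solve a constrained optimisation for the numbers $N_\ell$ of quadrature points on each level. First I would dispose of the bias. Since $s_L \eqsim h_L^{-\alpha/\alpha'}$ we have $s_L^{-2\alpha'} \eqsim h_L^{2\alpha}$, so the first two terms of the error bound collapse to $\lesssim h_L^{2\alpha}$. With $h_\ell \eqsim 2^{-\ell}$, choosing $L := \lceil \alpha^{-1} \log_2 \eps^{-1}\rceil$ gives $h_L \eqsim \eps^{1/\alpha}$ and hence $h_L^{2\alpha} \lesssim \eps^2$; this simultaneously fixes $s_L \eqsim \eps^{-1/\alpha'}$.

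Next, because $s_\ell = s_{\ell-1} = s_L$ for every $\ell \ge 1$, the Kronecker delta in M3 annihilates the truncation term on all levels $\ell \ge 1$, while at $\ell = 0$ the convention $h_{-1}=s_{-1}=1$ renders that bracket a harmless constant. Writing $V_\ell := h_{\ell-1}^{\beta} \eqsim 2^{-(\ell-1)\beta}$ for the remaining variance coefficient, it then remains to pick $N_0,\dots,N_L$ so that the statistical error $\sum_{\ell=0}^L R^{-1} N_\ell^{-1/\lambda} V_\ell \lesssim \eps^2$ while the cost stays small. For the cost I would first simplify the per-sample work: the hypothesis $\gamma \le d + \alpha/\alpha'$ is exactly what is needed to deduce $h_\ell^{-\gamma} \lesssim s_L h_\ell^{-d}$ for all $0\le\ell\le L$ (using $h_\ell \ge h_L$ and $s_L \eqsim h_L^{-\alpha/\alpha'}$), so the solve cost is dominated by the sampling cost and $\mathrm{cost}(\cQ_L^{\rm ML}(F)) \lesssim \sum_{\ell=0}^L R\,N_\ell\,C_\ell$ with $C_\ell := s_L h_\ell^{-d} \eqsim \eps^{-1/\alpha'} 2^{\ell d}$.

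The crux is then the constrained optimisation: minimise $\sum_\ell N_\ell C_\ell$ subject to $\sum_\ell N_\ell^{-1/\lambda} V_\ell \eqsim R\eps^2$. A Lagrange-multiplier (equivalently AM--GM) argument yields the optimal allocation $N_\ell \eqsim \kappa\,(V_\ell/C_\ell)^{\lambda/(\lambda+1)}$, under which both the per-level variance and the per-level cost become proportional to the single weight $W_\ell := (V_\ell^{\lambda} C_\ell)^{1/(\lambda+1)}$. Choosing $\kappa$ to saturate the variance constraint then gives the clean bound $\mathrm{cost}(\cQ_L^{\rm ML}(F)) \lesssim \eps^{-2\lambda}\big(\sum_{\ell=0}^L W_\ell\big)^{\lambda+1}$.

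It finally remains to evaluate $\sum_{\ell=0}^L W_\ell \eqsim \eps^{-1/(\alpha'(\lambda+1))} \sum_{\ell=0}^L 2^{\ell(d-\beta\lambda)/(\lambda+1)}$, and the three regimes of the corollary are precisely the three signs of the exponent $d-\beta\lambda$: for $\beta\lambda>d$ the geometric sum converges to a constant; for $\beta\lambda=d$ it contributes the factor $L\eqsim \ln\eps^{-1}$; and for $\beta\lambda<d$ it is dominated by its last term $2^{L(d-\beta\lambda)/(\lambda+1)}\eqsim \eps^{-(d-\beta\lambda)/(\alpha(\lambda+1))}$. Substituting each into the cost bound and raising to the power $\lambda+1$ reproduces the three stated rates. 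The main technical nuisance, rather than a genuine obstacle, is that the optimal $N_\ell$ is not an integer (nor necessarily admissible for the lattice construction of Section~\ref{sec:theory}) and must be replaced by a rounded value; one checks that this inflates the cost by at most a bounded factor while only tightening the variance, so the asymptotic rates are unaffected.
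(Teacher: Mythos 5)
Your proposal is correct and follows essentially the same route as the paper's proof: balance the bias by choosing $h_L \eqsim \eps^{1/\alpha}$ (hence $L \eqsim \alpha^{-1}\log_2 \eps^{-1}$ and $s_L \eqsim \eps^{-1/\alpha'}$), use $\gamma \le d + \alpha/\alpha'$ to absorb the solve cost into the sampling cost, determine the optimal $N_\ell$ by a Lagrange-multiplier argument (your allocation $N_\ell \eqsim \kappa\,(V_\ell/C_\ell)^{\lambda/(\lambda+1)}$ is exactly the paper's $N_\ell \eqsim N_0 (h_\ell/h_0)^{(d+\beta)\lambda/(\lambda+1)}$ in different notation), and finish with the same three-case geometric-series analysis of $\sum_\ell 2^{\ell(d-\beta\lambda)/(\lambda+1)}$. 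Your intermediate form $\mathrm{cost} \lesssim \eps^{-2\lambda}\big(\sum_\ell W_\ell\big)^{\lambda+1}$ is just a cleaner repackaging of the paper's substitution of \eqref{eq:def_N} and \eqref{eq:def_N0} into \eqref{bounds:proofa}, and your treatment of the integer rounding is no less careful than the paper's.
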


\begin{proof}
Using the particular choices for $h_\ell$, $s_\ell$ and $R_\ell$ and the
assumption that $\gamma \le d+\alpha/\alpha'$, we obtain
\begin{equation}
\label{bounds:proofa}
e\big(Q^\mathrm{ML}_L(F)\big)^2 \lesssim  h_L^{2\alpha} +
\sum_{\ell=0}^L N_\ell^{-1/\lambda} \, h_\ell^{\beta}
\quad \mbox{and}\quad
\mathrm{cost}\big(Q^\mathrm{ML}_L(F)\big) \lesssim
h_L^{-\alpha/\alpha'} \sum_{\ell=0}^L N_\ell \, h_\ell^{-d}\;.
\end{equation}

Thus, a sufficient condition for the MSE to be bounded by a constant
times $\varepsilon^2$ is that each of the two terms in the above error
bound is $\mathcal{O}(\varepsilon^2)$, which in particular leads to
the choice $2^{-L} \eqsim h_L \eqsim \varepsilon^{1/\alpha}$ to bound
the bias error, and thus
\begin{equation} \label{eq:def_L}
L = \left\lceil \frac{1}{\alpha} \log_2(\varepsilon^{-1}) + c_1 \right\rceil
\end{equation}
for some constant $c_1 \in \mathbb{R}$ that is independent of $\varepsilon$.

We now equate sampling and bias error to within a constant factor
$c_2>0$, again independent of $\varepsilon$ and of $\ell$. To minimize
the cost subject to this constraint, we consider the functional
\[
g(N_0,\ldots,N_L,\mu) \,:=\, h_L^{-\alpha/\alpha'} \sum_{\ell=0}^L
N_\ell\, h_\ell^{-d} \;+\; \mu \left( \sum_{\ell=0}^L N_\ell^{-1/\lambda}
h_{\ell}^{\beta} \;-\ c_2 h_L^{2\alpha} \right),
\]
where $\mu$ is a Lagrange multiplier and where we treat $N_0, \ldots,
N_L$ as continuous variables. We look for its stationary point. This
leads to the first--order, necessary optimality conditions
\begin{align}
\label{opt1}
  \frac{\partial g}{\partial N_\ell}
  &\,=\, h_L^{-\alpha/\alpha'}  h_\ell^{-d}
  -  \frac{\mu}{\lambda} N_\ell^{-1/\lambda-1} h_\ell^{\beta} \,=\, 0
  \qquad\mbox{for}\quad \ell=0,\ldots,L\,.
\\
\label{opt2}
 \frac{\partial g}{\partial \mu} &\,=\, \sum_{\ell=0}^L N_\ell^{-1/\lambda}
  h_{\ell}^{\beta} - c_2 h_L^{2\alpha} = 0
\,.
\end{align}
Rearranging \eqref{opt1}, we see that $N_\ell^{{1/\lambda+1}} h_\ell^{-(d
+ \beta)}$ is independent of $\ell$. Therefore, the numbers of QMC points
should be chosen according to
\begin{equation}
\label{eq:def_N}
N_\ell \,=\, \left\lceil N_0
  \left(\frac{h_\ell}{h_0}\right)^{{(d+\beta)\lambda/(\lambda+1)}} \right\rceil
 \qquad\text{for}\quad \ell= 1,\ldots,L\,.
\end{equation}
A suitable choice for $N_0$ can then be deduced from \eqref{opt2}.
Substituting \eqref{eq:def_N} into \eqref{opt2}
 and using the fact that $h_0 \eqsim 2^0 = 1$, we obtain
$N_0^{1/\lambda} \eqsim 2^{2\alpha L} \sum_{\ell=0}^L
h_\ell^{(\beta\lambda-d)/(\lambda+1)}\,.$ Since $h_\ell\eqsim 2^{-\ell}$,
it follows from properties of geometric series that
\begin{align}
\label{geom_sum}
  \sum_{\ell=0}^L h_\ell^{(\beta\lambda-d)/(\lambda+1)}
  \,\eqsim\,
  \sum_{\ell=0}^L 2^{\ell (d-\beta\lambda)/(\lambda+1)}
  &\,\eqsim\, E_L \,:=\,
  \begin{cases}
  1
  & \mbox{when } \beta\lambda > d\,,
  \\
  L
  & \mbox{when } \beta\lambda = d\,,
  \\
  2^{L (d-\beta\lambda)/(\lambda+1)}
  & \mbox{when } \beta\lambda < d\,.
  \end{cases}
\end{align}
and hence
\begin{equation}
 \label{eq:def_N0}
 N_0 \;\eqsim\;  2^{L (2\alpha\lambda)} \, E_L^{\lambda}\,.
\end{equation}
Finally, we substitute \eqref{eq:def_N} and \eqref{eq:def_N0} into
\eqref{bounds:proofa} and use \eqref{geom_sum} to bound that cost
asymptotically, as $L\to\infty$, by
\begin{align*}
  {\rm cost}(\cQ^\mathrm{ML}_L(F))
  \,\lesssim\, h_L^{-\alpha/\alpha'} N_0 \sum_{\ell=0}^L
  h_\ell^{{(\beta\lambda-d)/(\lambda+1)}} \, \lesssim \,
  \begin{cases}
  2^{L(2\alpha\lambda+\alpha/\alpha')}
  & \mbox{when } \beta\lambda > d\,,
  \\
  2^{L(2\alpha\lambda +\alpha/\alpha')} L^{\lambda+1}
  & \mbox{when } \beta\lambda = d\,,
  \\
  2^{L(2\alpha\lambda +\alpha/\alpha' + d -\beta\lambda)}
  & \mbox{when } \beta\lambda < d \,.
  \end{cases}
\end{align*}
The bound in \eqref{eq:cost} then follows from \eqref{eq:def_L},
i.e., using the relation $2^L \eqsim \eps^{-1/\alpha}$.
\vspace{1.5ex}
\end{proof}
\subsection{Discussion and comparison with other estimators}
\label{sec:discussion}
First, let us check the assumptions in Theorem~\ref{thm:mlqmc}
for the lognormal model problem \eqref{log-diff-primal}.
\begin{itemize}
\item We observe that Assumption M1 relates only to the FE error and
  the KL truncation error, and is not specific to MLQMC. It has been
  studied extensively in \cite{CST:2011, aretha_paper, TSGU:2012,
    gknsss:2012}. The assumptions on the data in Section
  \ref{sec:theory}, in particular on the regularity of the input
  random field $a(\cdot,\omega)$ and of the functional $\mathcal{G}$,
  imply $\alpha = 2$. For non-convex domains $D$, this requires
  special sequences of meshes and an analysis in weighted spaces (see
  Proposition~\ref{prop:BilRegPrp} in Section~\ref{sec:Prel} which can
  also be used to bound the FE bias error). The value for $\alpha'$
  depends on the rate of decay of the KL eigenvalues. Under suitable
  regularity assumptions on the data, it was shown in
  \cite{Charrier:2013} that, for Gaussian fields with Mat\'ern
  covariance and smoothness parameter~$\nu$ (for a precise definition
  see Section 4), any $\alpha' < 2\nu/d$ can be chosen.
\item As shown in Section~\ref{sec:qmc}, the assumption that
  $\mathbb{E}_\bsDelta[\cQ_\ell (F_\ell - F_{\ell-1})] =
  \mathbb{E}(F_\ell - F_{\ell-1})$ is satisfied for our randomised QMC rules.
\item The main theoretical result of this paper, postponed  to Section
  \ref{sec:theory}, is to provide a proof of Assumption M2 for
  appropriate QMC rules. We will see there that this assumption can
  usually be satisfied for linear functionals, with $\beta = 2\alpha$
  and with $\lambda\in(1/2,1)$, for the case where $s_\ell =
  s_{\ell-1}$. The value of $\lambda$, for a sufficiently good choice
  of the QMC rules, depends on the parametric regularity of
  $a(\cdot,\omega)$. In particular, $\lambda$ can be chosen
  arbitrarily close to $1/2$ in the case of lognormal fields with
  Mat\'ern covariance and large enough smoothness parameter $\nu$ (as
  we discuss below). 
\item Finally, if we use an optimal deterministic PDE solver, such as multigrid,
Assumption M3 is also satisfied with $\gamma = d + \delta$, for some
$\delta > 0$, but typically $\delta \ll \alpha/\alpha'$
and thus $\gamma \le d + \alpha/\alpha'$, as in Corollary \ref{cor:mlqmc}.
\end{itemize}

In practice, however, for the choices of parameters in Corollary
\ref{cor:mlqmc} and assuming $\gamma \approx d$, there is typically a
critical tolerance $\varepsilon_*>0$ such that $\calC^{\text{perm}}_\ell
\le \calC^{\text{solve}}_\ell$ for all $\varepsilon\ge\varepsilon_*$. In
that situation, we can drop the exponent $-1/\alpha'$ in \eqref{eq:cost}
for $\varepsilon\ge\varepsilon_*$. Especially for $d>1$, most practical
choices for the
tolerance $\eps$ in applications lie above this critical tolerance
$\varepsilon_*>0$. We shall call the quantity obtained by dropping the
$-1/\alpha'$ exponent the \textit{pre-asymptotic cost}. Note however, that
as seen in \cite{gknsss:2012}, the QMC quadrature error also exhibits a
pre-asymptotic behaviour. To obtain sharp bounds, the $\lambda$ in the
pre-asymptotic cost should be replaced by the numerically observed
effective rates $1/\lambda_{\text{eff}} \le 1/\lambda$ of the employed QMC
rules. Note that the same is true for the single-level QMC estimator. There the
cost is $\mathcal{O}(\eps^{-2\lambda - 1/\alpha' - d/\alpha})$ as $\eps
\to 0$, and $\mathcal{O}(\eps^{-2\lambda_{\text{eff}} - d/\alpha})$ for
$\eps \ge \eps_*$.

The analysis in \cite{CGST:2011,TSGU:2012} of standard multilevel Monte
Carlo (MLMC) methods for the lognormal case does not rely on the use of
truncated KL-expansions. Isotropic input random fields $a(\cdot,\omega)$,
such as those studied in Section \ref{sec:numerical}, can be sampled in
$\mathcal{O}(h^{-d} \log(h^{-d}))$ operations via circulant embedding
techniques (see, e.g., \cite{Graham_etal:2011}). In that case,
$\calC^{\text{perm}}_\ell \lesssim \calC^{\text{solve}}_\ell$ and so, with
an optimal multigrid solver, the total cost on level $\ell$ is
$\mathcal{O}(N_\ell h_\ell^{-\gamma})$, for any $\gamma
> d$ (for more details see Section \ref{sec:numerical}). Hence, assuming
$\beta \not=\gamma$, the cost of an optimal implementation of MLMC grows with $\mathcal{O}(\eps^{-2 -
\max(0,(\gamma-\beta)/\alpha)})$ and $\gamma > d$ arbitrarily close to $d$.

Nevertheless, for sufficiently large values of $\alpha'$ --
typical for lognormal fields with Mat\'ern covariance and sufficiently large
smoothness parameter $\nu$ -- we see that the presently proposed
MLQMC estimator has significantly lower cost than, for example, MLMC
estimators when $\lambda < 1$. We will see in
Section~\ref{sec:numerical}  that this holds in
practice, even for values of the Mat\'ern parameter $\nu$ below the
minimum required in the present convergence analysis.
\subsection{Practical aspects}
\label{sec:practical}
The formula \eqref{eq:def_L} for $L$ requires knowledge of the constant
$c_1$. When the error estimates are sharp, this can be computed a
priori, as we do in our numerical experiments below.  However,
 the FE discretization error, and thus the value of $L$,
can also be estimated dynamically (i.e., without computing
additional samples) from the estimates $\cQ_\ell(F_\ell - F_{\ell-1})$, as for
standard MLMC (see
\cite{Giles:2008,CGST:2011}).

Like standard Monte Carlo estimators, randomised lattice rules also
come with a simple variance estimator,
namely the sample variance with respect to the random shifts, i.e.,
\begin{equation}
\label{varestimate}
\mathbb{V}_\bsDelta[\cQ_\ell(F_\ell - F_{\ell-1})] \, \approx \, \frac{1}{R_\ell(R_\ell-1)}
\sum_{k=1}^{R_\ell} \left[\cQ_{s_\ell,N_\ell}(F_\ell - F_{\ell-1};\bsDelta_{\ell,k})
-
\cQ_{s_\ell,N_\ell,R_\ell}(F_\ell - F_{\ell-1})\right]^2
\,.
\end{equation}
However, (on-the-fly) estimates for the rate of convergence $1/\lambda$ of
the lattice rule (or for its effective rate $1/\lambda_{\text{eff}}$) are
very unreliable, and thus the formulae \eqref{eq:def_N} and
\eqref{eq:def_N0} for the optimal values of $N_\ell$ and $N_0$ in the
proof of Corollary \ref{cor:mlqmc} are of limited practical use.

From a computational point of view, \emph{extensible} lattice sequences or
\emph{embedded} lattice rules are useful, as they allow the results
already calculated to be ``recycled'' when adaptively choosing the number
of samples, see e.g., \cite{HN03,CKN06,DPW08}. To explore this
``nestedness'' property in practice, it is most convenient for the number
of points $N_\ell$ to be only powers of 2 (since then we always obtain
complete lattice rules and do not need to be concerned about how the
individual lattice points are ordered). A simple and effective algorithm
that ensures this and does not require knowledge of $\lambda$ is presented
in \cite{GilesWaterhouse:2009}. For completeness, let us recall the
algorithm. To simplify notation, we define for $\ell =0,\ldots,L$,
$\cV_\ell := \mathbb{V}_\Delta(\cQ_\ell(F_\ell - F_{\ell-1}))$ and
$\cC_\ell := \mathrm{cost}(\cQ_\ell(F_\ell - F_{\ell-1}))$.

\begin{algorithm}\label{alg1}\em
Let $L=0$.
\begin{enumerate}
\item
Set $N_L=1$ and estimate $\cV_L$ using \eqref{varestimate}.
\vspace{-1ex}
\item While $\sum\limits_{\ell=0}^L \cV_\ell  > \varepsilon^2$, double
    $N_\ell$ on the level $\ell$ for which the ratio
    $\cV_\ell/\cC_\ell$ is largest. \vspace{-1ex}
\item
If the bias estimate is greater than
$\varepsilon$ or $L <2$, set $L \to L+1$ and go to Step 1.
\end{enumerate}
\end{algorithm}
Note that this is a greedy algorithm that strives to equilibrate the
\emph{profit}, that is, the ratio of variance and cost, across levels.
Thus, in the limit as $\eps \to 0$, the numbers of samples $N_\ell$ on the
levels will be such that $\cV_0/\cC_0 \approx \cV_1/\cC_1 \approx \cdots
\approx \cV_L/\cC_L$. To show that this choice of $N_\ell$ leads to the
same overall cost for MLQMC as the theoretical algorithm in the proof of
Corollary~\ref{cor:mlqmc}, let us assume that $\cV_\ell =  v_\ell\,
N_\ell^{-1/\lambda}$ (+ higher order terms), for some $\lambda > 0$ and
for some $0 < v_\ell \lesssim h_\ell^{\beta}$ that is independent of
$N_\ell$. This is a stronger assumption than M2, but asymptotically it is
satisfied for our QMC rules. Crucially, we do not require values of
$\lambda$, $v_\ell$ or $\beta$ in the algorithm.

We may also assume $\cC_\ell = \kappa_\ell N_\ell$
+ lower order terms,
where, at leading order, the ``cost-per-sample''
$\kappa_\ell \eqsim h_L^{-\alpha/\alpha'} \, h_\ell^{-d}$ is
independent of $N_\ell$.
With these assumptions, we may
set up a constrained optimisation problem, as
in the proof of Corollary~\ref{cor:mlqmc}, minimising the total cost
subject to the constraint in Step 2 of the algorithm on the total variance
being less than $\eps^2$. However, here we write more abstractly
\[
\tilde{g}(N_0,\ldots,N_L,\tilde{\mu}) \,:=\, \sum_{\ell=0}^L
  \cC_\ell + \tilde{\mu} \left( \sum_{\ell=0}^L \cV_\ell - \eps^2 \right).
\]
We ignore the higher and lower order terms in $\cV_\ell$ and in $\cC_\ell$,
respectively, treat
the $N_0, \ldots, N_L$ as continuous variables again and differentiate
$\tilde{g}$ with respect to $N_\ell$ and $\tilde{\mu}$ to get
\begin{align}
\label{opt1a}
\frac{\partial \tilde{g}}{\partial N_\ell}
  &\,=\, \kappa_\ell  -  \frac{\tilde{\mu}}{\lambda} v_\ell\, N_\ell^{-1/\lambda-1} \,=\,
    \left(\cC_\ell - \frac{\tilde{\mu}}{\lambda} \cV_\ell\right)N_\ell^{-1} \,=\,  0
  \qquad\mbox{for}\quad \ell=0,\ldots,L\,,
\\
\label{opt2a}
 \frac{\partial \tilde{g}}{\partial \tilde{\mu}} &\,=\, \sum_{\ell=0}^L
                                               \cV_\ell - \eps^2 \,= \,0\,.
\end{align}
It follows from \eqref{opt1a} that $\cV_\ell/\cC_\ell =
\lambda/\tilde{\mu}$, which is independent of $\ell$, and so the profit is
indeed equilibrated across the levels for the optimal values of $N_\ell$.
The fact that the asymptotic cost scales as in
\eqref{eq:cost} can then be deduced as in the
proof of Corollary~\ref{cor:mlqmc}, choosing
\[
N_\ell \, = \, \left\lceil N_0 \bigg(\frac{\kappa_0 \, v_\ell}{v_0 \,
    \kappa_\ell}\bigg)^{\lambda/(\lambda+1)} \right\rceil_2\,, \quad \text{where} \ \
\lceil x \rceil_2 := 2^{\lceil \log_2(x) \rceil}\,,
\]
that is, we round $N_\ell$ up to the nearest power of $2$. Substituting
this into \eqref{opt2a}, using \eqref{eq:def_L} and the assumptions on
$v_\ell$ and $\kappa_\ell$,
we can deduce that the expression for the optimal value for
$N_0$ is as in \eqref{eq:def_N0} (but rounded to the nearest
power of 2). The bound on the cost follows as before.

For standard multilevel Monte Carlo it is possible to compare this
algorithm with the original algorithm in \cite{Giles:2008} that adaptively
approximates the optimal choices of samples $N_\ell$, and we will see in
Section~\ref{sec:numerical} that Algorithm~\ref{alg1} achieves almost
the same cost effectiveness as the original algorithm, even for fairly
large $\eps$.

\section{Numerical results}
\label{sec:numerical}

For all our numerical experiments we assume that the log-permeability
$\log a(x,\omega)$ in \eqref{kle} is a mean-zero Gaussian
field with Mat\'ern  covariance, that is, $a_* \equiv 0$, $a_0 \equiv 1$ and
$(\mu_j,\xi_j)$ are the eigenpairs of the integral operator $\int_D
\rho_\nu(|\vx-\vx'|)\, v(\vx')\,\rd \vx'$, with
\begin{equation}
\label{matern}
  \rho_\nu (r) \,:=\, \sigma^2 \frac{2^{1-\nu}}{\Gamma(\nu)}
  \left(2\sqrt{\nu} \frac{r}{\lambda_\mathcal{C}}\right)^\nu
  K_\nu\left(2\sqrt{\nu} \frac{r}{\lambda_\mathcal{C}} \right)\;,
\end{equation}
where $\Gamma$ is the gamma function and $K_\nu$ is the modified
Bessel function of the second kind. The parameter $\nu$ is a smoothness
parameter, $\sigma^2$ is the variance and $\lambda_\mathcal{C}$ is the
correlation length scale. In practice, we will
always truncate the sum in \eqref{kle} after a finite number of
$s$ terms.

To compute the eigenpairs
$(\mu_j,\xi_j)$, $1\leq j \leq s$, we discretize the integral operator above
using the Nystr\"{o}m method based on Gauss-Legendre quadrature on $[0,1]^d$
and then solve the resulting algebraic eigenvalue problem.

The numerical results were obtained on a 2.4GHz Intel Core i7
processor in Matlab R2014b.

\subsection{Results in space dimension one}\label{exa:1D}

We first consider problem \eqref{log-diff-primal} in one
dimension on $D=(0,1)$ with homogeneous Dirichlet boundary conditions
$u(0,\omega) = u(1,\omega) = 0$ and source term $f\equiv 1$.
This problem is identical to the one studied in \cite[Sect.~6]{gknsss:2012}.
For the discretization of the associated variational formulation
on level $\ell=0,\dots,L$ we use
piecewise linear, continuous FEs on a uniform
simplicial mesh of width $h_\ell = h_02^{-\ell}$, where
$h_0 = 2^{-\ell_0}$ for some $\ell_0 \in \mathbb{N}$, such that
$M_\ell = 2^{\ell+\ell_0} -1$. We generate samples of $\log a$ (and thus of
$a$) at the midpoints of the intervals constituting the FE mesh
using the KL expansion of $\log a$ with $s$ terms, and approximate the
entries of the stiffness matrix via the midpoint quadrature rule.
The output quantity of interest is chosen to be $F := u(1/3,\omega)$, i.e., the
solution evaluated at $x=1/3$.

In order to have a nondimensional error measure for
$\cQ^\mathrm{ML}_L(F)$, our MLQMC estimator for $\mathbb{E}[F]$ with
randomly shifted lattice rules, we define what is usually called the {\em
relative standard error} in the statistical literature, that is
\begin{equation}
\label{def:relstderr}
e_{\text{rel}}\big(\cQ^\mathrm{ML}_L(F)\big) := \left|\frac{e\big(\cQ^\mathrm{ML}_L(F)\big)}{\mathbb{E}[F]}\right|
\end{equation}
We then study, for different tolerances $\eps>0$, the computational cost
to achieve a relative standard error
$e_{\text{rel}}\big(\cQ^\mathrm{ML}_L(F)\big) \le \eps$ and compare it to
the cost to achieve the same relative standard error with standard MLMC,
as well as with the single-level versions of both algorithms. In all the
QMC estimators, for simplicity we use $R=16$ random shifts and an
embedded lattice rule with generating vector taken from the file
\cite[{\tt lattice-39102-1024-1048576.3600.txt}]{frances_web}. (We
remark that there is no theoretical justification to use this lattice rule
for our problem here, however, numerical experiments from
\cite{gknsss:2012} indicated that such generic lattice rules do perform
just as well as those specifically tuned to the problem.)

We restrict ourselves to smoothness parameters $\nu \ge 1$, where the numerically observed
FE error is $\mathcal{O}(h^2)$ (independent of $\nu$).\footnote{Note that
theoretically the FE error for point evaluations in one space dimension is
$\mathcal{O}(h^2\log|h|)$ (cf.~\cite{aretha_paper}), but we do not observe the
log-factor in practice.} To estimate
the bias error on the finest level $L$, we then assume the following
upper bound (with uniform constants $C_{\text{FE}}$ and $C_{\text{trunc}}$):
\begin{equation}
\label{upper_bound}
|\mathbb{E}[F_{h,s} -  F^*]| \; \le \;
|\mathbb{E}[F_{h,s} -  F_{h^*,s}]| + |\mathbb{E}[F_{h^*,s} -
F^*]| \; \le \; C_{\text{FE}} \, h^2 \; + \; C_{\text{trunc}} \, s^{-2\nu/d}\,,
\end{equation}
where $F^*$ is a reference
solution computed with $h^*\ll h$ and $s^* \gg s$
 (see \cite[Sect.~2.4]{gknsss:2012} for a justification).
In Figure~\ref{Fig:bias}
\begin{figure}[t]
\centering
\includegraphics[width=0.495\textwidth]{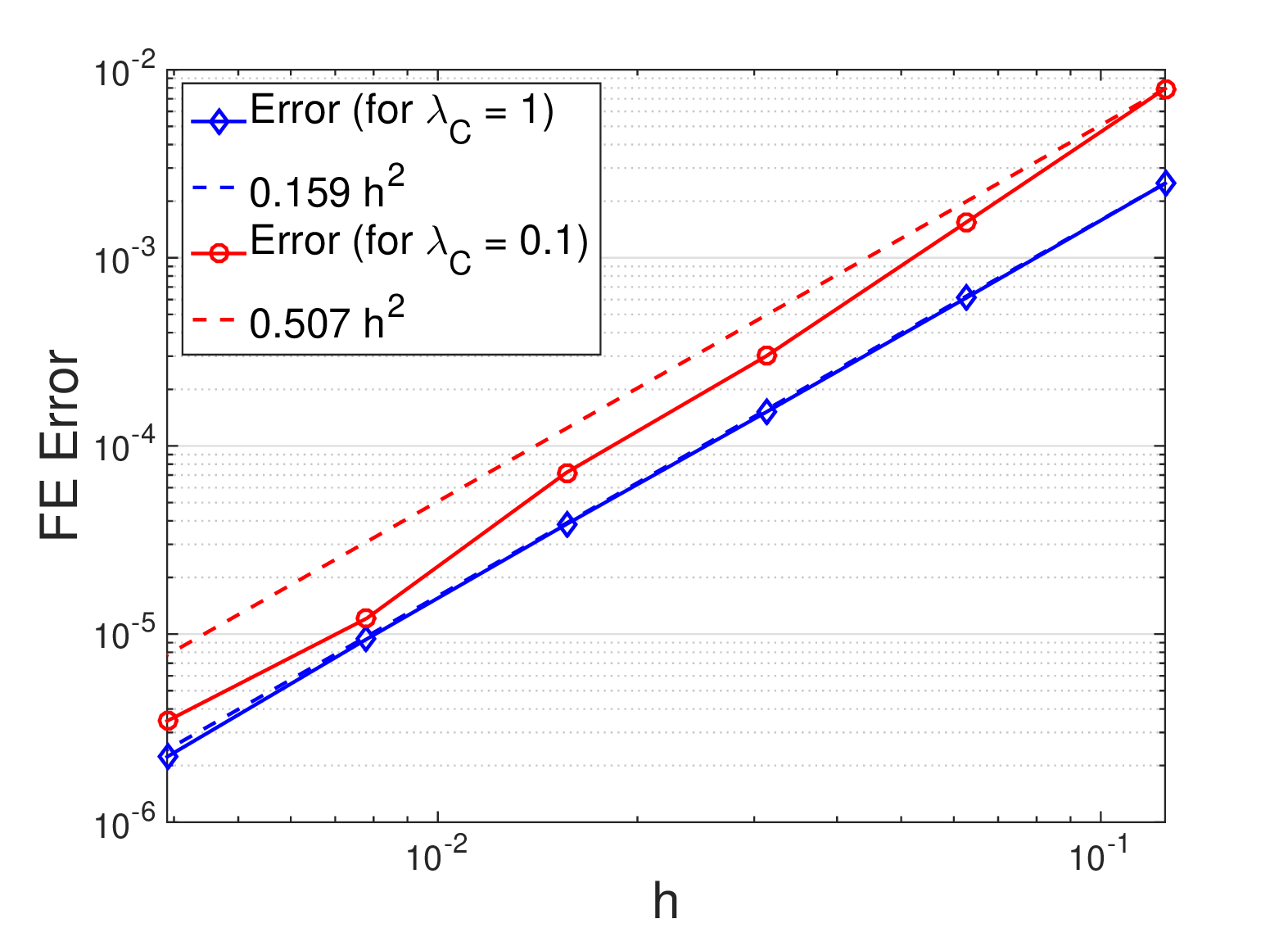}
\includegraphics[width=0.495\textwidth]{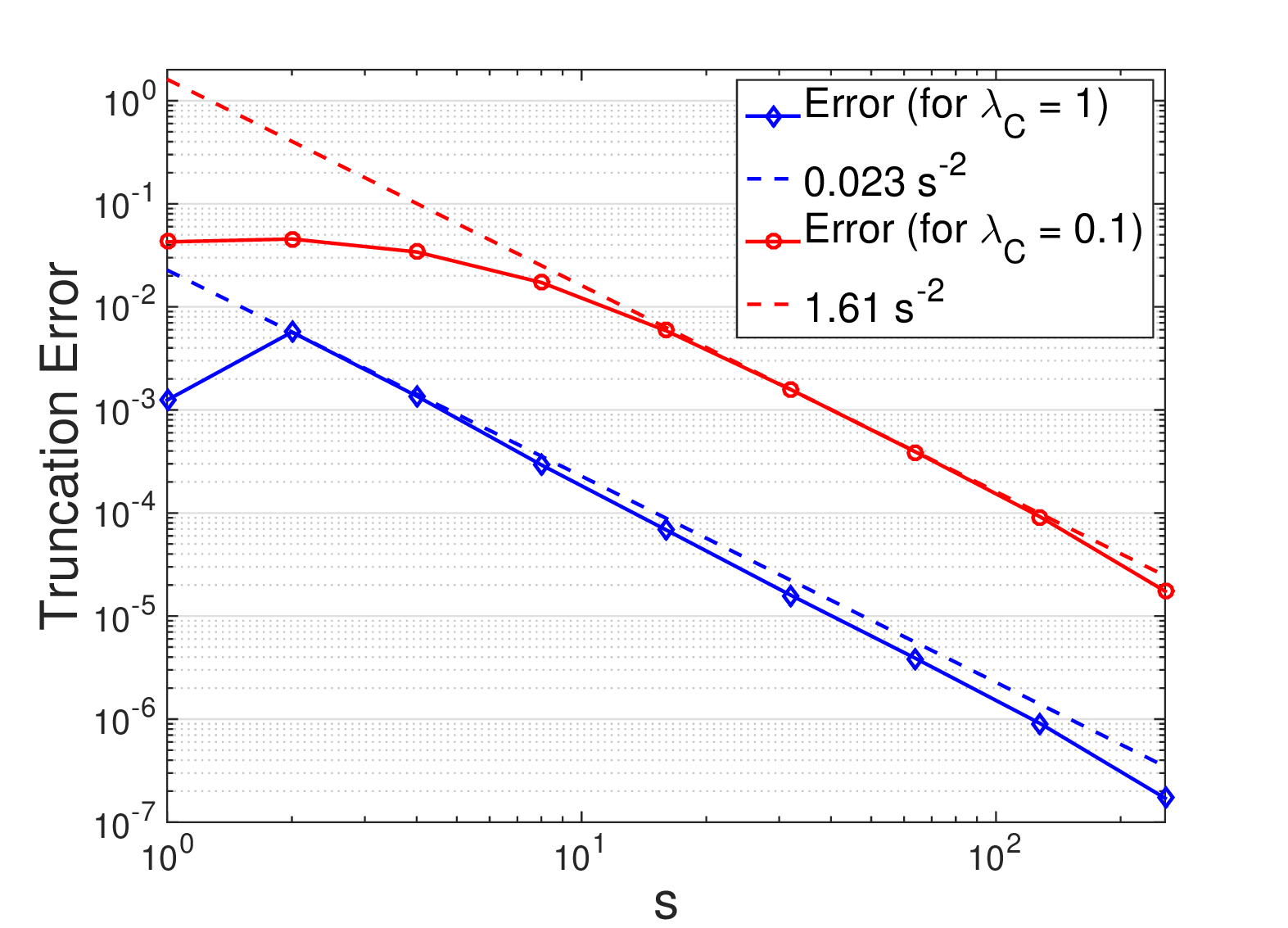}
\caption{\label{Fig:bias} Estimates of the FE bias error $|\mathbb{E}[F_{h,s^*} -
  F^*]|$ (solid lines, left) and of the truncation error $|\mathbb{E}[F_{h^*,s} -
  F^*]|$ (solid lines, right), as well as the bounds in
\eqref{upper_bound} for each case (dashed lines), for $\nu=1$ and
  $\sigma^2=1$.}
\end{figure}
 we plot estimates of $|\mathbb{E}[F_{h,s^*} -  F^*]|$
and of $|\mathbb{E}[F_{h^*,s} - F^*]|$, for the case of $d=1$, $\nu=1$,
$\sigma^2=1$ and for two different values of
$\lambda_\mathcal{C}$. We also show bounds over the plotted range of
$h$ and $s$, for each of the two terms
in \eqref{upper_bound} with the smallest possible values of
$C_{\text{FE}}$ and of $C_{\text{trunc}}$.
The expectations of these constants were estimated with $10^5$ MC samples and with $h^*=1/1024$ and
$s^*=500$. We see that the rates of $\alpha = 2 = \alpha'$
(for $\nu=1$) in \eqref{upper_bound} are sharp.

In our experiments, we then choose a particular sequence
$\eps_L := 2\sqrt{2}\,C_{\text{FE}} h_L^2$, where $L \ge 1$ and
$C_{\text{FE}}$ is the constant in \eqref{upper_bound} which we
estimate as shown above for each problem. We choose a corresponding
truncation dimension $s_L$ such that $C_\text{trunc} s_L^{-2\nu/d} \le
C_{\text{FE}} h_L^2$, which implies
\begin{equation}
\label{eq:sL}
s_L  \, := \, \left\lceil C_{\text{bal}} \, h_L^{-d/\nu} \right\rceil\, \quad \text{with} \ \ C_{\text{bal}} \, :=\, (C_\text{trunc}/C_{\text{FE}})^{d/(2\nu)} \,,
\end{equation}
and ensures that the total bound on the bias error in
\eqref{upper_bound} is less than $\eps_L/\sqrt{2}$. We then run each of the
estimators until the variance error is less than $\eps_L^2/2$, thus
ensuring a MSE (as defined in \eqref{eq:bias}) of less than $\eps_L^2$ and a relative standard error (as defined in \eqref{def:relstderr}) of less than $\eps_L/|\mathbb{E}[F]|$.

The numbers $N_\ell$ of lattice points for the MLQMC estimator on
each of the levels are chosen adaptively using the algorithm by Giles
and Waterhouse \cite{GilesWaterhouse:2009}, given in Algorithm~\ref{alg1} in
Section~\ref{sec:practical}. To estimate
the variance $\cV_\ell := \mathbb{V}_\Delta(\cQ_\ell(F_\ell -
F_{\ell-1}))$ on each level, we use \eqref{varestimate}. As in
  Corollary~\ref{cor:mlqmc}, we choose $s_\ell = s_L$ on all coarser levels. For the cost
on level $\ell$, we assume
\begin{equation}
\label{cost:QMC}
\cC_\ell := \mathrm{cost}(\cQ_\ell(F_\ell - F_{\ell-1})) \approx  (2s_L +
13) \, h_\ell^{-1} \, N_\ell \, R \, .
\end{equation}
This estimate is based on the fact (i) that the evaluation at the mid
points of the mesh intervals of the coefficient in \eqref{kle}, with $a_* \equiv
0$, $a_0 \equiv 1$ and with the sum truncated after $s_L$ terms,
requires about $\cC_\ell^{\text{perm}} = (2s_L + 1)\, h_\ell^{-1}$ operations; and (ii) that
there are direct solvers for diagonally dominant tridiagonal systems (e.g., the Thomas algorithm)
that achieve a complexity of $8$ operations per unknown, leading to the cost estimate
$\cC_\ell^{\text{solve}} = 8 (h_\ell^{-1} +
h_{\ell-1}^{-1}) = 12 h_\ell^{-1}$.

For the standard MLMC estimator we choose the same mesh and truncation parameters, $h_\ell$ and $s_\ell$, as for our new MLQMC estimator. The optimal numbers of
samples $N^{\text{MC}}_\ell$ are chosen according to the formula in the
original paper \cite{Giles:2008}. This requires variance estimates for the
differences $F_\ell - F_{\ell-1}$ on each of the levels, which are obtained
via the usual sample variance estimate with $10^2$ initial
samples, updating the estimates as $N^{\text{MC}}_\ell \to \infty$ on each level. The one-level variants are defined accordingly.

In Figures~\ref{fig1}--\ref{fig2},  we plot the cost to achieve a relative
standard error less than $\eps$ with MLQMC and  MLMC, as well as with the
one-level variants QMC and MC, for $\sigma^2=1$, $\nu=1, 2$, and
$\lambda_\cC = 1, 0.1$. Red lines with circles correspond to the MC-based
variants, while blue lines with diamonds correspond to the QMC-based
estimators. The points on each graph correspond to the choices $\ell_0 =
3$ and $L=1,\ldots,4$. The values of $s_L$ are chosen according to
\eqref{eq:sL} in each case. The exception is the hardest test case ($\nu=1$,
$\lambda_\cC=0.1$), where we used $L=2,\ldots,5$ and a variable
number $s_\ell = \lceil C_{\text{bal}} h_\ell^{-1} \rceil$ of KL terms
on level $\ell$ in MLQMC and in MLMC. The maximum number of
KL terms included in that case is $s_5 = 456$. In all test cases, we
consistently see substantial gains for the MLQMC estimator, with respect
to MLMC and QMC, even though the value of $\nu$ is substantially smaller
than our theory supports (see Remark~\ref{rem:support} ahead).
\begin{figure}[t]
\centering
\includegraphics[width=0.495\textwidth]{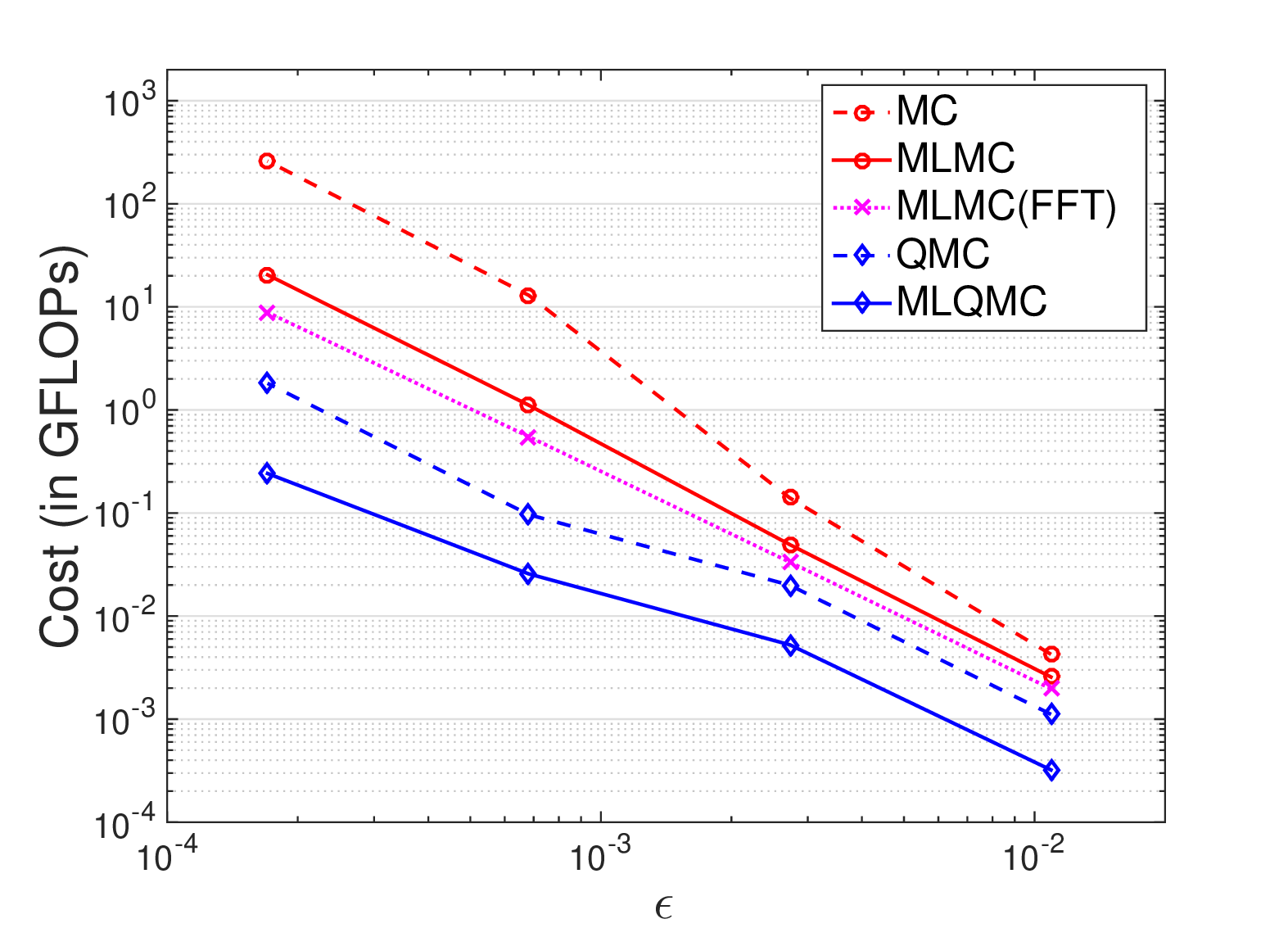}
\includegraphics[width=0.495\textwidth]{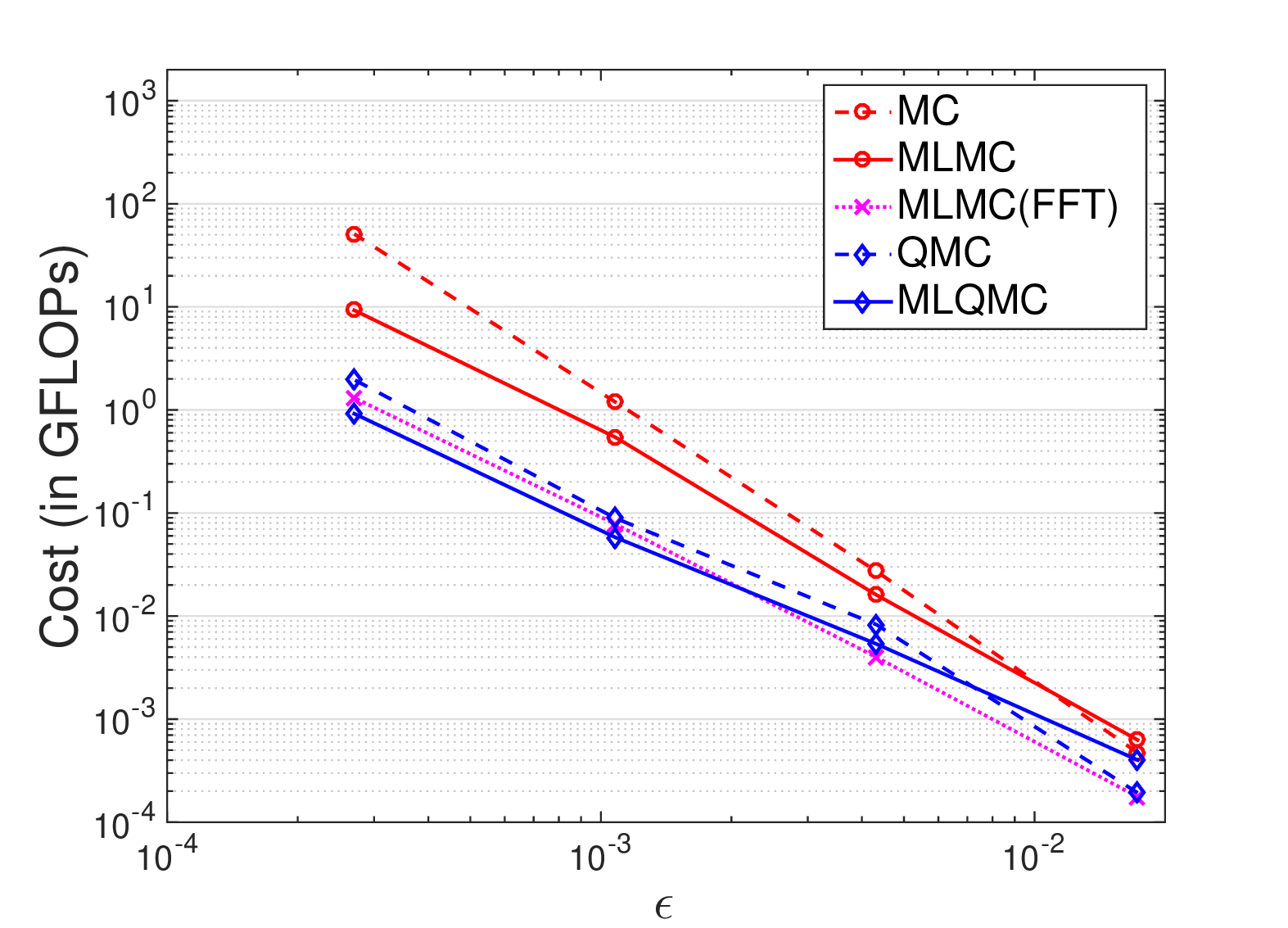}
\caption{Cost to obtain a relative standard error less than $\eps$ in the 1D example. The covariance
  parameters are $\nu=2$ and $\sigma^2=1$, as well as $\lambda_\cC=1.0$ (left) and
  $\lambda_\cC=0.1$ (right), respectively.
The estimates for $C_{\text{bal}}$ are 0.76 (left) and 2.38 (right), respectively.}
\label{fig1}
\end{figure}
\begin{figure}[t]
\centering
\includegraphics[width=0.495\textwidth]{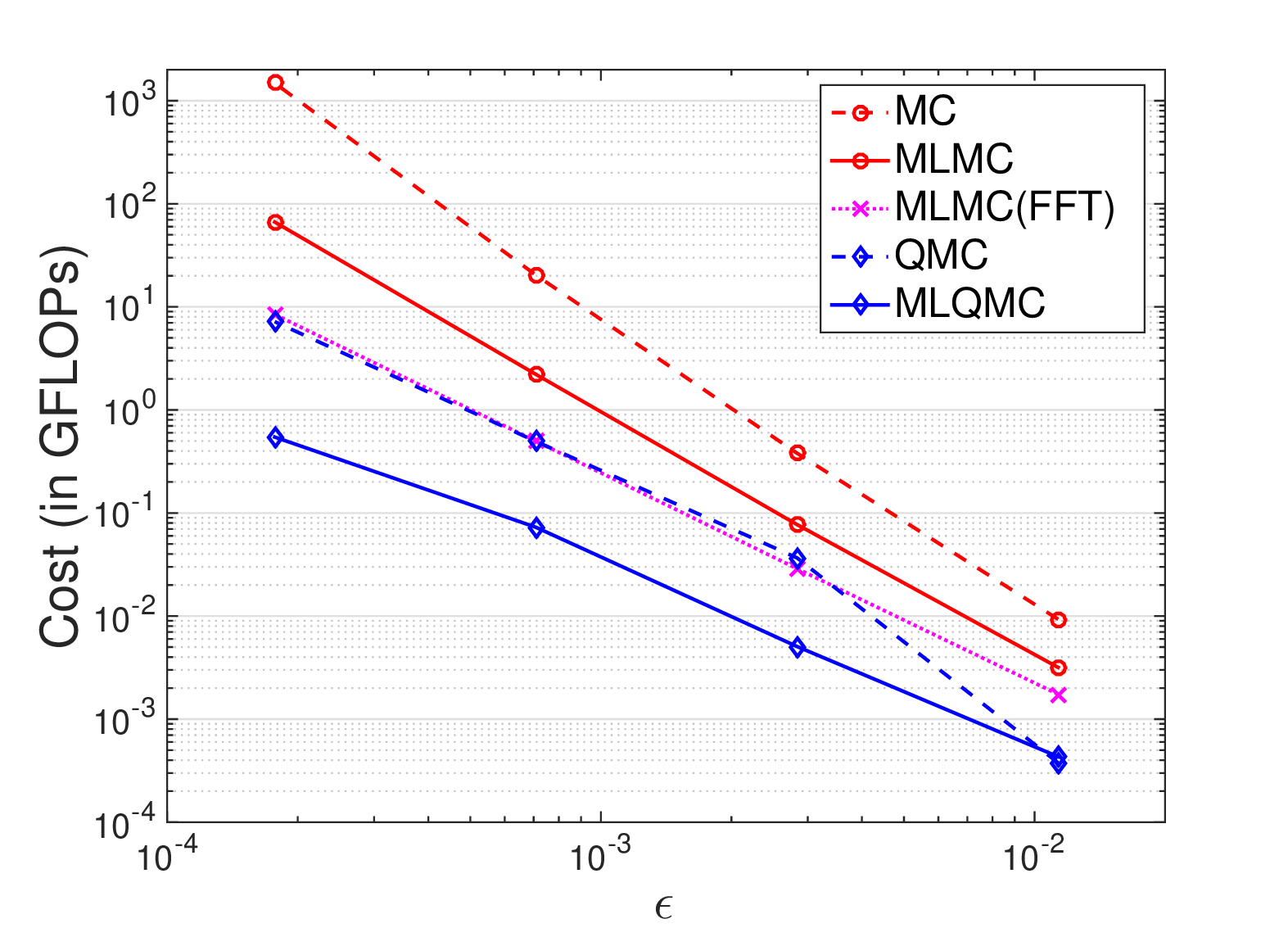}
\includegraphics[width=0.495\textwidth]{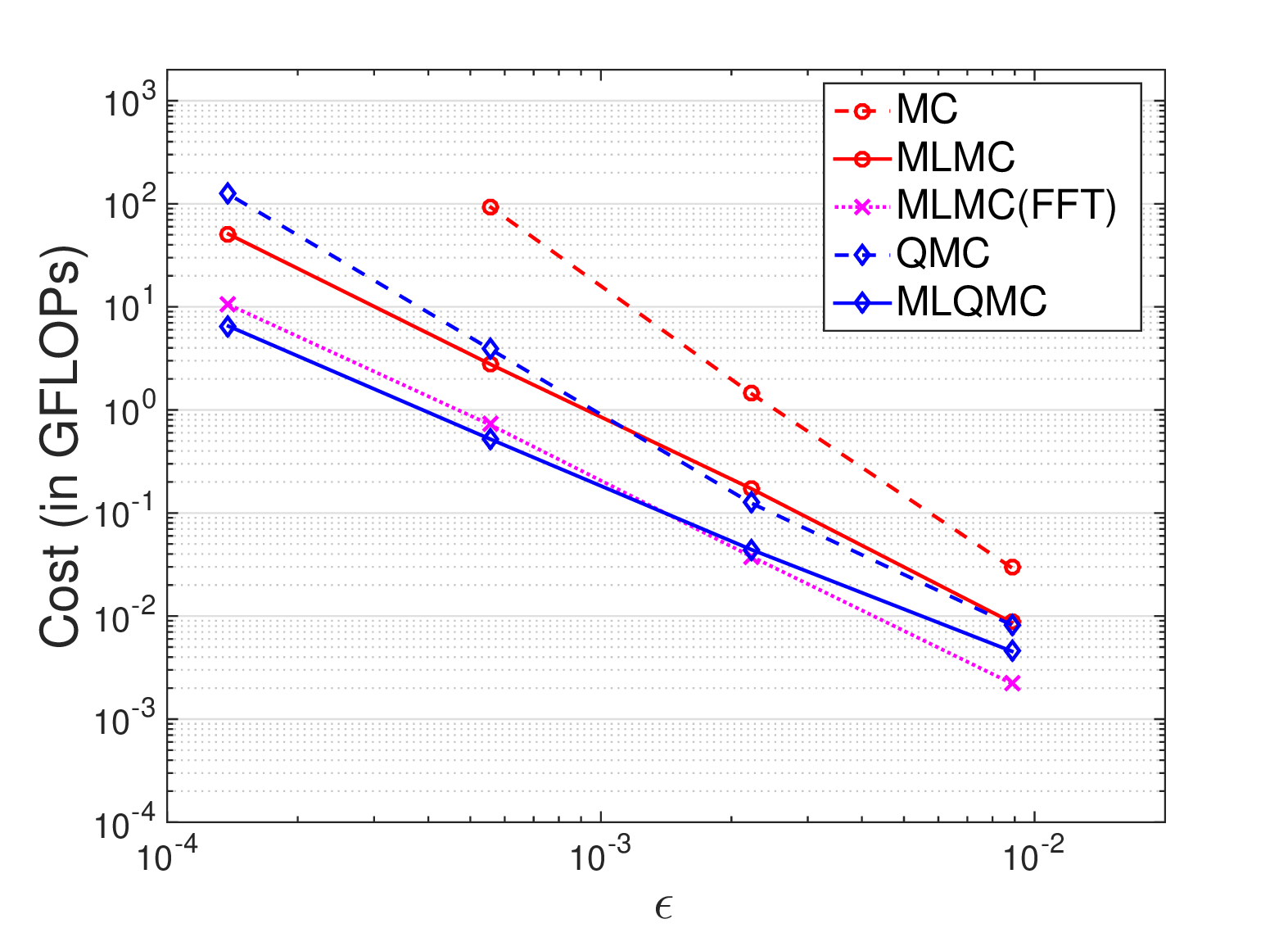}
\caption{Cost to obtain a relative standard error less than $\eps$ in the 1D example.
The covariance parameters are $\nu=1$ and $\sigma^2=1$, as well as
$\lambda_\cC=1.0$ (left) and $\lambda_\cC=0.1$ (right), respectively.
The estimates for $C_{\text{bal}}$ are 0.38 (left) and 1.78 (right), respectively.}
\label{fig2}
\end{figure}

For comparison, we show in
Figures~\ref{fig1}--\ref{fig2} also cost estimates
for MLMC using circulant embedding which makes use of the Fast
Fourier Transform (FFT)
(magenta line, labelled `MLMC(FFT)'). Circulant
embedding allows for efficient sampling at the quadrature points
from isotropic random fields, such as
the one studied here, without any truncation error (see
e.g.~\cite{Graham_etal:2011}) and with a cost independent of $s_L$. We assume
that for the MLMC estimator with circulant embedding, the cost on
level~$\ell$ is
\begin{equation}
\label{cost:MC}
\cC^{\text{FFT}}_\ell \approx \big(5(\ell+\ell_0) + 2\big) \, h_\ell^{-1} \,
N^{\text{MC}}_\ell \, < \, (68/9 (\ell+\ell_0) - 248/27 + 12) \, (\sqrt{2} h_\ell)^{-1} \,
N^{\text{MC}}_\ell\,.
\end{equation}
The factor $\sqrt{2}$ in front of $h_\ell$ appears because there is no
truncation error and thus the FE bias error can be increased by a
factor 2 to still achieve a MSE of $\eps_L^2$ for the MLMC estimator.
For the sampling of the coefficient we then assume the use of
circulant embedding without padding~\cite{Graham_etal:2011} -- which doubles
the number of unknowns in 1D -- and a split-radix FFT algorithm
that requires
$\frac{34}{9} \,n \log_2(n) - \frac{124}{27}\,n + \mathcal{O}\big(
\log_2(n) \big)$ operations for vectors of length $n$~\cite{JohnsonFrigo:2007}.
This is almost certainly underestimating the cost for circulant embedding,
but, as we can see in Figures~\ref{fig1}--\ref{fig2}, the cost is still higher than that of our MLQMC estimator asymptotically.

\begin{figure}[t]
\centering
\psfrag{l0}{\scriptsize ${\scriptstyle \bigcirc}$ $\ell = 0$}
\psfrag{l1}{\scriptsize $\Diamond$ $\ell = 1$}
\psfrag{l2}{\scriptsize ${\scriptstyle \nabla}$ $\ell = 2$}
\psfrag{l3}{\scriptsize ${\scriptstyle \triangle}$ $\ell = 3$}
\psfrag{l4}{\scriptsize $\Box$ $\ell = 4$}
\includegraphics[width=0.495\textwidth]{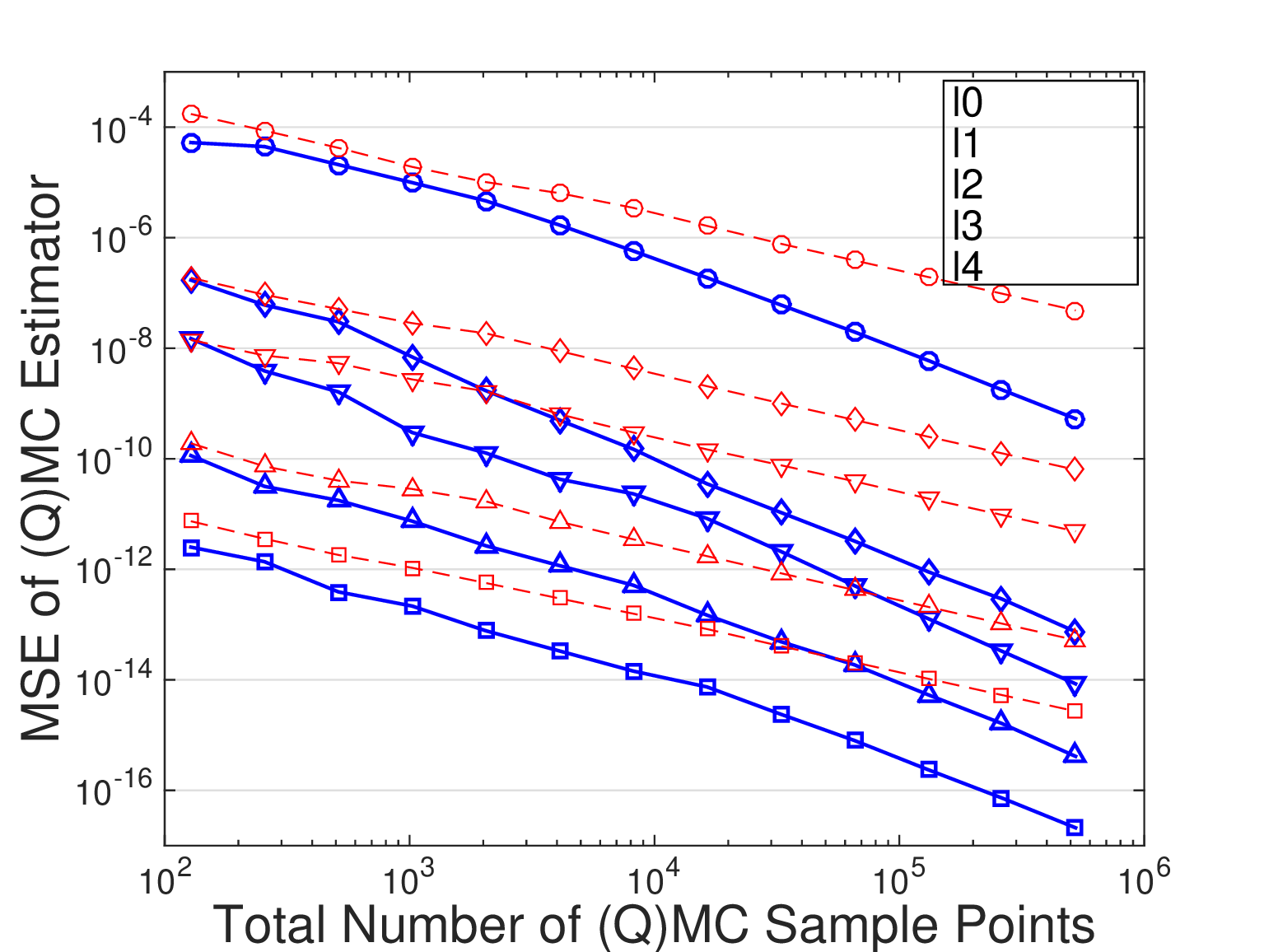}
\includegraphics[width=0.495\textwidth]{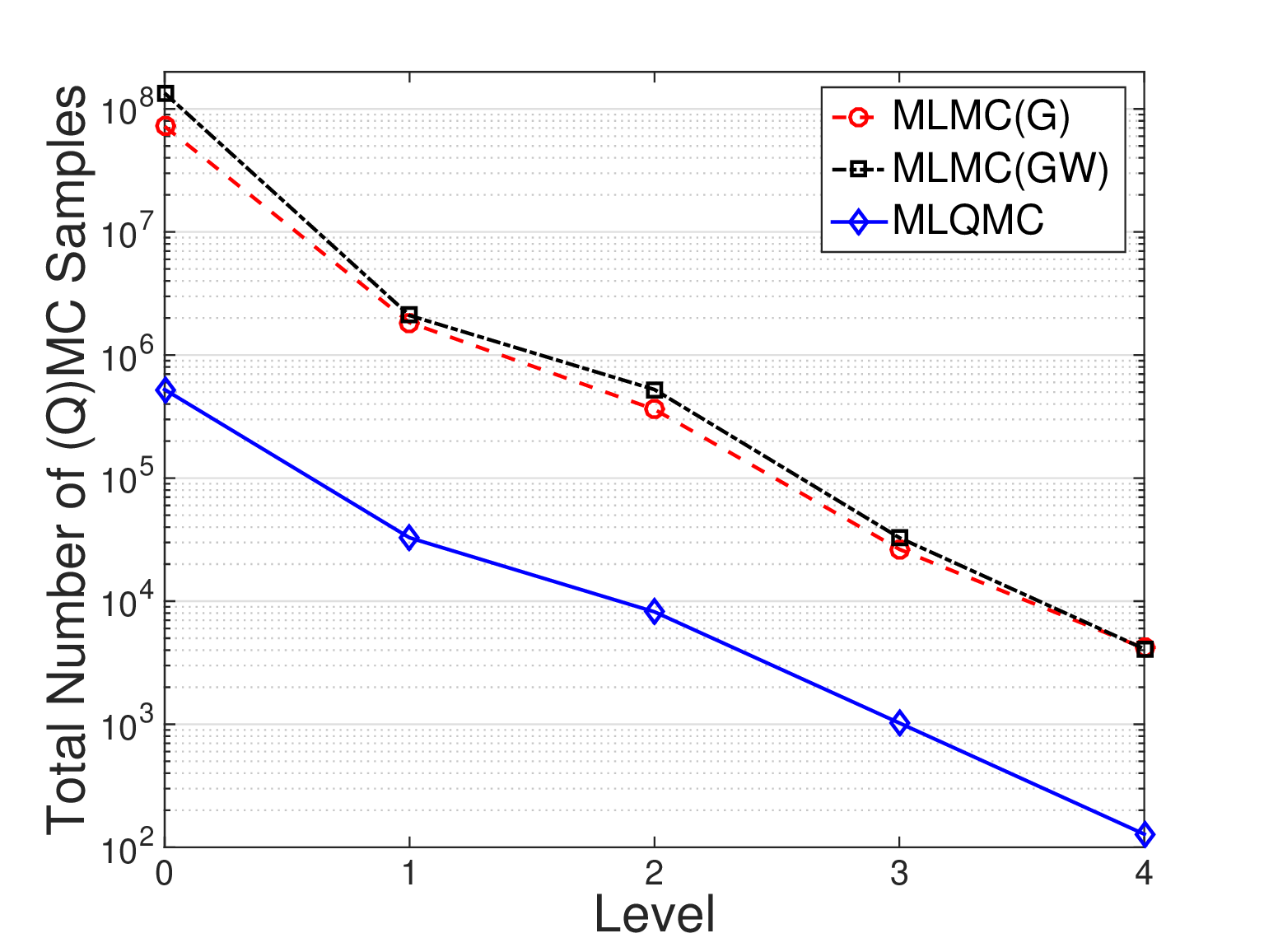}
\caption{Left: MSE of the QMC/MC estimators for $F_\ell -
  F_{\ell - 1}$ as functions of the total number of sample points, i.e.
$R N_\ell$ for QMC (solid blue lines) and $N_\ell^\text{MC}$ for MC (dashed red lines),
for $\nu=\sigma^2=\lambda_\cC=1$. Right: Total number of samples on each level
  to achieve a relative standard error less than $\eps = 1.8 \times 10^{-4}$ for the same example.}
\label{fig3}
\end{figure}
 In Figure~\ref{fig3}, we look at the particular case $\nu=\sigma^2=\lambda_\cC=1$, $h_L = 1/128$
and $s_L = 49$, and plot in the left figure the MSE of the QMC and the MC estimators for the expected values of the differences $F_\ell - F_{\ell-1}$ as the total number of sample points is increased
(i.e., $R N_\ell$ and $N_\ell^\text{MC}$, respectively). We clearly see the faster rate of convergence
with $N_\ell \to \infty$ for the QMC estimators, which is almost optimal
(i.e. the MSE is nearly $\mathcal{O}(N_\ell^{-2})$) even though $\nu=1$ is not
sufficiently big for our theory in Section~\ref{sec:theory} to apply and even though in the construction of the QMC rules we did not use the weights derived there. We also clearly see
the variance reduction from level to level (i.e., the offset between the lines), which does behave as theoretically shown in Section 5 (i.e. roughly like $\mathcal{O}(h_\ell^{4})$).

In Figure~\ref{fig3} (right) we plot for the same example the numbers
of sample points on each of the levels. For MLQMC they were produced
by Algorithm \ref{alg1}, showing $R N_\ell$, i.e. number of lattice points times
number of shifts. For standard MLMC we show two sequences of numbers:
those produced by the formula in the original MLMC paper \cite{Giles:2008},
labelled `MLMC(G)', and those produced by Algorithm \ref{alg1} with
standard MC estimators on each level, labelled `MLMC(GW)'.
We note that there are only very small differences in these final two
sequences, confirming our discussion in Section \ref{sec:practical}
that Algorithm~\ref{alg1} proposed in
\cite{GilesWaterhouse:2009} can be used instead of the original
algorithm to find the optimal sample distributions over the levels.
The behaviour is the same for all other parameter values.

\subsection{Results in space dimension two}\label{exa:2D}
We consider the problem \eqref{log-diff-primal}, \eqref{kle} with
  Mat\'ern covariance $\rho_\nu$ in \eqref{matern} on $D=(0,1)^2\subset
\mathbb{R}^2$. At first we use again homogeneous
Dirichlet conditions, i.e. $\Gamma = \Gamma_\cD$ and $u(\cdot,\omega)|_\Gamma \equiv
0$, and the source term $f \equiv 1$. The output quantity of interest is the average of the solution $u$
over the region $D^* = \big(\frac{3}{4},\frac{7}{8}\big)\times\big(\frac{7}{8},1\big)$, i.e.,
\[
F(\omega) := \frac{1}{|D^*|} \, \int_{D^*} u(\vx,\omega) \,
\text{d}\vx \, .
\]

We discretise the associated
variational formulation (spatially) using standard piecewise
linear, continuous FEs on a sequence of triangular meshes obtained by
taking a tensor product of each of the meshes in Section \ref{exa:1D} with
itself and by subdividing each of the squares of the resulting mesh
into two triangles, thus leading to $2^{2(\ell+\ell_0)+1}$ triangular elements of
size $h_\ell := h_0 2^{-\ell}$ with $h_0 := 2^{-\ell_0+1/2}$ and $M_\ell =
(2^{\ell+\ell_0}-1)^2$ degrees of freedom on level $\ell=0,\dots,L$.

The finite element bias error and the truncation error are estimated
as in 1D. The choice of domain and functional
guarantee that $u(\cdot,\omega) \in H^2(D)$ (almost surely) and the FE
and truncation errors converge as stated in \eqref{upper_bound}, for
$\nu >1$.
Then, the number of KL terms $s_L$ is again chosen according to
\eqref{eq:sL} and $s_\ell = s_L$ on the coarser levels of the
  multilevel methods in all cases. For the
average cost to compute one sample on each level, we use actual
CPU-timings here (instead of FLOP counts). These were obtained
using {\tt FreeFEM++} \cite{freefem} and the sparse direct solver {\tt
  UMFPACK} \cite{suitesparse}.
The measured times to evaluate the KL expansion (with $s$ terms) at
the quadrature points ($\mathcal{C}_\ell^{\text{perm}}$) and to assemble and solve the
sparse linear equation system ($\mathcal{C}_\ell^{\text{solve}}$) are
shown in Figure \ref{Fig:cost} (left) together with the total time to
compute one sample, for the case $\ell_0 = 3$, $\ell =4$ and $s = 50$.
Finite element methods for \eqref{log-diff-primal} in two space
dimensions allow, in the practical range of $M_\ell$ considered here,
for superior performance of sparse direct solvers as compared to, e.g.,
multigrid methods. Since we do not exploit the
uniform grid structure in {\tt FreeFEM++} the cost  in Figure
\ref{Fig:cost} (left) is actually
dominated by the FE system assembly,
which scales like $\mathcal{O}(h_\ell^{-2})$. We also note
that $\mathcal{C}_\ell^{\text{perm}} \ll
\mathcal{C}_\ell^{\text{solve}}$ for all our choices of $s_L$ below.
\begin{figure}[t]
\centering
\psfrag{l0}{\scriptsize ${\scriptstyle \bigcirc}$ $\ell = 0$}
\psfrag{l1}{\scriptsize $\Diamond$ $\ell = 1$}
\psfrag{l2}{\scriptsize ${\scriptstyle \nabla}$ $\ell = 2$}
\psfrag{l3}{\scriptsize ${\scriptstyle \triangle}$ $\ell = 3$}
\includegraphics[width=0.495\textwidth]{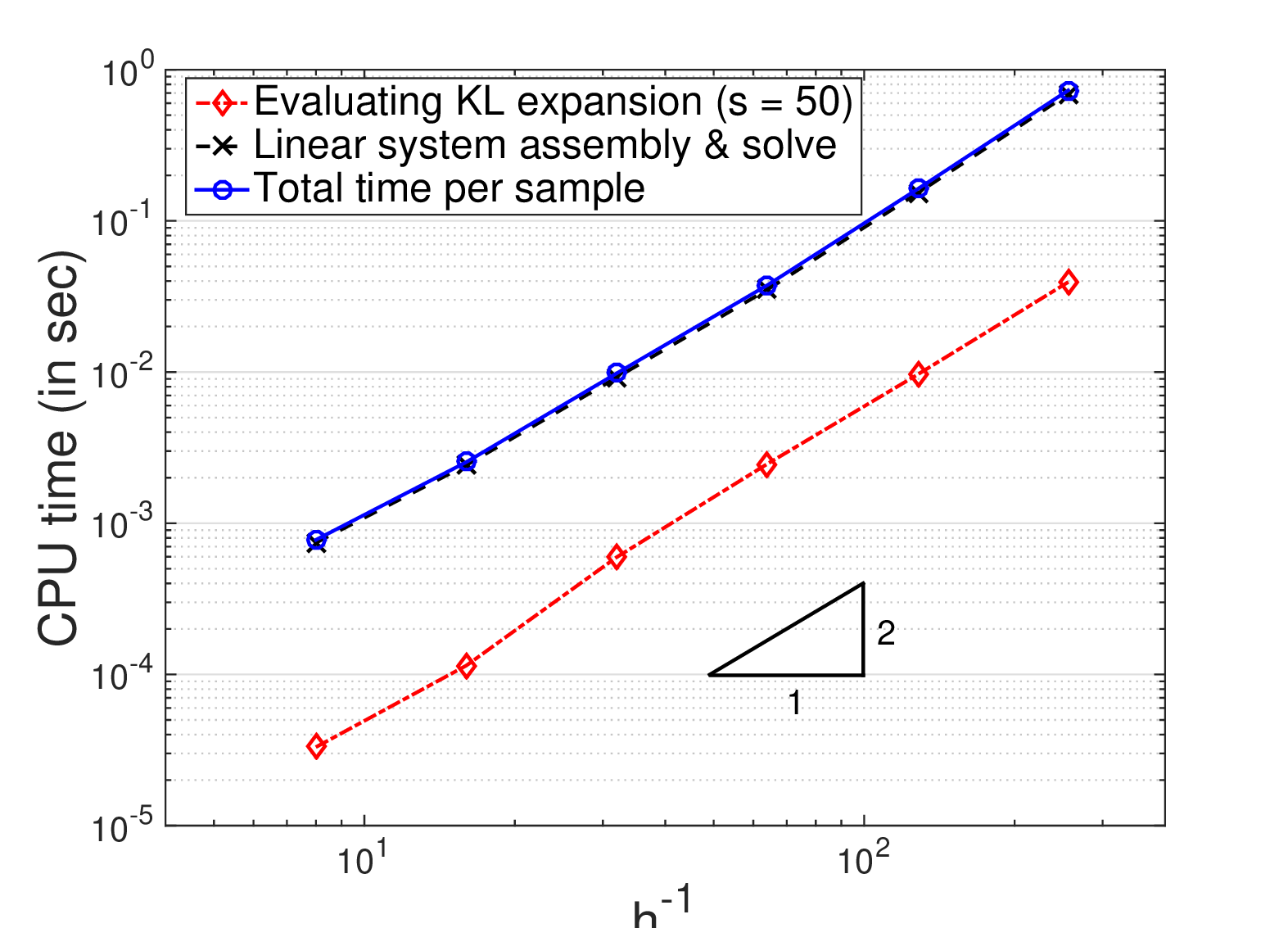}
\includegraphics[width=0.495\textwidth]{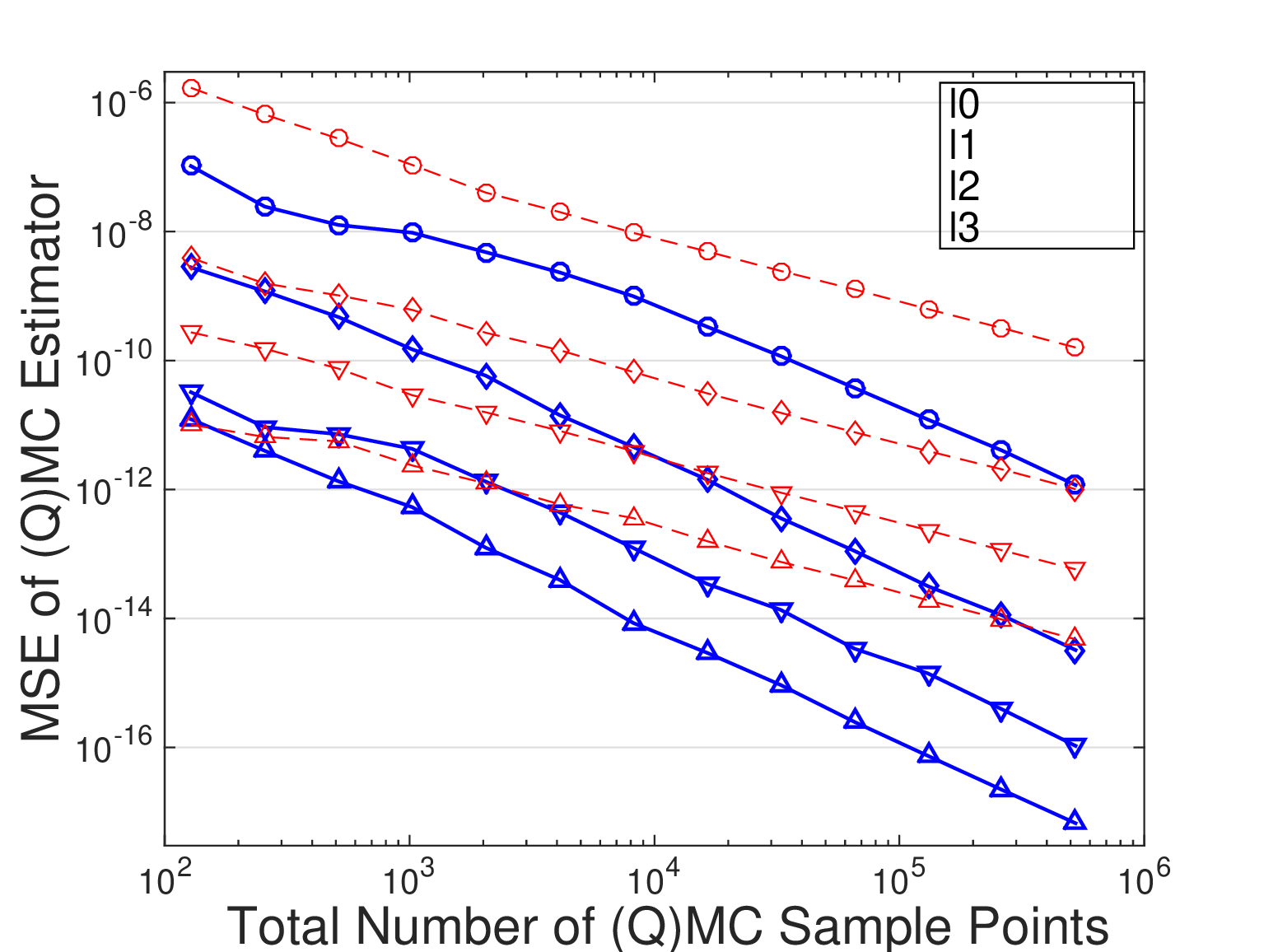}
\caption{\label{Fig:cost} Left: Measured CPU times to calculate
    one sample of $F_\ell$ on level $\ell$ for the 2D problem with
    $\ell_0 = 3$ and $s = 50$.
Right: MSE of the QMC/MC estimators for $\mathbb{E}[F_\ell-F_{\ell-1}]$,
as functions of the number of sample points, i.e. $R N_\ell$ for QMC (solid blue lines) and $N^{\text{MC}}_\ell$ for MC
(dashed red lines), for $\nu=1.5$, $\sigma^2=1$, $\lambda_\cC = 1$,
$\ell_0 = 3$, $L=3$ and $s_L = 27$.}
\end{figure}

In Figure~\ref{Fig:cost} (right), we plot the MSE of the QMC and of
the MC estimators for $\mathbb{E}[F_\ell - F_{\ell-1}]$ as a function of the total number
of sample points for the covariance parameters $\nu=1.5$,
$\sigma^2=1$, $\lambda_\cC = 1$, and for $\ell_0 = 3$, $L=3$ and $s_L = 27$. Again, we see the significantly faster and almost optimal convergence rate for the
QMC estimators as $N_\ell \to \infty$.

\begin{figure}[t]
\centering
\includegraphics[width=0.495\textwidth]{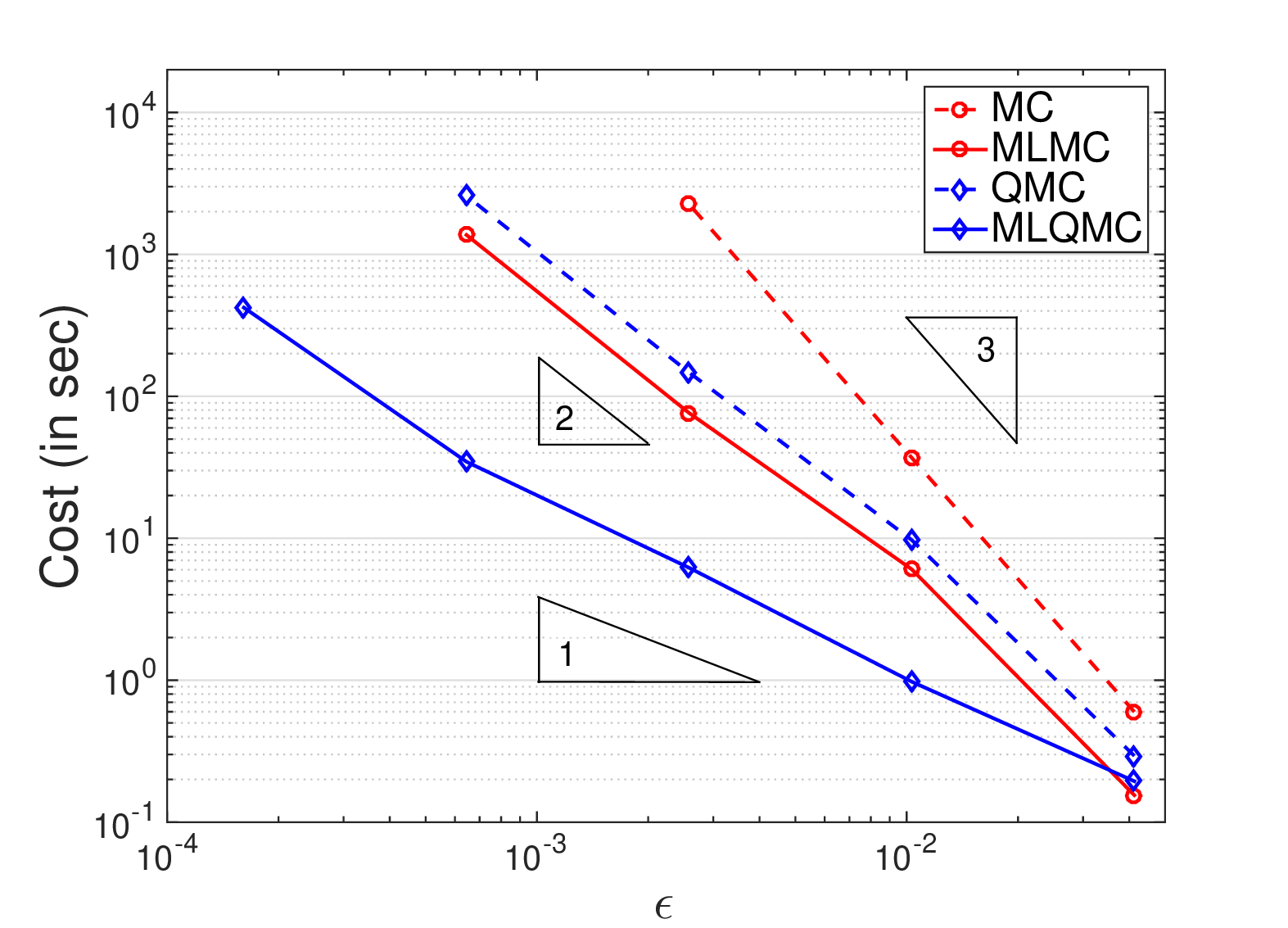}
\includegraphics[width=0.495\textwidth]{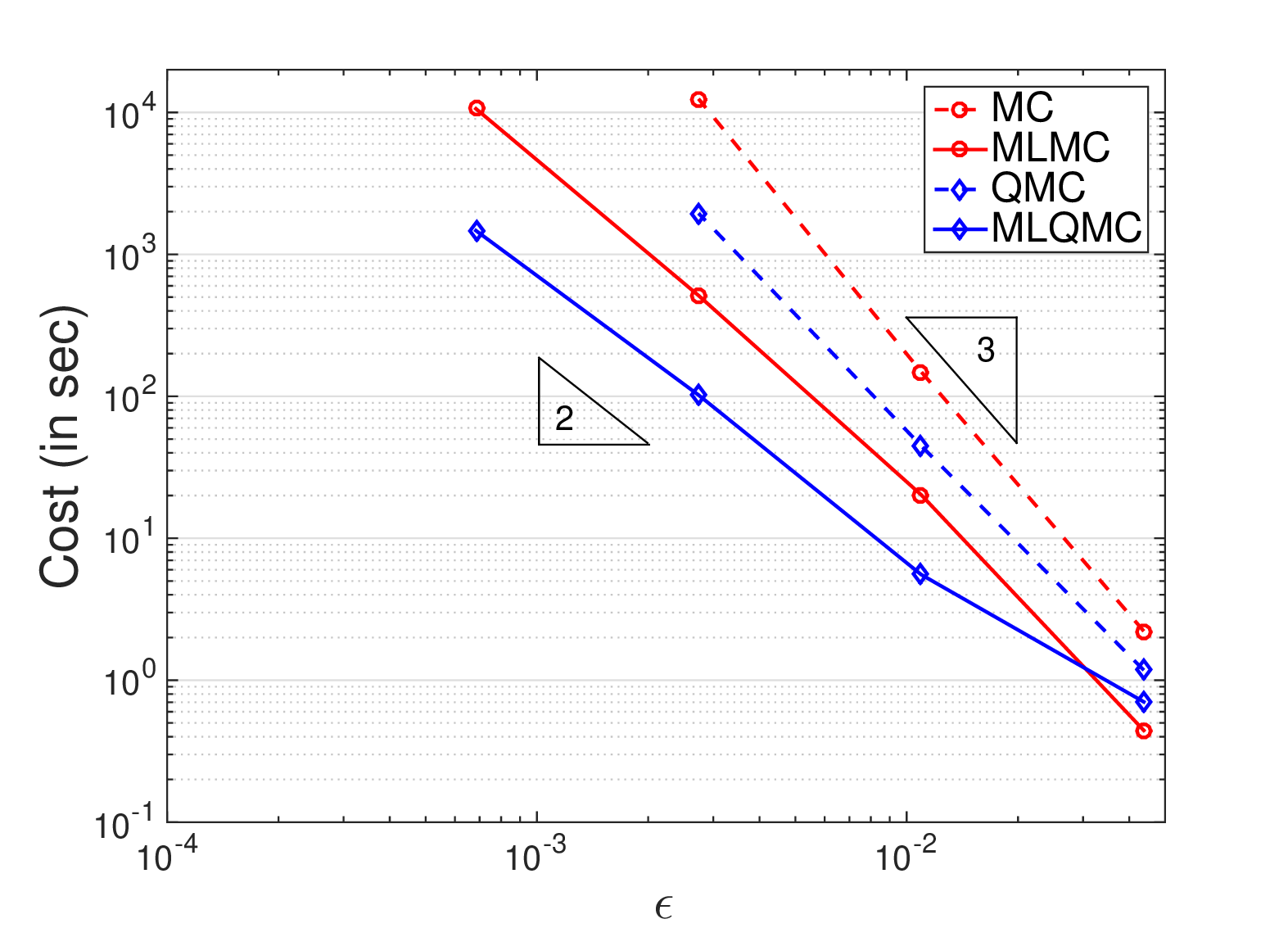}
\caption{\label{fig4}
 Cost to obtain a relative standard error less than $\eps$ in the
2D example with homogeneous
 Dirichlet conditions, for $\nu=2.5$, $\sigma^2=0.25$,
   $\lambda_\cC=1$ and $\ell_0 =3$ (left), as well as for $\nu=1.5$,
   $\sigma^2=1$, $\lambda_\cC=0.1$ and $\ell_0 = 4$  (right).
   The estimates for $C_{\text{bal}}$ are 0.55 (left) and 0.68 (right), leading to a maximum of $s_5=47$ and $s_4=1106$ KL terms on the finest mesh, respectively.}
\end{figure}
In Figure~\ref{fig4}, we plot again the cost to achieve a
relative standard error less than $\eps$ with all four estimators for
two sets of covariance parameters. The points on each of the graphs
correspond to the choices $L=1,\ldots,5$ with $\ell_0=3$ (left) and
$L=1,\ldots,4$ with $\ell_0=4$ (right).  We see similarly impressive gains
with respect to MLMC and QMC in two dimensions, but we also see more
clearly the influence of the smoothness parameter $\nu$.
For the test case in the left figure, the numerically observed growth of
the MLQMC cost is about $\mathcal{O}(\eps^{-1.25})$ over the range $L=1$
to $4$. For comparison,
the costs for MLMC and QMC both show growths of
$\mathcal{O}(\eps^{-2.2})$ over the same range, while MC shows the expected
$\mathcal{O}(\eps^{-3})$ growth.

As a final example, we consider the practically more interesting
case of a 2D ``flow cell'', that is, we solve the PDE
\eqref{log-diff-primal} in $D=(0,1)^2$ with mixed
Dirichlet--Neumann conditions. The horizontal boundaries are
assumed to be impermeable, that is, $(a \nabla u) \cdot \vec{n} = 0$ for $x_2 = 0$ and $x_2=1$.
Along the vertical boundaries
we specify Dirichlet boundary conditions and set $u \equiv 1$, for $x_1 = 0$, and $u
\equiv 0$, for $x_1=1$. We discretise this problem using the same
sequence of meshes as above. Due to the Neumann conditions on the
horizontal boundaries, the number of degrees of freedom in this problem
is $M_\ell = 2^{2(\ell+\ell_0)}-1$ on level $\ell=0,\dots,L$.

Here, the quantity of interest is the outflow through the right
vertical boundary, i.e.
\[
F(\omega)
:=
-\int_0^1 a(\vx,\omega)  \frac{\partial u(\vx,\omega)}{\partial x_1}
\bigg|_{x_1 = 1} \ \rd x_2\;.
\]
As an approximation of this functional we use
\[
F_{h,s}(\omega)
:= -\int_D a_s(\vx,\omega) \nabla u_{h,s}(\vx,\omega) \cdot \nabla \varphi(\vx)\ \rd\vx,
\]
where $\varphi$
denotes the FE function which is equal to one at all of the
vertices of the right vertical boundary and is equal to zero at all
other vertices (see \cite[section 3.4]{TSGU:2012} for details).

\begin{figure}[t]
\centering
\includegraphics[width=0.495\textwidth]{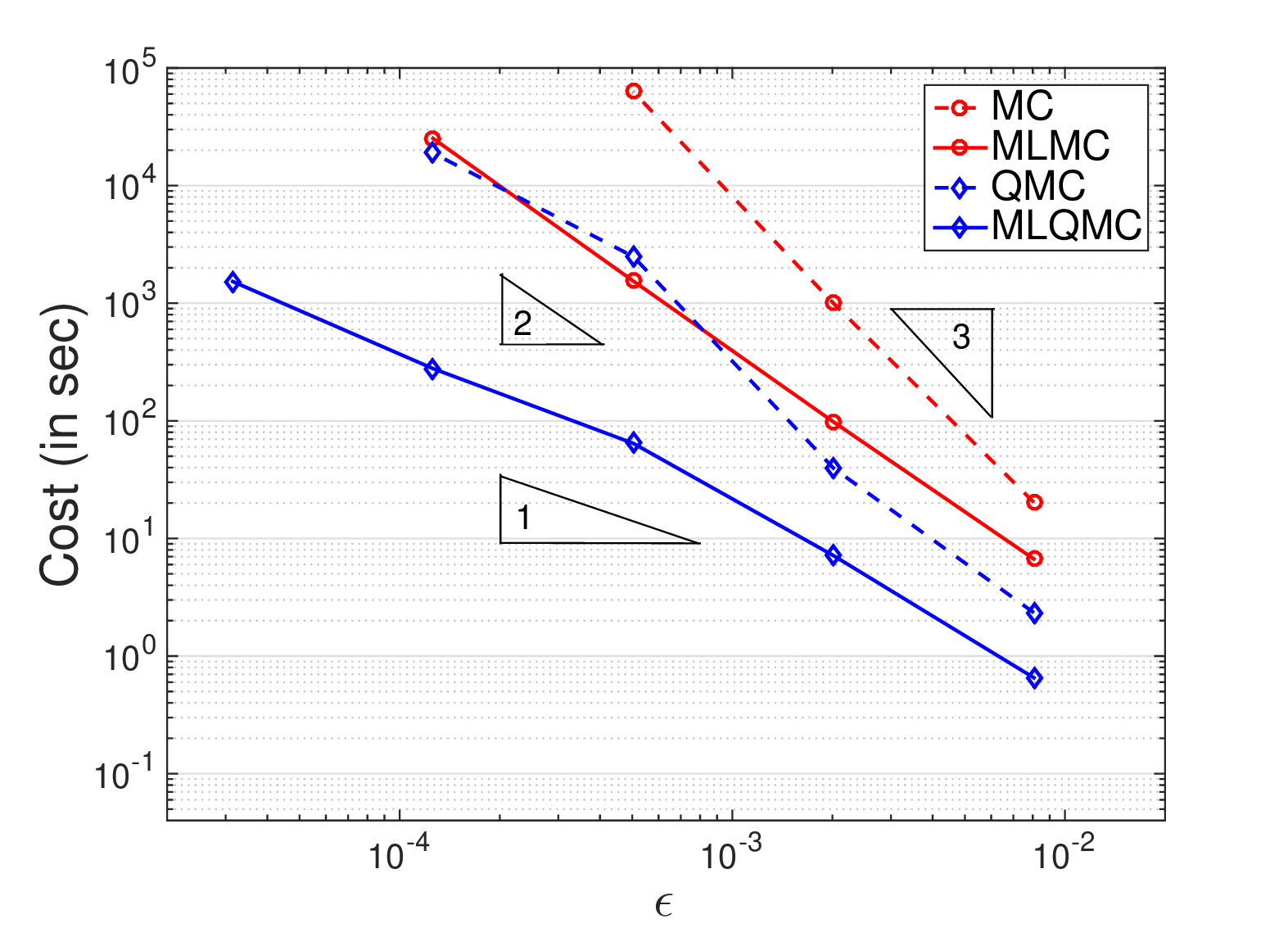}
\includegraphics[width=0.495\textwidth]{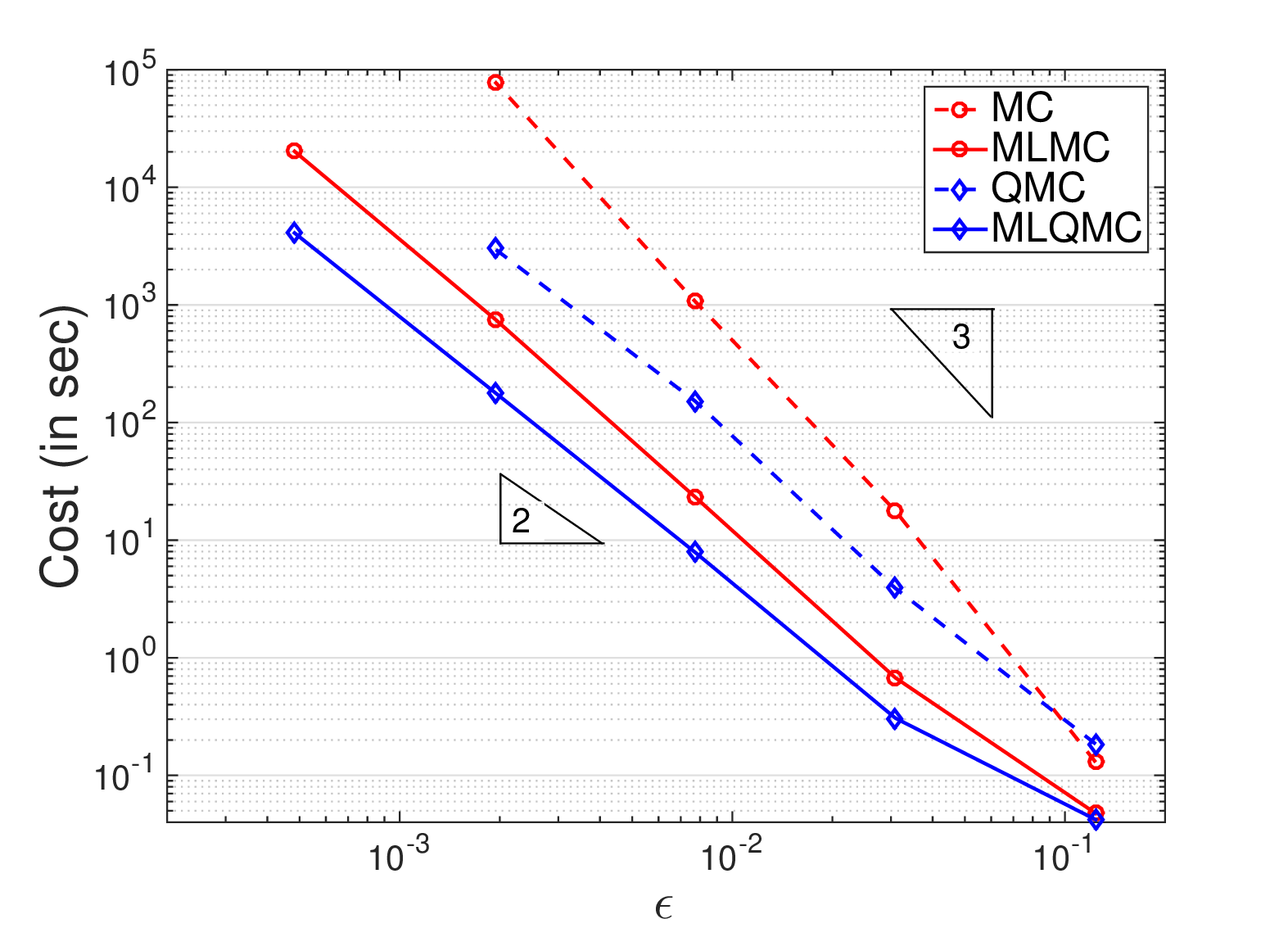}
\caption{\label{fig6}
Cost to obtain a relative standard error less than $\eps$ in the 2D flow cell example. The covariance
  parameters are $\nu=2.5$, $\sigma^2=1$, $\lambda_\cC = 1$ (left) and
$\nu=1$, $\sigma^2=3$, $\lambda_\cC = 0.3$ (right).
The estimates for $C_{\text{bal}}$ are 0.61 (left) and 0.0097 (right),
respectively. The problem on the right is significantly more
  challenging. Please note the different range for $\varepsilon$ in
the two figures.}
\end{figure}
The numerical results for this problem are shown
in Figure \ref{fig6}. In the left figure, we choose $\nu=2.5$,
$\sigma^2=1$, $\lambda_\cC =1$. In the right figure, we choose a set
of parameters closer to the ones used in actual subsurface flow studies, namely
$\nu=1$, $\sigma^2=3$ and $\lambda_\cC=0.3$. In both cases $\ell_0=2$. The points on the graphs correspond to the choices $L = 1,\ldots,5$ (left) and $L = 2,\ldots,5$ (right), respectively.
The gains are again of the same order as above in both cases. In the smoother test case (left), the growth of the MLQMC cost is as low as
$\mathcal{O}(\eps^{-1.15})$ between $L=3$ and $5$.

\section{Mathematical analysis and construction of suitable QMC rules}
\label{sec:theory}
In the remainder of the paper, we present sufficient conditions on the
data and on the FE spaces to verify Assumption M2 in the general MLQMC
convergence result in Theorem~\ref{thm:mlqmc}, as well as constructible
QMC rules that achieve this. We will start in Section~\ref{sec:Prel} by
addressing the spatial regularity and approximation orders for the FE
function $u_{h,s}(\cdot,\bsy)$ in \eqref{EG}, making explicit the
dependence on the parameter $\bsy$ in any constants that appear. Then we
turn to the key estimates required for the MLQMC theory: bounds on the
derivatives of the FE error with respect to the stochastic variables in
certain weighted function spaces $\calW_s$ which appear in the QMC
convergence theory (see \cite{gknsss:2012} and the references there)
with constants that are independent of the truncation dimension $s$.
These bounds correspond to ``mixed derivative bounds'', appearing also in
hyperbolic cross and other high-dimensional approximation methods
\cite{BG:2004,SG11_518,Harbrecht:2013}, in that they require joint
regularity of the random solution $u(\vx,\omega)$ with respect to the
spatial as well as with respect to the stochastic argument. These
estimates are proved in Section~\ref{sec:KeyRes}, and are used in
Section~\ref{sec:QMCConv} to establish the MLQMC convergence rate
estimates.

\subsection{Parametric formulation, spatial regularity and FE approximation}
\label{sec:Prel}

As in \cite{gknsss:2012}, we
assume that, for $d=2,\,3$, $D$ is a bounded,
Lipschitz polygonal/polyhedral domain.
For simplicity, we restrict ourselves to homogeneous Dirichlet data
$\phi_\cD = 0$ and to deterministic
Neumann data $\phi_\cN \in H^{1/2}(\Gamma_\cN)$
in \eqref{log-diff-primal}.
Then, the stochastic PDE \eqref{log-diff-primal}
is (upon a measure-zero modification of the
lognormal random field $a$ in \eqref{log-diff-primal}),
equivalent to the infinite-dimensional, parametric, deterministic PDE
\begin{equation} \label{eq:pde}
  - \nabla\cdot (a(\cdot,\bsy)\, \nabla u(\cdot,\bsy)) \,=\, f
 \quad\mbox{in}\quad D,
 \quad u|_{\Gamma_\cD}  \,=\, 0 ,
 \quad \vec{n}\cdot a(\cdot,\bsy) \nabla u(\cdot,\bsy) |_{\Gamma_\cN} \,=\, \phi_\cN
\,,
\end{equation}
with parametric, deterministic coefficient
\begin{equation} \label{eq:axy}
a(\vx,\bsy)
\,=\,
a_*(\vx) + a_0(\vx)\exp\bigg(\sum_{j\ge 1} \sqrt{\mu_j}\,\xi_j(\vx)\,y_j\bigg)\,,
\end{equation}
where $\vx\in D\subset \bbR^d$ and where the parameter sequence
$\bsy = (y_j)_{j\ge1} \in \mathbb{R}^\mathbb{N}$ is
distributed according to the product Gaussian measure
$\overline{\boldsymbol{\mu}}_G = \bigotimes_{j=1}^\infty \mathcal{N}(0,1)$.

If $\bsy$ belongs to the set
\begin{equation}\label{eq:DefUb}
 U_{\bsb} \,:=\,
 \bigg\{ \bsy\in\bbR^\bbN \;:\; \sum_{j\ge1} b_j\,|y_j| <\infty \bigg\} \subset\bbR^\bbN\,,
\end{equation}
where the sequence $\bsb = (b_j)_{j\ge 1}$ is defined by $b_j \,:=\,
\sqrt{\mu_j}\,\|\xi_j\|_{L^\infty(D)}$ and is assumed to be in
  $\ell^2(\bbN)$, then the equivalence of \eqref{eq:pde}--\eqref{eq:axy}
  and \eqref{log-diff-primal}--\eqref{kle}
holds up to
$\overline{\boldsymbol{\mu}}_G$-measure zero modifications of the input
random field. Due to the continuous
dependence on the input random field, the
parametric, deterministic coefficient $a(\cdot,\bsy)$ in \eqref{eq:axy}
and the parametric, deterministic solution $u(\cdot,\bsy)$ of \eqref{eq:pde}
will also differ only on a $\overline{\boldsymbol{\mu}}_G$-nullset.
If, moreover, $\bsb\in \ell^1(\bbN)$, then the series \eqref{eq:axy}
converges in
$L^\infty(D)$ for every $\bsy\in U_\bsb$, which we assume in what follows.

To simplify the presentation, we assume $a_* = 0$ and $|\Gamma_{\cD}|
> 0$. We need the weak form of \eqref{eq:pde} on the Hilbert
space $V = H^1_{\Gamma_\cD}(D) := \{ v \in H^1(D) : v|_{\Gamma_\cD} =
0\}$, with norm
\[
  \|v\|_{V} \,:=\, \|\nabla v\|_{L^2(D)}\,.
\]
As in \cite{gknsss:2012},
we define for $\bsy \in U_\bsb$ the
\emph{parametric, deterministic bilinear form} in $V$ by
\begin{equation}\label{eq:DefParBil}
  \scA(\bsy;w,v) \,:=\, \int_D a(\vx,\bsy)\, \nabla w(\vx)\cdot\nabla v(\vx)\,\rd\vx\,,
  \qquad\mbox{for all}\quad w,v\in V\,.
\end{equation}
We list properties of the parametric
bilinear form $ \scA(\bsy;\cdot,\cdot)$ and of the weak solution
  $u(\cdot,\bsy)$, as well as its FE approximation $u_h(\cdot,\bsy)$,
from \cite{gknsss:2012}. For a proof see~\cite[Thm.~13]{gknsss:2012}.
\begin{proposition}\label{prop:ParmBilPrp}
Assume that $\bsb\in \ell^1(\bbN)$.
\begin{enumerate}
\item[\textnormal{(a)}] The expressions
\begin{equation}\label{eq:hatchka}
\hat{a}(\bsy)   \,:=\, \max_{\vx\in \overline{D}} a(\vx,\bsy)
\quad\mbox{and}\quad
\check{a}(\bsy) \,:=\, \min_{\vx\in \overline{D}} a(\vx,\bsy)
\end{equation}
are well-defined, $\overline{\boldsymbol{\mu}}_G$-measurable mappings
from $U_\bsb$ to $\bbR$ which satisfy
\begin{equation}\label{eq:bounda}
 0< \check{a}(\bsy) \leq \hat{a}(\bsy) < \infty \qquad \text{for all} \quad
 \bsy\in U_\bsb\,.
\end{equation}
\item[\textnormal{(b)}] For every $\bsy\in U_\bsb$, the parametric
    bilinear form $ \scA(\bsy;\cdot,\cdot):V\times V\rightarrow \bbR$
    defined in \eqref{eq:DefParBil} is continuous and coercive in the
    following sense:
\begin{align}\label{eq:ParmBilCoer}
  \scA(\bsy;w,w) &\,\ge\, \check{a}(\bsy) \| w \|_V^2
  &&\text{for all}\quad w\in V\,, \quad \mbox{and} \\[1ex]
  \label{eq:ParmBilCont}
  \scA(\bsy;v,w) &\,\le\, \hat{a}(\bsy) \| v \|_V \| w \|_V
  &&\text{for all} \quad v, w\in V\,.
\end{align}
\item[\textnormal{(c)}] For every $f\in L^2(D)$ and every $\phi_\cN
    \in H^{1/2}(\Gamma_\cN)$ and with the
    \textnormal{(}$\bsy$-independent\textnormal{)} linear functional
$$
\scL(v) \,:=\, \int_D f(\vx)\,v(\vx) \,\rd\vx + \int_{\Gamma_\cN} \phi_\cN(\vx)\, v(\vx)\,\rd s\,,
$$
the parametric, deterministic variational problem, to find $u(\cdot,\bsy)\in V$ such that
\begin{equation} \label{eq:var}
  \scA(\bsy;u(\cdot,\bsy),v) \,=\,
  \scL(v)
  \qquad\mbox{for all}\quad
  v\in V\,,
\end{equation}
admits a unique solution $u(\cdot,\bsy)\in V$, for every $\bsy
  \in U_\bsb$.

\item[\textnormal{(d)}] This parametric solution $U_\bsb \ni \bsy
    \mapsto u(\cdot,\bsy)\in V$ is a strongly measurable mapping
    \textnormal{(}with respect to a suitable $\sigma$-algebra,
    cf.~\cite{gknsss:2012}\textnormal{)}
which satisfies the bound
\begin{equation}\label{eq:aprioriu}
\| u(\cdot,\bsy) \|_V
\,\lesssim\,
\frac{1}{\check{a}(\bsy)}
\left(\| f \|_{L^2(D)} + \| \phi_\cN \|_{H^{1/2}(\Gamma_\cN)}\right)
\qquad \text{for all}\quad \bsy \in U_\bsb\,,
\end{equation}
\textnormal{(}pointwise with respect to $\bsy$\textnormal{)}. The
implied constant depends on $D$, $\Gamma_\cD$ and $\Gamma_\cN$, but is
independent of $\bsy$. In particular, for any $s\in \bbN$, $u(\cdot,
(\bsy_{\{1:s\}};\boldsymbol{0})) \in V$ is well-defined,\footnote{As
    in \cite{gknsss:2012}, for any finite subset $\setu \subset
    \mathbb{N}$, we denote by $(\bsy_{\setu};\boldsymbol{0})$ the
    vector $\bsy \in U_\bsb$ with the constraint that $y_j = 0$ if $j
    \not\in \setu$, and we use the
  shorthand notation $\{1:s\}$ for $\{1,2,\ldots,s\}$.} measurable,
  and satisfies the above bounds uniformly with respect to $s$.

\item[\textnormal{(e)}] Restricting in \eqref{eq:var} to functions in
    the FE space $V_h\subset V$, there exists a unique, parametric FE
    solution $u_h(\cdot,\bsy)\in V_h$, for every $\bsy \in U_\bsb$ and
    $0<h<1$, that also satisfies the bound \eqref{eq:aprioriu}.
    Replacing, in addition, the coefficient $a(\vx,\bsy)$ in
    \eqref{eq:var} by the $s$-term truncated KL expansion
    $a^s(\vx,\bsy)$, the corresponding $s$-parametric FE solutions
    $u_{h,s}(\cdot,(\bsy_{\{1:s\}};\boldsymbol{0}))\in V_h$ are
    uniquely defined for {\em any} $\bsy_{\{1:s\}} \in \bbR^s$, and
    satisfy, for $(\bsy_{\{1:s\}};\boldsymbol{0})\in U_\bsb$, the
    bound \eqref{eq:aprioriu} uniformly with respect to $h$ and to
    $s$.
\end{enumerate}
\end{proposition}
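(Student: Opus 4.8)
The plan is to reduce everything to two pointwise facts about the coefficient \eqref{eq:axy}—that for each admissible parameter it is bounded above and bounded away from zero—and then feed these into the Lax--Milgram lemma and standard Galerkin arguments. First I would establish (a). For $\bsy\in U_\bsb$ the assumption $\bsb\in\ell^1(\bbN)$ together with $\sum_{j\ge1}b_j|y_j|<\infty$ shows that the tails $\sum_{j>J}\sqrt{\mu_j}\,\xi_j(\vx)\,y_j$ are bounded in $L^\infty(D)$ by $\sum_{j>J}b_j|y_j|\to0$, so the exponent series converges uniformly in $\vx$ and $a(\cdot,\bsy)\in L^\infty(D)$. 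Writing $B(\bsy):=\sum_{j\ge1}b_j|y_j|$, the exponent is bounded in modulus by $B(\bsy)$, and one reads off
\[
 0 < \Big(\min_{\vx\in\oD}a_0(\vx)\Big)\,e^{-B(\bsy)} \le \check{a}(\bsy)\le \hat{a}(\bsy)\le \Big(\max_{\vx\in\oD}a_0(\vx)\Big)\,e^{B(\bsy)} < \infty,
\]
which is \eqref{eq:bounda}. Measurability of $\hat a,\check a$ I would obtain by writing them as countable suprema/infima of the maps $\bsy\mapsto a(\vx,\bsy)$ over a dense sequence of points $\vx\in\oD$, each of which is measurable (being a pointwise limit of continuous functions of finitely many coordinates).

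Parts (b)--(d) are then essentially immediate consequences. Coercivity \eqref{eq:ParmBilCoer} follows from $\scA(\bsy;w,w)\ge\check a(\bsy)\,\|\nabla w\|_{L^2(D)}^2=\check a(\bsy)\,\|w\|_V^2$, and continuity \eqref{eq:ParmBilCont} from Cauchy--Schwarz together with $a(\cdot,\bsy)\le\hat a(\bsy)$. For (c) I would check that $\scL$ is bounded on $V$: using the Poincar\'e inequality (valid since $|\Gamma_\cD|>0$) and the trace theorem, $|\scL(v)|\lesssim(\|f\|_{L^2(D)}+\|\phi_\cN\|_{H^{1/2}(\Gamma_\cN)})\,\|v\|_V$, whence Lax--Milgram with the constants from (b) yields a unique $u(\cdot,\bsy)\in V$. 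Testing \eqref{eq:var} with $v=u(\cdot,\bsy)$ and invoking coercivity gives $\check a(\bsy)\,\|u(\cdot,\bsy)\|_V^2\le\scL(u(\cdot,\bsy))\lesssim(\cdots)\,\|u(\cdot,\bsy)\|_V$, which is exactly \eqref{eq:aprioriu}. The case $(\bsy_{\{1:s\}};\bszero)$ is merely the special case of a finitely supported parameter, which lies in $U_\bsb$.

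For (e) the finite-dimensional problem inherits coercivity and continuity on $V_h\subset V$ with the same constants, so Lax--Milgram applied in $V_h$ gives a unique $u_h(\cdot,\bsy)$ satisfying \eqref{eq:aprioriu} by the identical test-function argument. The one genuinely new point is the uniformity in $s$ for the truncated coefficient $a^s$: since the truncated exponent is a finite sum, $a^s(\cdot,\bsy)\in L^\infty(D)$ is strictly positive for \emph{every} $\bsy_{\{1:s\}}\in\R^s$, so $u_{h,s}$ is defined without requiring $\bsy\in U_\bsb$; and when $(\bsy_{\{1:s\}};\bszero)\in U_\bsb$ the crucial observation is
\[
 \Big(\min_{\vx\in\oD}a_0\Big)\,e^{-\sum_{j\ge1}b_j|y_j|}\ \le\ \min_{\vx\in\oD}a^s(\vx,\bsy),
\]
so the lower bound on $\check a$ controls the truncated coefficient uniformly in $s$ (and trivially in $h$), yielding the claimed $s$- and $h$-uniform version of \eqref{eq:aprioriu}.

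The analytic estimates above are standard; the part needing the most care is measurability, and in particular the strong (Bochner) measurability of the $V$-valued map $\bsy\mapsto u(\cdot,\bsy)$ in (d). Here I would argue that the solution $u^{(s)}(\cdot,\bsy)$ of \eqref{eq:var} with $a$ replaced by its $s$-term truncation depends continuously on $(y_1,\dots,y_s)$, hence (using separability of $V$) is a strongly measurable $V$-valued map; continuous dependence on the coefficient, which converges in $L^\infty(D)$ as $s\to\infty$ for every $\bsy\in U_\bsb$, then gives $u^{(s)}(\cdot,\bsy)\to u(\cdot,\bsy)$ in $V$ pointwise in $\bsy$, and a pointwise limit of strongly measurable maps is strongly measurable. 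This measurability bookkeeping, rather than any single hard estimate, is the main obstacle.
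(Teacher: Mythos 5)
Your proposal is correct and follows essentially the same route as the proof the paper relies on (the paper proves nothing here itself, but delegates to \cite[Thm.~13]{gknsss:2012}): uniform convergence of the exponent series on $U_\bsb$ giving the pointwise bounds $0<\check{a}(\bsy)\le\hat{a}(\bsy)<\infty$, Lax--Milgram in $V$ and in $V_h$ with the energy-norm test-function argument for \eqref{eq:aprioriu}, and strong measurability via continuity of the finite-dimensional truncated solutions together with their pointwise convergence in $V$. The only cosmetic difference is in part (e), where the identity $a^s(\cdot,\bsy_{\{1:s\}})=a(\cdot,(\bsy_{\{1:s\}};\bszero))$ (truncation equals anchoring) immediately yields \eqref{eq:aprioriu} with exactly the constant $1/\check{a}((\bsy_{\{1:s\}};\bszero))$, whereas your detour through the lower bound $\big(\min_{\vx\in\oD}a_0\big)e^{-B(\bsy)}\le\check{a}$ delivers a marginally weaker, though still $h$- and $s$-uniform, constant.
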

For the derivation of FE convergence rate bounds,
we require additional spatial regularity: we assume in \eqref{eq:axy}
\begin{equation}
\label{ass:regular}
a_* \equiv 0\,, \quad a_0 \in  W^{1,\infty}(D) \quad \text{and} \quad \xi_j \in
W^{1,\infty}(D)\,.
\end{equation}
With \eqref{ass:regular}, we may define the sequence
\begin{equation} \label{eq:barbj}
\overline{b}_j
\,:=\, \sqrt{\mu_j}\, \max(\|\xi_j\|_{L^\infty(D)},\||\nabla\xi_j|\|_{L^\infty(D)})
\,,
\quad j=1,2,...
\,.
\end{equation}
Evidently, $\overline{b}_j \geq b_j$ so that
$U_{\overline{\bsb}} \subset U_{\bsb} \subset \bbR^\bbN$.
We assume in what follows that
\begin{equation}\label{eq:barbjl1}
\overline{\bsb} = (\overline{b}_j )_{j\geq 1} \in \ell^1(\bbN) \;.
\end{equation}
These conditions are satisfied under the provision of appropriate
regularity of the covariance function of the Gaussian random field $\log(a
- a_*)$ in \eqref{log-diff-primal} (we refer to the discussion in
\cite[Rem.~4]{gknsss:2012}). Also, for any $\overline{\bsb} \in
\ell^1(\bbN) \subset \ell^2(\bbN)$ we have
$\overline{\boldsymbol{\mu}}_G( U_{\overline{\bsb}} ) = 1$.
\begin{proposition}\label{prop:BilRegPrp}
Let us assume \eqref{ass:regular} and \eqref{eq:barbjl1}, and suppose we
are given deterministic functions $f\in L^2(D)$ and $\phi_\cN \in
H^{1/2}(\Gamma_\cN)$. Then the following results hold.
\begin{enumerate}
\item[\textnormal{(a)}] For every $\bsy\in U_{\overline{\bsb}}$\,, the
    series \eqref{eq:axy} converges in $W^{1,\infty}(D)$ and
\begin{equation}\label{eq:aLip}
a(\cdot,\bsy) \in W^{1,\infty}(D)\,.
\end{equation}
\item[\textnormal{(b)}] The parametric solution map $\bsy\mapsto
    u(\cdot,\bsy)$ is strongly
    $\overline{\boldsymbol{\mu}}_G$--measurable as a map from
    $U_{\overline{\bsb}}$ to the space
\begin{equation}\label{eq:defW}
 W \,:=\, \{ v\in V: \Delta u\in L^2(D) \}\,,
\end{equation}
and we have the a priori estimate
\begin{equation} \label{eq:Est1}
 \|\Delta u(\cdot,\bsy)\|_{L^2(D)} \,\lesssim\,  T_1(\bsy)
 \left(\|f\|_{L^2(D)} + \| \phi_\cN \|_{H^{1/2}(\Gamma_\cN)} \right),
\end{equation}
where
\begin{equation}\label{eq:defT1}
 T_1(\bsy)
 \,:=\, \frac{1}{ \check{a}(\bsy) }
 + \frac{\|\nabla a(\cdot,\bsy)\|_{L^\infty(D)}}{\check{a}(\bsy)^2}
 \,<\, \infty
 \qquad \mbox{for all} \quad \bsy \in U_{\overline{\bsb}}
\,.
\end{equation}
\item[\textnormal{(c)}] There exists a sequence $\{ V_{h_\ell}
    \}_{\ell \ge 0}$ of nested FE spaces of continuous, piecewise
    linear functions on conforming, simplicial meshes $\{ \cT_{h_\ell}
\}_{\ell \ge 0}$ that satisfies the assumptions of
Section~\ref{sec:ml-qmc}: in particular, $M_\ell =
\mathrm{dim}(V_{h_\ell}) \eqsim h_\ell^{-d}$ and $h_\ell \eqsim
2^{-\ell}$.
The solutions $u_{h_\ell}(\cdot,\bsy) \in V_{h_\ell}$ defined in
Proposition~\ref{prop:ParmBilPrp}\,\textnormal{(e)} satisfy the
asymptotic error bound
\begin{equation} \label{eq:Est2}
  \|a^{1/2}(\cdot,\bsy)\,\nabla(u- u_{h_\ell})(\cdot,\bsy)\|_{L^2(D)}
  \,\lesssim\, h_\ell \,  T_2(\bsy)\,\|\Delta u(\cdot,\bsy)\|_{L^2(D)}\,,
\end{equation}
where
\begin{equation}\label{eq:defT2}
 T_2(\bsy) \,:=\, \hat{a}(\bsy)^{1/2}
 \qquad \mbox{for all} \quad
\bsy \in U_{\overline{\bsb}}\,.
\end{equation}
The result holds verbatim
for the FE solution $u_{h_\ell,s_\ell}(\cdot,(\bsy_{\{1:s_\ell\}},\boldsymbol{0}))
\in V_{h_\ell}$ of the $s_\ell$-term truncated problem.
\end{enumerate}
\end{proposition}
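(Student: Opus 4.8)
For part (a), the plan is to show that the partial sums of the series \eqref{eq:axy} are Cauchy in $W^{1,\infty}(D)$. Writing $b_n(\cdot,\bsy) := \sum_{j=1}^n \sqrt{\mu_j}\,\xi_j\, y_j$, the definition \eqref{eq:barbj} of $\overline{b}_j$ gives, for $m>n$,
\[
\|b_m - b_n\|_{W^{1,\infty}(D)} \le \sum_{j=n+1}^m \overline{b}_j\,|y_j|\,,
\]
which tends to $0$ as $n\to\infty$ whenever $\bsy\in U_{\overline{\bsb}}$, so the limit $b(\cdot,\bsy)$ lies in $W^{1,\infty}(D)$. Since $W^{1,\infty}(D)$ is a Banach algebra and $\exp$ has locally Lipschitz derivative, the chain rule gives $\exp(b(\cdot,\bsy)) \in W^{1,\infty}(D)$ with $\nabla\exp(b) = \exp(b)\,\nabla b$; multiplying by $a_0 \in W^{1,\infty}(D)$ (recall $a_*\equiv 0$) yields \eqref{eq:aLip}.

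For part (b), the key is a product rule in $\Hdiv$. First, the weak form \eqref{eq:var} shows that the flux $\vq := a(\cdot,\bsy)\,\nabla u(\cdot,\bsy)$ satisfies $\rdiv\vq = -f \in L^2(D)$, hence $\vq\in\Hdiv$. By part (a) and \eqref{eq:bounda} we have $1/a(\cdot,\bsy)\in W^{1,\infty}(D)$, so the $\Hdiv$ product rule gives
\[
\Delta u = \rdiv\Big(\tfrac{1}{a}\,\vq\Big) = \tfrac{1}{a}\,\rdiv\vq + \nabla\big(\tfrac{1}{a}\big)\cdot\vq = -\tfrac{f}{a} - \tfrac{\nabla a\cdot\nabla u}{a} \in L^2(D)\,,
\]
so that $u(\cdot,\bsy)\in W$. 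Taking the $L^2(D)$ norm, using $a\ge\check{a}(\bsy)$ and the a priori bound on $\|\nabla u\|_{L^2(D)} = \|u\|_V$ from Proposition~\ref{prop:ParmBilPrp}(d), gives exactly \eqref{eq:Est1}--\eqref{eq:defT1}. Strong measurability of $\bsy\mapsto u(\cdot,\bsy)\in W$ then follows from the measurability into $V$ (Proposition~\ref{prop:ParmBilPrp}(d)), part (a), and the explicit formula for $\Delta u$ above.

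For part (c), Galerkin orthogonality in the energy inner product $\scA(\bsy;\cdot,\cdot)$ identifies $u_{h_\ell}$ as the $\scA$-orthogonal projection of $u$ onto $V_{h_\ell}$, whence
\[
\|a^{1/2}\nabla(u-u_{h_\ell})\|_{L^2(D)}^2 = \inf_{v\in V_{h_\ell}}\scA(\bsy;u-v,u-v) \le \hat{a}(\bsy)\inf_{v\in V_{h_\ell}}\|\nabla(u-v)\|_{L^2(D)}^2\,.
\]
Choosing $v$ to be a suitable (quasi-)interpolant of $u$ reduces matters to a standard second-order interpolation estimate. For convex or smooth $D$ this is immediate: elliptic regularity gives $u(\cdot,\bsy)\in H^2(D)$ with $|u|_{H^2(D)}\lesssim\|\Delta u\|_{L^2(D)}$, and the piecewise-linear interpolation error is then $\lesssim h_\ell\,\|\Delta u\|_{L^2(D)}$, yielding \eqref{eq:Est2} with $T_2=\hat{a}^{1/2}$ and a constant independent of $\bsy$.

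The main obstacle is the non-convex polygonal/polyhedral case, where corner and edge singularities prevent $u(\cdot,\bsy)\in H^2(D)$ and the quasi-uniform interpolation estimate no longer delivers the full rate $\mathcal{O}(h_\ell)$. The plan here is to construct the spaces $V_{h_\ell}$ on meshes graded towards the reentrant corners/edges and to invoke weighted-Sobolev regularity: although $u\notin H^2(D)$, it belongs to a corner-weighted $H^2$ space controlled by $\|\Delta u\|_{L^2(D)}$, and the interpolation error on the graded meshes is again $\lesssim h_\ell\,\|\Delta u\|_{L^2(D)}$ while retaining $M_\ell\eqsim h_\ell^{-d}$ and $h_\ell\eqsim 2^{-\ell}$. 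Since the singular exponents are governed only by the domain geometry and by the uniform ellipticity \eqref{eq:bounda}, a single grading serves all $\bsy\in U_{\overline{\bsb}}$, so the implied constant is $\bsy$-independent and all parametric dependence stays isolated in $T_2(\bsy)=\hat{a}(\bsy)^{1/2}$. The truncated solutions $u_{h_\ell,s_\ell}$ are handled verbatim, since the truncated coefficient enjoys the same uniform bounds.
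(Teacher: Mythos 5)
Your proposal is correct and follows essentially the same route as the paper: part (b) rests on rewriting the equation as the Poisson problem $-\Delta u(\cdot,\bsy) = \frac{1}{a}\left[f + \nabla a(\cdot,\bsy)\cdot\nabla u(\cdot,\bsy)\right] \in L^2(D)$ and invoking the a priori $V$-bound \eqref{eq:aprioriu}, while part (c) combines regularity for the \emph{Laplacian} (full $H^2(D)$ for convex $D$, weighted $H^2$ on graded meshes for non-convex $D$) with Cea's lemma in the energy norm, exactly as the paper does. The only differences are cosmetic: you prove (a) directly where the paper cites a lemma, and you justify the identity for $\Delta u$ via the $H(\mathrm{div})$ product rule where the paper simply asserts it; note also that the $\bsy$-uniformity of the mesh grading in (c) comes purely from the reduction to the Laplacian (whose corner exponents are geometric), not from the uniform ellipticity bound \eqref{eq:bounda} as your wording suggests.
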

\begin{proof}
(a) \ This is a consequence of \eqref{eq:barbjl1} and
$\overline{\boldsymbol{\mu}}_G(U_{\overline{\bsb}})=1$ (see, eg.,
\cite[Lem.~2.28]{SG11_518}).

(b) \ Since $a(\cdot,\bsy) \in W^{1,\infty}(D)$, $u|_{\Gamma_{\cD}} = 0$
and \eqref{eq:bounda} holds, for every $\bsy\in U_{\overline{\bsb}}$, the
solution $u(\cdot,\bsy)$ of \eqref{eq:pde} also satisfies the
following Poisson problem
\begin{equation}\label{eq:MixBVP}
-\Delta u(\cdot,\bsy)
\,=\, \tilde{f}(\cdot,\bsy)
\,:=\,
\frac{1}{a(\cdot,\bsy)}\left[ f+\nabla a(\cdot,\bsy)\cdot \nabla u(\cdot,\bsy)\right]
\quad \mbox{in} \ L^2(D)\,,
\end{equation}
The bound \eqref{eq:Est1} with $T_1(\bsy)$ defined in \eqref{eq:defT1} then follows from
\eqref{eq:aprioriu}.

(c) \ The bound on $\| \Delta u(\cdot,\bsy)\|_{L^2(D)}$ in (b)
together with the classical regularity theory for the Laplace operator on
Lipschitz polygonal/polyhedral domains (see, e.g., \cite{Hack:2010})
implies \emph{weighted $H^2(D)$-regularity of $u(\cdot,\bsy)$} (with
suitable weights near reentrant corners and edges) for non-convex $D$ and
\emph{full $H^2(D)$-regularity} for convex $D$. The existence of a
sequence $\{ V_{h_\ell} \}_{\ell \ge 0}$ that satisfies the assumptions of
Section~\ref{sec:ml-qmc}, together with
\begin{equation}\label{eq:GalErr}
\inf_{v \in V_{h_\ell}} \| w - v \|_V
\,\lesssim\, h_\ell \, \| \Delta w \|_{L^2(D)} \qquad \text{for all} \quad w \in W\,,
\end{equation}
then follows from classical FE results (for convergence bounds in
weighted spaces see, e.g., \cite{Apel:1999}) and from
the norm equivalence $\|
\Delta w \|_{L^2(D)} \eqsim \|w\|_W$, for all $w \in W$. The error bound in
\eqref{eq:Est2} follows from an application of Cea's Lemma
\cite{Hack:2010} (in the energy norm) together with
\eqref{eq:ParmBilCont} and \eqref{eq:GalErr}.
\end{proof}
Note that, for convex $D$ and for homogeneous Dirichlet boundary
conditions on all of $\partial D$, $W = H^2(D)$ and the family $\{
\cT_{h_\ell} \}_{\ell \ge 0}$ can be constructed by uniform mesh
refinement of an arbitrary conforming triangulation $\cT_{h_0}$ of $D$.

\subsection{Parametric Regularity}
\label{sec:KeyRes}
%
As first observed in \cite{kss:2015}, in the uniform case, the
analysis of MLQMC methods for FE discretisations of PDEs requires
estimates of the parametric solution map $\bsy \mapsto u(\cdot,\bsy)$
\emph{in the regularity space $W$ in \eqref{eq:defW}}.
Here, we establish corresponding results for the lognormal parametric
problem \eqref{eq:pde}, \eqref{eq:axy}.
In order to be able to draw upon our results in \cite{gknsss:2012},
we restrict the analysis to the particular case
\begin{equation}\label{eq:Dirichlet}
\Gamma_\cN = \emptyset ,\quad \Gamma_\cD = \Gamma ,
\quad
V = H^1_0(D),\quad \text{and} \ \ V^* = H^{-1}(D)
\;.
\end{equation}
We denote by $\indx \,:=\, \{ \bsnu \in \bbN_0^\bbN \; : \;
|\bsnu|<\infty\}$, where $|\bsnu|:=\sum_{j=1}\nu_j$, the (countable) set
of all ``finitely supported'' multi-indices (i.e., sequences of
nonnegative integers for which only finitely many entries are nonzero).
For $\bsm,\bsnu\in\indx$, we write $\bsm\le\bsnu$ if $m_j\le \nu_j$ for
all $j$, we denote by $\bsnu-\bsm$ a multi-index with the elements
$\nu_j-m_j$, and we define $\binom{\bsnu}{\bsm}:= \prod_{j\ge 1}
\binom{\nu_j}{m_j}$. For a sequence of non-negative real numbers
$(\beta_j)_{j\in\bbN}$ we write $\bsbeta^\bsnu := \prod_{j\ge 1}
\beta_j^{\nu_j}$.  The following result is abstracted from the proof of
\cite[Thm.~14]{gknsss:2012}.

\begin{lemma} \label{lem:recur}
Given non-negative numbers $(\beta_j)_{j\in\bbN}$, let
$(\bbA_\bsnu)_{\bsnu\in\indx}$ and $(\bbB_\bsnu)_{\bsnu\in\indx}$ be
non-negative numbers satisfying the inequality
\[
  \bbA_\bsnu
  \,\le\, \sum_{\satop{\bsm\le\bsnu}{\bsm\ne\bsnu}} \binom{\bsnu}{\bsm} \bsbeta^{\bsnu-\bsm} \bbA_\bsm + \bbB_\bsnu\,,
  \quad\mbox{for any $\bsnu\in\indx$ \textnormal{(}including $\bsnu=\bszero$\textnormal{)}}.
\]
Then
\[
  \bbA_\bsnu
  \,\le\, \sum_{\bsk\le\bsnu} \binom{\bsnu}{\bsk} \Lambda_{|\bsk|}\, \bsbeta^\bsk\, \bbB_{\bsnu-\bsk}\,,
  \quad\mbox{for all $\bsnu\in\indx$},
\]
where the sequence $(\Lambda_n)_{n\ge 0}$ is defined recursively by
\begin{align} \label{eq:lambda}
  \Lambda_0 \,:=\, 1 \quad\mbox{and}\quad \Lambda_n \,:=\, \sum_{i=0}^{n-1} \binom{n}{i} \Lambda_i\,,
  \quad\mbox{for all $n\ge 1$}.
\end{align}
The result holds also
when both inequalities are replaced by equalities.
Moreover, we have
\begin{align} \label{eq:lambda-bound}
  \Lambda_n \,\le\, \frac{n!}{(\log 2)^n}\,, \quad\mbox{for all \ $n\ge
  0$}.
 \end{align}
\end{lemma}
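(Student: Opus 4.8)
The plan is to prove the three assertions in sequence: the closed-form bound, the equality version, and finally the growth estimate \eqref{eq:lambda-bound}. Throughout I will lean on the multi-index Vandermonde identity $\binom{\bsnu}{\bsm}\binom{\bsm}{\bsk}=\binom{\bsnu}{\bsk}\binom{\bsnu-\bsk}{\bsm-\bsk}$, which factorises over coordinates and thus reduces to the scalar case, together with the counting identity $\sum_{\bsk\le\bsell,\,|\bsk|=i}\binom{\bsell}{\bsk}=\binom{|\bsell|}{i}$.

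First I would establish the closed-form bound by induction on $|\bsnu|$. The base case $\bsnu=\bszero$ is immediate, since the recursion gives $\bbA_\bszero\le\bbB_\bszero=\Lambda_0\,\bbB_\bszero$. For the inductive step, every $\bsm$ with $\bsm\le\bsnu$ and $\bsm\ne\bsnu$ satisfies $|\bsm|<|\bsnu|$, so I may insert the inductive bound for $\bbA_\bsm$ into the right-hand side of the recursion. Because all $\bsbeta$-powers and binomials are nonnegative, it then suffices to re-expand the resulting double sum and check that it is bounded by the claimed expression. The key is to regroup by the index $\bsell:=\bsnu-\bsm+\bsk$ carrying the factor $\bbB_{\bsnu-\bsell}$, and to observe that the total power of $\bsbeta$ collected, namely $(\bsnu-\bsm)+\bsk$, equals exactly $\bsell$. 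Applying the Vandermonde identity twice, the coefficient of $\bsbeta^\bsell\,\bbB_{\bsnu-\bsell}$ collapses to $\binom{\bsnu}{\bsell}\sum_{\bsk<\bsell}\binom{\bsell}{\bsk}\Lambda_{|\bsk|}$, and grouping the inner sum by $|\bsk|=i$ turns it into $\binom{\bsnu}{\bsell}\sum_{i=0}^{|\bsell|-1}\binom{|\bsell|}{i}\Lambda_i=\binom{\bsnu}{\bsell}\Lambda_{|\bsell|}$ by the very definition \eqref{eq:lambda} of $\Lambda_n$. (The case $\bsell=\bszero$ is accounted for separately by the standalone term $\bbB_\bsnu$, with coefficient $\binom{\bsnu}{\bszero}\Lambda_0=1$.) This recovers the $\bsell$-th term of the target, completing the induction. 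The equality version then needs no separate argument: the computation just described is an exact identity when the hypothesis holds with equality, since the only inequality used was the insertion of the (now exact) inductive formula for $\bbA_\bsm$.

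Finally, for \eqref{eq:lambda-bound} I would argue by induction on $n$, with base case $\Lambda_0=1=0!/\alpha^0$. Assuming $\Lambda_i\le i!/\alpha^i$ for $i<n$ and substituting into \eqref{eq:lambda}, a short rearrangement using $\binom{n}{i}\,i!=n!/(n-i)!$ and the substitution $j=n-i$ yields $\Lambda_n\le (n!/\alpha^n)\sum_{j=1}^n \alpha^j/j!$. The claim then reduces to $\sum_{j=1}^n \alpha^j/j!\le 1$, which holds for every $n$ because $\sum_{j\ge 1}\alpha^j/j!=e^\alpha-1\le 1$ precisely when $\alpha\le\ln 2$. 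I expect the main obstacle to be the bookkeeping in the regrouping step of the closed-form bound---in particular verifying that the collected $\bsbeta$-exponent equals $\bsell$ and that the double Vandermonde reduction isolates exactly the defining recursion of $\Lambda_n$---rather than the growth estimate, which is routine once one recognises that $\{\Lambda_n\}$ are the ordered Bell (Fubini) numbers.
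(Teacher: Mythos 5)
Your proof is correct and follows essentially the same route as the paper's: induction on $|\bsnu|$, insertion of the inductive hypothesis, re-indexing so that the $\bbB$-factor carries index $\bsnu-\bsell$ (your $\bsell$ is exactly the paper's $\bsk'=\bsm'+\bsk$), the subset-of-a-subset binomial identity to collapse the coefficient to $\binom{\bsnu}{\bsell}\sum_{\bsk<\bsell}\binom{\bsell}{\bsk}\Lambda_{|\bsk|}$, and the counting identity $\sum_{\bsk\le\bsell,\,|\bsk|=i}\binom{\bsell}{\bsk}=\binom{|\bsell|}{i}$ to recover the defining recursion of $\Lambda_{|\bsell|}$. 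The only (minor) difference is that you spell out a self-contained induction for \eqref{eq:lambda-bound}, reducing it to $e^{\alpha}-1\le 1$, i.e.\ $\alpha\le\ln 2$, whereas the paper defers that step to the proof of Theorem~14 of the cited earlier work; your version is correct and makes the lemma fully self-contained.
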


\begin{proof}
We prove this result by induction. The case $\bsnu = \bszero$ holds
trivially. Suppose that the result holds for all $|\bsnu| < n$ with some
$n\ge 1$. Then for $|\bsnu| = n$, we substitute $\bsm' = \bsnu-\bsm$ in
the recursion and use the induction hypothesis to write
\begin{align*}
  \bbA_\bsnu
  &\,\le\, \sum_{\bszero\ne\bsm'\le\bsnu} \binom{\bsnu}{\bsnu-\bsm'} \beta^{\bsm'} \bbA_{\bsnu-\bsm'} + \bbB_\bsnu \\
  &\,\le\, \sum_{\bszero\ne\bsm'\le\bsnu} \binom{\bsnu}{\bsnu-\bsm'} \beta^{\bsm'}
  \sum_{\bsk\le\bsnu-\bsm'} \binom{\bsnu-\bsm'}{\bsk} \Lambda_{|\bsk|}\, \beta^\bsk\, \bbB_{\bsnu-\bsm'-\bsk} + \bbB_\bsnu .
\end{align*}
Substituting $\bsk' = \bsk + \bsm'$,
exchanging the order of summation,
and regrouping the binomial coefficients,
we obtain
\begin{align*}
  \bbA_\bsnu
  &\,\le\, \sum_{\bszero\ne\bsm'\le\bsnu} \binom{\bsnu}{\bsnu-\bsm'}
   \sum_{\bsm'\le\bsk'\le\bsnu} \binom{\bsnu-\bsm'}{\bsk'-\bsm'} \Lambda_{|\bsk'-\bsm'|}\, \beta^{\bsk'}\, \bbB_{\bsnu-\bsk'} + \bbB_\bsnu \\
  &\,=\, \sum_{\bszero\ne\bsk'\le\bsnu} \binom{\bsnu}{\bsk'}
   \sum_{\bszero\ne\bsm'\le\bsk'} \binom{\bsk'}{\bsk'-\bsm'} \Lambda_{|\bsk'-\bsm'|}\, \beta^{\bsk'}\, \bbB_{\bsnu-\bsk'} + \bbB_\bsnu,
\end{align*}
where
\begin{align*}
  \sum_{\bszero\ne\bsm\le\bsk'} \binom{\bsk'}{\bsk'-\bsm} \Lambda_{|\bsk'-\bsm|}
  &\,=\, \sum_{\satop{\bsm\le\bsk'}{\bsm\ne\bsk'}} \binom{\bsk'}{\bsm} \Lambda_{|\bsm|}
  \,=\, \sum_{i=0}^{|\bsk'|-1} \sum_{\satop{\bsm\le\bsk'}{|\bsm|=i}} \binom{\bsk'}{\bsm} \Lambda_i
  \,=\, \sum_{i=0}^{|\bsk'|-1} \binom{|\bsk'|}{i} \Lambda_i\,,
\end{align*}
which is equal to $\Lambda_{|\bsk'|}$ as required.
In the last step we used a simple counting identity (consider the
number of ways to select $i$ distinct balls from some baskets containing a
total number of $|\bsk'|$ distinct balls)
\begin{equation}\label{ident}
\sum_{\satop{\bsm\le\bsk'}{|\bsm|=i}} \binom{\bsk'}{\bsm} \,=\, \binom{|\bsk'|}{i}\,.
\end{equation}
The proof of \eqref{eq:lambda-bound} then follows as in the proof of
\cite[Thm.~14]{gknsss:2012}.
\end{proof}

\begin{theorem} \label{thm:key0}
For every $f\in L^2(D)$, $\bsnu\in \indx$ and $ \bsy \in
U_{\overline{\bsb}}$, with $T_1(\bsy)$ as in \eqref{eq:defT1}, we have
\[
  \|\Delta (\partial^\bsnu u(\cdot,\bsy) ) \|_{L^2(D)}
  \;\lesssim\; 
   \|f\|_{L^2(D)}
   \,T_1(\bsy)\,
 \overline{\bsb}^{\bsnu}\,2^{|\bsnu|}\,(|\bsnu|+1)!
\,.
\]
\end{theorem}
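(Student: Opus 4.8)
The plan is to start from the pointwise Poisson reformulation \eqref{eq:MixBVP}, which for the present pure Dirichlet problem reads $-\Delta u(\cdot,\bsy) = f/a(\cdot,\bsy) + \vec{g}(\cdot,\bsy)\cdot\nabla u(\cdot,\bsy)$ with logarithmic drift $\vec{g} := \nabla a/a = \nabla\log a$, and to apply the parametric derivative $\partial^\bsnu$ to it. Two features of the lognormal coefficient \eqref{eq:axy} (with $a_*\equiv 0$) make this tractable. First, writing $c_\bsm(\vx) := \prod_{j\ge 1}(\sqrt{\mu_j}\,\xi_j(\vx))^{m_j}$, every parametric derivative of $a$ and of $1/a$ is purely multiplicative, $\partial^\bsm a = c_\bsm\, a$ and $\partial^\bsm(1/a) = (-1)^{|\bsm|}c_\bsm/a$, with the pointwise bound $|c_\bsm|\le\overline{\bsb}^{\bsm}$ inherited from \eqref{eq:barbj}. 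Second, $\vec{g} = \nabla\log a_0 + \sum_{j\ge 1}\sqrt{\mu_j}\,y_j\,\nabla\xi_j$ is \emph{affine} in $\bsy$, so $\partial^\bsm\vec{g}=\bszero$ whenever $|\bsm|\ge 2$.

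First I would use these two facts together with the Leibniz rule to reduce $\partial^\bsnu$ of the reformulated equation to a short expression. The forcing contributes $\partial^\bsnu(f/a) = (-1)^{|\bsnu|}c_\bsnu\,f/a$, hence $\|\partial^\bsnu(f/a)\|_{L^2(D)}\le\overline{\bsb}^{\bsnu}\,\|f\|_{L^2(D)}/\check{a}(\bsy)$. Because $\partial^\bsm\vec{g}$ survives only for $|\bsm|\le 1$, the Leibniz expansion of $\partial^\bsnu(\vec{g}\cdot\nabla u)$ collapses to the two terms $\vec{g}\cdot\nabla(\partial^\bsnu u) + \sum_{j}\nu_j\sqrt{\mu_j}\,\nabla\xi_j\cdot\nabla(\partial^{\bsnu-\boldsymbol{e}_j}u)$, where $\boldsymbol{e}_j\in\indx$ carries a single $1$ in slot $j$. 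Taking $L^2(D)$ norms and using $\|\vec{g}\|_{L^\infty(D)}\le\|\nabla a(\cdot,\bsy)\|_{L^\infty(D)}/\check{a}(\bsy)$ together with $\sqrt{\mu_j}\|\nabla\xi_j\|_{L^\infty(D)}\le\overline{b}_j$ reduces the whole estimate to the \emph{first-order} parametric $V$-regularity of $u$, i.e.\ to $\|\nabla(\partial^\bsnu u)\|_{L^2(D)}$ at the index $\bsnu$ and at its neighbours $\bsnu-\boldsymbol{e}_j$. This collapse, a direct consequence of the affine structure of $\vec{g}$, is what ultimately yields the very mild $\sqrt{\hat{a}/\check{a}}$ dependence in the statement rather than powers of $\hat{a}/\check{a}$.

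The remaining ingredient is the parametric $V$-estimate, which I would obtain in the energy norm exactly as in \cite[Thm.~14]{gknsss:2012}. Differentiating the weak form \eqref{eq:var}, testing with $v=\partial^\bsnu u$, and using the \emph{pointwise} bound $|\partial^\bsm a| = |c_\bsm|\,a\le\overline{\bsb}^{\bsm}\,a$ with the $a$-weighted Cauchy--Schwarz inequality yields, for $\bbE_\bsnu:=\|a^{1/2}\nabla(\partial^\bsnu u)\|_{L^2(D)}$, the recursion $\bbE_\bsnu\le\sum_{\bszero\ne\bsm\le\bsnu}\binom{\bsnu}{\bsm}\overline{\bsb}^{\bsm}\,\bbE_{\bsnu-\bsm}$ for $\bsnu\ne\bszero$, together with $\bbE_{\bszero}\lesssim\|f\|_{L^2(D)}/\check{a}(\bsy)^{1/2}$. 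This is precisely the hypothesis of Lemma~\ref{lem:recur} with $\bsbeta=\overline{\bsb}$ and forcing $\bbB_\bsnu=\delta_{\bsnu,\bszero}\bbE_{\bszero}$; the lemma collapses the solution to $\bbE_\bsnu\le\Lambda_{|\bsnu|}\,\overline{\bsb}^{\bsnu}\,\bbE_{\bszero}$, whence $\|\nabla(\partial^\bsnu u)\|_{L^2(D)}\le\check{a}(\bsy)^{-1/2}\bbE_\bsnu\lesssim\Lambda_{|\bsnu|}\,\overline{\bsb}^{\bsnu}\,\|f\|_{L^2(D)}/\check{a}(\bsy)$.

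Finally I would assemble the pieces. Inserting the $V$-bounds at levels $|\bsnu|$ and $|\bsnu|-1$ into the $L^2$ estimate of the second paragraph and using $\overline{b}_j\,\overline{\bsb}^{\bsnu-\boldsymbol{e}_j}=\overline{\bsb}^{\bsnu}$ and $\sum_j\nu_j=|\bsnu|$ gives
\[
\|\Delta(\partial^\bsnu u)\|_{L^2(D)}\;\lesssim\;\frac{\|f\|_{L^2(D)}}{\check{a}(\bsy)}\,\overline{\bsb}^{\bsnu}\Big(1+\frac{\|\nabla a(\cdot,\bsy)\|_{L^\infty(D)}}{\check{a}(\bsy)}\,\Lambda_{|\bsnu|}+|\bsnu|\,\Lambda_{|\bsnu|-1}\Big)\,.
\]
The first two summands regroup into $T_1(\bsy)$ of \eqref{eq:defT1}, while $\Lambda_n\le n!/(\ln 2)^n$ from \eqref{eq:lambda-bound}, the estimate $1/\ln 2<2$, and $|\bsnu|\,\Lambda_{|\bsnu|-1}\le|\bsnu|!\,(\ln 2)^{-(|\bsnu|-1)}$ bound every combinatorial factor that appears by a constant times $2^{|\bsnu|}(|\bsnu|+1)!$; since $\sqrt{\hat{a}(\bsy)/\check{a}(\bsy)}\ge 1$, the claimed bound follows with room to spare. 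The points needing care are the justification that $\partial^\bsnu u$ exists and that differentiation of the weak and strong forms in $\bsy$ is legitimate (supplied by Propositions~\ref{prop:ParmBilPrp} and~\ref{prop:BilRegPrp}) and the bookkeeping of the combinatorial constants; the genuinely substantive step is the energy-norm recursion feeding Lemma~\ref{lem:recur}, everything else being a structural consequence of the exponential form of $a$.
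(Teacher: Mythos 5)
Your proof is correct, and it takes a genuinely different route from the paper's. The paper stays inside the divergence structure: it sets $g_\bsnu := \nabla\cdot\big(a\,\nabla(\partial^\bsnu u)\big)$, derives from the differentiated weak form a recursion for $\|a^{-1/2}g_\bsnu\|_{L^2(D)}$ whose forcing term involves the energy norms $\|a^{1/2}\nabla(\partial^\bsm u)\|_{L^2(D)}$, solves that recursion with Lemma~\ref{lem:recur} (this is where the auxiliary sequence $\overline{\Lambda}_n$, the constraint $\alpha e^\alpha\le 1$, and two further applications of the identity \eqref{ident} enter), and only at the end converts $\|g_\bsnu\|_{L^2(D)}$ into $\|\Delta(\partial^\bsnu u)\|_{L^2(D)}$ via \eqref{eq:Est1}; the factor $\sqrt{\hat{a}(\bsy)/\check{a}(\bsy)}$ is exactly the price of passing between $\|g_\bsnu\|_{L^2(D)}$ and $\|a^{-1/2}g_\bsnu\|_{L^2(D)}$. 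You instead differentiate the Poisson reformulation \eqref{eq:MixBVP}, and the two structural facts you isolate --- every $\bsy$-derivative of $1/a$ is again $1/a$ times the multiplicative factor $\prod_{j\ge1}(-\sqrt{\mu_j}\,\xi_j)^{m_j}$, and $\nabla\log a$ is affine in $\bsy$ --- collapse the Leibniz sum to the index $\bsnu$ and its neighbours $\bsnu-\boldsymbol{e}_j$, so that only the first-order energy bounds $\|a^{1/2}\nabla(\partial^\bsm u)\|_{L^2(D)}\le\Lambda_{|\bsm|}\,\overline{\bsb}^{\bsm}\,\|f\|_{L^2(D)}/\sqrt{\check{a}(\bsy)}$ are needed; these are precisely the bounds the paper itself imports from \cite[Thm.~14]{gknsss:2012}, and your trivial-forcing application of Lemma~\ref{lem:recur} recovers them. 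Your route is shorter, avoids $\overline{\Lambda}_n$ and the double counting-identity manipulations entirely, and is slightly sharper: the factor $\sqrt{\hat{a}/\check{a}}$ never arises and serves only as padding $\ge 1$ in matching the stated bound. What the paper's heavier machinery buys is reuse: essentially the same recursion with nontrivial forcing is unavoidable in the proof of Theorem~\ref{thm:key1} for the derivatives of the FE error $u-u_h$ (there is no strong-form equation for the error, so no analogue of your collapse exists), and the proof of Theorem~\ref{thm:key0} doubles as a template for it. One point you should state more carefully: Propositions~\ref{prop:ParmBilPrp} and~\ref{prop:BilRegPrp} do not by themselves license differentiating the strong form in $\bsy$; the clean argument is that $\bsy\mapsto u(\cdot,\bsy)\in V$ is smooth (from \cite{gknsss:2012}) and $\Delta:V\to V^*$ is bounded and linear, so parametric and spatial derivatives commute in $V^*$, after which your Leibniz computation exhibits the right-hand side, and hence $\Delta(\partial^\bsnu u)$, as an element of $L^2(D)$ --- the same bootstrapping the paper performs ``formally, at this stage'' and then justifies by induction.
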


\begin{proof}
Throughout this proof, all estimates are for arbitrary
$\bsy\in U_{\bar{\bsb}}\subset U_{\bsb}$
with the understanding that constants implied in $\simeq$ and $\lesssim$
do not depend on $\bsy$.
For any multi-index $\bsnu\in\indx$, we define (formally, at this stage) the expression
\[
  g_\bsnu(\cdot,\bsy) \,:=\, \nabla\cdot (a(\cdot,\bsy)\, \nabla (\partial^\bsnu u(\cdot,\bsy)) )
\,.
\]

Differentiation of order $|\bsnu| > 0$ of
the parametric, deterministic variational formulation \eqref{eq:var} with respect to $\bsy$
reveals that
$$
0 \,=\, \partial^\bsnu \, \scA(\bsy;u(\cdot,\bsy),v)
  \,=\, \int_D \nabla v(\vx) \cdot \partial^\bsnu\big(a(\vx,\bsy) \nabla u(\vx,
  \bsy)\big) \,\rd\vx \qquad \text{for all} \quad v \in V\,.
$$
The Leibniz rule
$\partial^\bsnu (PQ) = \sum_{\bsm\le\bsnu}
\binom{\bsnu}{\bsm} (\partial^{\bsnu-\bsm}P)(\partial^\bsm Q)$
and integration by parts imply
\begin{align*}
 \nabla\cdot \partial^\bsnu (a\nabla u) \,=\,
\nabla \cdot
  \bigg(\sum_{\bsm \le\bsnu} \binom{\bsnu}{\bsm}
  (\partial^{\bsnu-\bsm} a)\, \nabla (\partial^\bsm u) \bigg)
 \,=\, 0
\quad \mbox{in}\ \ V^*\,.
\end{align*}
Separating out the $\bsm = \bsnu$ term yields the following identity in
$V^*$
\begin{align*}
\underbrace{\nabla\cdot(a\nabla(\partial^\bsnu u))}_{= g_\bsnu}
  &\,=\, - \nabla \cdot \bigg(\sum_{\satop{\bsm\le\bsnu}{\bsm\ne\bsnu}} \binom{\bsnu}{\bsm}
  (\partial^{\bsnu-\bsm} a)\, \nabla (\partial^\bsm u) \bigg) \\
  &\,=\, - \sum_{\satop{\bsm\le\bsnu}{\bsm\ne\bsnu}} \binom{\bsnu}{\bsm}
  \nabla\cdot \bigg(\frac{\partial^{\bsnu-\bsm} a}{a}\, (a \nabla (\partial^\bsm u)) \bigg) \\
  &\,=\, - \sum_{\satop{\bsm\le\bsnu}{\bsm\ne\bsnu}} \binom{\bsnu}{\bsm}
  \bigg(\frac{\partial^{\bsnu-\bsm} a}{a}\, g_\bsm \, + \, \nabla
    \bigg(\frac{\partial^{\bsnu-\bsm} a}{a}\bigg)\cdot (a \nabla
    (\partial^\bsm u)) \bigg)\,.
\end{align*}
In the last step we used the identity $\nabla\cdot (p\,\bsq) =
p\,\nabla\cdot\bsq + \nabla p\cdot\bsq$.
Due to \eqref{eq:bounda} we may multiply $g_\bsnu$ by $a^{-1/2}$
and obtain, for any $|\bsnu| > 0$, the recursive bound
\begin{align} \label{eq:step1}
  \|a^{-1/2}\,g_\bsnu\|_{L^2(D)} \,\le\, \sum_{\satop{\bsm\le\bsnu}{\bsm\ne\bsnu}} \binom{\bsnu}{\bsm}
  \bigg(&\;\bigg\|\frac{\partial^{\bsnu-\bsm}
          a}{a}\bigg\|_{L^\infty(D)}\, \|a^{-1/2} g_\bsm \|_{L^2(D)}
  \; + \nonumber
\\
  &+ \; \bigg\|\nabla \bigg(\frac{\partial^{\bsnu-\bsm}
    a}{a}\bigg)\bigg\|_{L^\infty(D)}\, \|a^{1/2} \nabla (\partial^\bsm u)\|_{L^2(D)}
\bigg)
\;.
\end{align}
By assumption, $g_0 = \nabla\cdot(a(\cdot,\bsy)\nabla
  u(\cdot,\bsy)) = -f \in L^2(D)$,
so that we obtain (by induction with respect to $|\bsnu|$ and
using \eqref{ass:regular} and \eqref{eq:barbjl1})
from \eqref{eq:step1}  that $a^{-1/2}(\cdot,\bsy)g_\bsnu(\cdot,\bsy)\in
L^2(D)$, and hence from Proposition~\ref{prop:ParmBilPrp}(a) that
$g_\bsnu(\cdot,\bsy)\in L^2(D)$ for every $\bsnu\in
\indx$. Thus, the above
formal identities indeed hold in $L^2(D)$.

To obtain a bound on \eqref{eq:step1}, we observe that it follows
from the particular form of $a$ in \eqref{eq:axy} that
\begin{equation}\label{eq:dady}
  \partial^{\bsnu-\bsm} a \,=\, (a-a_*) \prod_{j\ge 1}
  (\sqrt{\mu_j}\,\xi_j)^{\nu_j-m_j} \quad \text{for all} \quad
  \bsnu\ne\bsm\,.
\end{equation}
Since we assumed $a_* = 0$ in \eqref{ass:regular}, we then have
\begin{equation} \label{eq:est1}
  \bigg\|\frac{\partial^{\bsnu-\bsm} a}{a}\bigg\|_{L^\infty(D)}
  \,=\, \bigg\|\prod_{j\ge 1} (\sqrt{\mu_j}\,\xi_j)^{\nu_j-m_j}
  \bigg\|_{L^\infty(D)} \,\le\, \prod_{j\ge 1} \Big(\sqrt{\mu_j}\,\|\xi_j\|_{L^\infty(D)}\Big)^{\nu_j-m_j}
  \,=\, \bsb^{\bsnu-\bsm}
\;.
\end{equation}
Moreover, using the product rule, we have
\[
  \nabla\bigg(\frac{\partial^{\bsnu-\bsm} a}{a}\bigg)
  \,=\, \sum_{k\ge 1} (\nu_k-m_k)(\sqrt{\mu_k}\,\xi_k)^{\nu_k-m_k-1} (\sqrt{\mu_k}\,\nabla\xi_k)
        \prod_{\satop{j\ge 1}{j\ne k}} (\sqrt{\mu_j}\,\xi_j)^{\nu_j-m_j}\,.
\]
Due to the definition of $\overline{b}_j$ in \eqref{eq:barbj}, this
implies, in a similar manner to \eqref{eq:est1}, that
\begin{equation} \label{eq:est2}
\bigg\|\nabla\bigg(\frac{\partial^{\bsnu-\bsm} a}{a}\bigg) \bigg\|_{L^\infty(D)}
 \,\le\, |\bsnu-\bsm| \,\overline\bsb^{\bsnu-\bsm}.
\end{equation}
Substituting \eqref{eq:est1} and \eqref{eq:est2} into \eqref{eq:step1}, we
conclude that
\begin{align*}
 \underbrace{\|a^{-1/2}g_\bsnu\|_{L^2(D)}}_{\bbA_\bsnu}  \,\le\,
 \sum_{\satop{\bsm\le\bsnu}{\bsm\ne\bsnu}} \binom{\bsnu}{\bsm}\,\bsb^{\bsnu-\bsm}\,
 \underbrace{\|a^{-1/2}g_\bsm \|_{L^2(D)}}_{\bbA_\bsm}
 \,+\, B_\bsnu\,,
\end{align*}
where
\begin{align} \label{eq:Bnu}
  B_\bsnu
  \,:=&\,
  \sum_{\satop{\bsm\le\bsnu}{\bsm\ne\bsnu}} \binom{\bsnu}{\bsm}\,
  |\bsnu-\bsm| \,\overline{\bsb}^{\bsnu-\bsm}\,
  \|a^{1/2} \nabla (\partial^\bsm u)\|_{L^2(D)} \nonumber\\
 \,\le&\,
  \sum_{\satop{\bsm\le\bsnu}{\bsm\ne\bsnu}} \binom{\bsnu}{\bsm}\,
  |\bsnu-\bsm| \,\overline{\bsb}^{\bsnu-\bsm}\,
  \Lambda_{|\bsm|}\,\bsb^\bsm\,
 \frac{\|f\|_{V^*}}{\sqrt{\check{a}(\bsy)}}
 \,\le\,
  \overline\Lambda_{|\bsnu|}\,\overline{\bsb}^\bsnu
  \frac{\|f\|_{V^*}}{\sqrt{\check{a}(\bsy)}}
  \,.
\end{align}
In the first inequality in \eqref{eq:Bnu} we used
\begin{equation} \label{eq:induct3}
  \|a^{1/2} \nabla (\partial^\bsm u)\|_{{L^2(D)}}
  \,\le\, \Lambda_{|\bsm|}\,\bsb^\bsm\, \frac{\| f\|_{V^*}}{\sqrt{\check{a}(\bsy)}}\,,
\end{equation}
which was established in the proof of \cite[Thm.~14]{gknsss:2012}.
In the second inequality in \eqref{eq:Bnu} we used the bound $b_j\le
\overline{b}_j$, for all $j\ge 1$, and the identity \eqref{ident} to
write,  with $n = |\bsnu|$,
\[
  \sum_{\satop{\bsm\le\bsnu}{\bsm\ne\bsnu}}
  \binom{\bsnu}{\bsm}\,
  |\bsnu-\bsm|\, \Lambda_{|\bsm|}
  \,=\, \sum_{i=0}^{n-1} \sum_{\satop{\bsm\le\bsnu}{|\bsm|=i}}
  \binom{\bsnu}{\bsm}\, (n-i)\, \Lambda_i
  \,=\, \sum_{i=0}^{n-1} \binom{n}{i}\, (n-i)\,
    \Lambda_i
  \,=:\, \overline\Lambda_n\,.
\]

Since
$\bbA_\bszero = \|a^{-1/2} f\|_{L^2(D)} \le \|f\|_{L^2(D)}/\sqrt{\check{a}(\bsy)}$,
we now define
\[
 \bbB_\bsnu \,:=\, C_{\rm emb}\,\overline\Lambda_{|\bsnu|}\,\overline{\bsb}^\bsnu
  \frac{\|f\|_{L^2(D)}}{\sqrt{\check{a}(\bsy)}}\,,
\quad
\mbox{where}\;\;
C_{\rm emb} := \sup_{f\in L^2(D)} \frac{\| f \|_{V^*}}{\| f \|_{L^2(D)}}
\;.
\]
Then $\bbA_\bszero\le \bbB_\bszero$ and $B_\bsnu\le \bbB_\bsnu$ for all $\bsnu$.
We may now apply Lemma~\ref{lem:recur} to obtain
\begin{align}\label{eq:step2}
 \|a^{-1/2}g_\bsnu\|_{L^2(D)}
 &\,\le\,
 \sum_{\bsk\le\bsnu} \binom{\bsnu}{\bsk}\Lambda_{|\bsk|}\,\bsb^\bsk\,
 C_{\rm emb}\,\overline\Lambda_{|\bsnu-\bsk|}\,\overline{\bsb}^{\bsnu-\bsk}
  \frac{\|f\|_{L^2(D)}}{\sqrt{\check{a}(\bsy)}}\,.
\end{align}%
Note the extra factor $n-i$ in the definition of $\overline\Lambda_n$
compared to $\Lambda_n$ in \eqref{eq:lambda} so that
$\Lambda_n\le\overline\Lambda_n$. Using the bound in
\eqref{eq:lambda-bound}, we have for all $\alpha\le\log 2 =
0.69\cdots$,
\begin{align*}
  \overline\Lambda_n
  \,\le\, \sum_{i=0}^{n-1} \binom{n}{i} (n-i)\,\frac{i!}{\alpha^i}
  \,=\, \frac{n!}{\alpha^n}\,\alpha\,\sum_{i=0}^{n-1} \frac{\alpha^{n-i-1}}{(n-i-1)!}
  \,=\, \frac{n!}{\alpha^n}\,\alpha\,\sum_{k=0}^{n-1} \frac{\alpha^k}{k!}
  \,\le\, \frac{n!}{\alpha^n}\,\alpha\, e^\alpha
  \,\le\, \frac{n!}{\alpha^n},
\end{align*}
where the final step is valid provided that $\alpha\,e^\alpha \le 1$. Thus
it suffices to choose $\alpha \le 0.567\cdots$. For convenience we take
$\alpha = 0.5$ to bound \eqref{eq:step2}. Using again the identity
\eqref{ident}, we obtain
\begin{equation} \label{eq:identThm6}
  \sum_{\bsk\le\bsnu} \binom{\bsnu}{\bsk} |\bsk|!\, |\bsnu-\bsk|!
  \,=\, \sum_{i=0}^{|\bsnu|} i!\,(|\bsnu|-i)! \sum_{\satop{\bsk\le\bsnu}{|\bsk|=i}} \binom{\bsnu}{\bsk}
  \,=\, \sum_{i=0}^{|\bsnu|} i!\,(|\bsnu|-i)! \binom{|\bsnu|}{i}
  \,=\, (|\bsnu|+1)!\;.
\end{equation}
Applying these estimates to \eqref{eq:step2} gives
\begin{equation} \label{eq:step3}
 \|a^{-1/2}g_\bsnu\|_{L^2(D)}
 \,\le\, C_{\rm emb}\,\frac{\|f\|_{L^2(D)}}{\sqrt{\check{a}(\bsy)}}\,\overline{\bsb}^\bsnu\,2^{|\bsnu|}\,(|\bsnu|+1)!\,.
\end{equation}
Since $a^{-1/2}g_\bsnu = a^{-1/2}\nabla\cdot(a\nabla(\partial^\bsnu
u)) = a^{1/2}\Delta(\partial^\bsnu u) + a^{-1/2}\,(\nabla a\cdot
\nabla(\partial^\bsnu u))$, we have
\begin{align*}
 \|a^{1/2}\Delta(\partial^\bsnu u)\|_{L^2(D)}
 \,\le\, \|a^{-1/2}g_\bsnu\|_{L^2(D)} \,+\, \|a^{-1/2}\,(\nabla a\cdot\nabla(\partial^\bsnu u))\|_{L^2(D)}\,,
\end{align*}
which yields
\begin{align*}
 \sqrt{\check{a}(\bsy)}\,\|\Delta(\partial^\bsnu u)\|_{L^2(D)}
 \,\le\, \|a^{-1/2}g_\bsnu\|_{L^2(D)}
 \,+\, \frac{\|\nabla a(\cdot,\bsy)\|_{L^\infty(D)} }{\check{a}(\bsy)} \|a^{1/2}\nabla(\partial^\bsnu u)\|_{L^2(D)}\,,
\end{align*}
and in turn
\begin{align} \label{eq:step4}
 \|\Delta(\partial^\bsnu u)\|_{L^2(D)}
 \,\le\, \frac{\|a^{-1/2}g_\bsnu\|_{L^2(D)}}{\sqrt{\check{a}(\bsy)}}
 \,+\, \frac{\|\nabla a(\cdot,\bsy)\|_{L^\infty(D)} }{\check{a}(\bsy)}\,
 \frac{\|a^{1/2}\nabla(\partial^\bsnu u)\|_{L^2(D)}}{\sqrt{\check{a}(\bsy)}}\,.
\end{align}
Substituting \eqref{eq:step3} and \eqref{eq:induct3} into
\eqref{eq:step4}, and using $\Lambda_{|\bsnu|}\le
2^{|\bsnu|}|\bsnu|!$, as well as
$\bsb^\bsnu\le \overline{\bsb}^\bsnu$,
we arrive at
\begin{align*}
 \|\Delta(\partial^\bsnu u)\|_{L^2(D)}
 &\,\le\, C_{\rm emb}\,\|f\|_{L^2(D)}\bigg( \frac{1}{\check{a}(\bsy)} + \frac{\|\nabla a(\cdot,\bsy)\|_{L^\infty}}{\check{a}(\bsy)^2}\bigg)
 \overline{\bsb}^\bsnu\,\,2^{|\bsnu|}\,(|\bsnu|+1)!\,.
\end{align*}
This completes the proof.
\end{proof}

\subsection{QMC convergence and design}
\label{sec:QMCConv}
We first review the quasi-Monte Carlo theory that is essential for the QMC
convergence rate estimates. We follow the setting and analysis in
\cite[Section~4]{gknsss:2012} which, in turn, uses results from
\cite{NK14}, see also the earlier references
\cite{WW00,WW02,KWWat06,KSWWat10}.

In our multilevel algorithm \eqref{ML-rQMC}, for every level we apply
a randomly shifted lattice rule $Q_\ell$ to the integrand $F_\ell -
F_{\ell-1}$ which is multiplied by the product of univariate normal
densities. Replacing $F_\ell - F_{\ell-1}$ by a general function $\calF$
in $s$ variables, we have the general integration problem $\int_{\bbR^s}
\calF(\bsy)\,\prod_{j=1}^s \phi(y_j)\,\rd\bsy$, with $\phi(y) =
\exp(-y^2/2)/\sqrt{2\pi}$. The strategy in \cite{gknsss:2012} is to
consider a \emph{weighted} function space with norm defined by
\begin{align} \label{eq:Wnorm}
  &\| \mathcal{F}\|_{\calW_s}^2 \\
  &\,:=\,
  \sum_{\setu\subseteq\{1:s\}} \frac{1}{\gamma_\setu}
  \int_{\bbR^{|\setu|}}
  \Bigg(
  \int_{\bbR^{s-|\setu|}}
  \frac{\partial^{|\setu|} \mathcal{F}}{\partial \bsy_\setu}(\bsy_\setu;\bsy_{\{1:s\}\setminus\setu})
  \prod_{j\in\{1:s\}\setminus\setu} \phi(y_j)\,\rd\bsy_{\{1:s\}\setminus\setu}
  \Bigg)^2
  \prod_{j\in\setu} \psi_j^2(y_j)
  \,\rd\bsy_\setu\;, \nonumber
\end{align}
where $\{1:s\}$ is shorthand notation for the set of indices
$\{1,2,\ldots,s\}$, and
$\frac{\partial^{|\setu|}\mathcal{F}}{\partial\bsy_\setu}$ denotes the
mixed first derivative with respect to the ``active'' variables
$\bsy_\setu = (y_j)_{j\in\setu}$ while $\bsy_{\{1:s\}\setminus\setu} =
(y_j)_{j\in\{1:s\}\setminus\setu}$ denotes the ``inactive'' variables. To
ensure that the norm is finite for our particular integrand $\mathcal{F} =
F_\ell - F_{\ell-1}$, we follow \cite{gknsss:2012} and choose the
\emph{weight functions}
\begin{equation} \label{eq:psi}
  \psi_j^2(y_j) \,=\, \exp (-2\,\alpha_j\, |y|)\;,
  \quad\mbox{with}\quad
  0 < \alpha_{\min} \le \alpha_j \le \alpha_{\max} < \infty\;.
\end{equation}
In Corollary~\ref{cor:psi} below, we will further impose the condition
that $\alpha_j> 9 \overline{b}_j$, with $\overline{b}_j$ defined
by~\eqref{eq:barbj}.

A key ingredient in the analysis of \cite{gknsss:2012}, see also
\cite{kss:2012,kss:2015}, is to choose \emph{weight parameters}
$\gamma_\setu>0$, for every set $\setu\subset\bbN$ of finite cardinality
$|\setu|<\infty$, such that the overall error bound does not grow with
increasing dimension $s$. Such analysis makes use of the fact that the
generating vector $\bsz$ for a randomly shifted lattice rule (see
\eqref{SL-r}) can be constructed using a component-by-component algorithm
to achieve a certain error bound, see \cite[Thm.~15]{gknsss:2012} or more
generally \cite[Thm.~8]{NK14}. In particular, for $\mathcal{F} = F_\ell -
F_{\ell-1}$ the result is that the variance (or the mean square error) of
$\mathcal{Q}_\ell$ is bounded by
\begin{equation} \label{eq:cbc}
  \mathbb{V}_\bsDelta [\mathcal{Q}_\ell(F_{\ell} - F_{\ell-1})]
  \,\le\, R_\ell^{-1}\,
  \Bigg(\sum_{\emptyset\subseteq \setu\subseteq\{1:s_{\ell}\}} \gamma_\setu^\lambda\, \prod_{j\in\setu} \varrho_j(\lambda)\Bigg)^{1/\lambda}\,
   [\varphi_{\rm tot}(N_\ell)]^{-1/\lambda} \,
   \|F_{\ell} - F_{\ell-1}\|_{\mathcal{W}_{s_{\ell}}}^2
\end{equation}
for all $\lambda\in (1/2,1]$, with
\begin{equation} \label{eq:rho}
  \varrho_j(\lambda) \,:=\, 2\left(\frac{\sqrt{2\pi}\exp(\alpha_j^2/\eta_*)}{\pi^{2-2\eta_*}(1-\eta_*)\eta_*}\right)^\lambda\,
  \zeta\big(\lambda + \tfrac{1}{2}\big),
  \quad\mbox{and}\quad
  \eta_* \,:=\, \frac{2\lambda-1}{4\lambda}\,,
\end{equation}
where $\varphi_{\rm tot}(N) := |\{1\le z\le N-1 : \gcd(z,N)=1\}|$ denotes
the Euler totient function, and $\zeta(x) := \sum_{k=1}^\infty k^{-x}$
denotes the Riemann zeta function. Note that $\varphi_{\rm tot}(N)
  = N-1$ for $N$ prime and it has been verified that $1/\varphi_{\rm
    tot}(N) < 9/N$ for all $N \le 10^{30}$. Hence, from a practical
  point of view we can use
\begin{equation}\label{totient}
 \varphi_{\rm tot}(N) \,\eqsim\, N\,.
\end{equation}
The best rate of convergence clearly comes from choosing $\lambda$ close
to $1/2$, but the advantage is offset by the fact that $\zeta\big(\lambda
+ \tfrac{1}{2})\to\infty$ as $\lambda\to \tfrac{1}{2}_+$.

To verify Assumption M2 in Theorem~\ref{thm:mlqmc}, it remains to bound
$\|F_{\ell} - F_{\ell-1}\|_{\mathcal{W}_{s_{\ell}}}$ in \eqref{eq:cbc}.
Due to the triangle inequality,
\[
\|F_{\ell} - F_{\ell-1}\|_{\mathcal{W}_{s_{\ell}}} \, \le \,
\|\mathcal{G}(u_{h_\ell,s_\ell} - u_{s_\ell} )
\|_{\mathcal{W}_{s_{\ell}}} \, + \,
\|\mathcal{G}(u_{s_\ell} - u_{h_{\ell-1},s_{\ell-1}} )
\|_{\mathcal{W}_{s_{\ell}}} \,,
\]
it follows from the next theorem and the subsequent corollary that M2
holds with $\beta = 4$, in the case $s_\ell = s_{\ell-1}$. The remainder
of the paper is then devoted to giving a choice of weights
$\gamma_\setu$ that guarantees that the implied constant in M2 is
independent of $s_\ell$.
\begin{theorem} \label{thm:key1}
Let $s \in \mathbb{N}$, $h>0$ and $a_*=0$, and let $\bsnu\in \indx$ be a
general multi-index. Assume \eqref{ass:regular} and \eqref{eq:barbjl1}.
Then, for every $f\in L^2(D)$ and for every $\calG\in
L^2(D)^*$ with representer $g \in L^2(D)$, we have for all $\bsy\in
U_{\overline{\bsb}}$\,,
\[
  | \partial^\nu \calG(u(\cdot,\bsy) - u_h(\cdot,\bsy)) | \,\lesssim\,
   h^2\,\|f\|_{L^2(D)}\,\|g\|_{L^2(D)} \,
   H(\bsy) \,\overline{\bsb}^{\bsnu} \,2^{|\bsnu|} (|\bsnu|+5)!\,,
\]
with an implied constant that is independent of
$h$, $f$ and $g$, as well as of $\bsy \in U_{\overline{\bsb}}$,
and with
\begin{equation} \label{eq:defH}
  H(\bsy) \,:=\,
  T_1^2(\bsy)\,T_2^2(\bsy)
  \,<\, \infty \,,
\end{equation}
where $T_1(\bsy)$ and $T_2(\bsy)$ are as defined in \eqref{eq:defT1} and
\eqref{eq:defT2}, respectively.
\end{theorem}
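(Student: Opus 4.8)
The plan is to reduce the claim, via an Aubin--Nitsche duality argument, to an energy-norm bound on the parametric derivatives of the finite element error, and then to establish that bound through the recursion of Lemma~\ref{lem:recur} fed by the regularity estimate of Theorem~\ref{thm:key0}. Since $\calG$ is linear and independent of $\bsy$, we have $\partial^\bsnu\calG(u-u_h)=\calG(\partial^\bsnu(u-u_h))$. Because $\Gamma_\cN=\emptyset$ in \eqref{eq:Dirichlet}, the form $\scA(\bsy;\cdot,\cdot)$ is symmetric, so I introduce the adjoint solution $z(\cdot,\bsy)\in V$ of $\scA(\bsy;v,z)=\calG(v)=\int_D g\,v$ for all $v\in V$; this is \emph{exactly} a problem of the form \eqref{eq:var} with the source $f$ replaced by the representer $g$, and its FE approximation I denote by $z_h$. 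Testing the adjoint equation with $v=u-u_h\in V$ and using the primal Galerkin orthogonality $\scA(\bsy;u-u_h,v_h)=0$ for $v_h\in V_h$ (applied to $z_h\in V_h$) gives the identity $\calG(u-u_h)=\scA(\bsy;u-u_h,z-z_h)=\int_D a\,\nabla(u-u_h)\cdot\nabla(z-z_h)$, valid for every $\bsy\in U_{\overline{\bsb}}$.

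Next I differentiate this identity with $\partial^\bsnu$. Applying the Leibniz rule to the triple product $a\,\nabla(u-u_h)\cdot\nabla(z-z_h)$ yields a sum over $\bsm_1+\bsm_2+\bsm_3=\bsnu$ of terms $\binom{\bsnu}{\bsm_1,\bsm_2,\bsm_3}\int_D(\partial^{\bsm_1}a)\,\nabla\partial^{\bsm_2}(u-u_h)\cdot\nabla\partial^{\bsm_3}(z-z_h)$. Writing $\partial^{\bsm_1}a=(\partial^{\bsm_1}a/a)\,a$, applying Cauchy--Schwarz, and using the pointwise bound $\|\partial^{\bsm_1}a/a\|_{L^\infty(D)}\le\bsb^{\bsm_1}$ of \eqref{eq:est1}, each term is controlled by $\bsb^{\bsm_1}\,\|a^{1/2}\nabla\partial^{\bsm_2}(u-u_h)\|_{L^2(D)}\,\|a^{1/2}\nabla\partial^{\bsm_3}(z-z_h)\|_{L^2(D)}$. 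Note that each summand carries exactly one primal and one adjoint energy error, which is the source of the factor $h^2$. This reduces the theorem to an energy-norm bound on the parametric derivatives of the FE error of the primal and adjoint problems.

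That energy-norm bound is the crux. Differentiating the primal weak form and its FE counterpart and subtracting, the derivative error $e_\bsm:=\partial^\bsm(u-u_h)$ satisfies, for $v_h\in V_h$, the relation $\int_D a\,\nabla e_\bsm\cdot\nabla v_h=-\sum_{\bsk<\bsm}\binom{\bsm}{\bsk}\int_D(\partial^{\bsm-\bsk}a)\,\nabla e_\bsk\cdot\nabla v_h$. I split $e_\bsm$ into the Galerkin error of the differentiated \emph{continuous} problem (whose right-hand side uses the exact lower derivatives $\partial^\bsk u$) plus a source-perturbation term in $V_h$ coupling back to the lower-order errors $e_\bsk$. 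The first part is bounded by C\'ea's lemma together with \eqref{eq:Est2}/\eqref{eq:GalErr} and Theorem~\ref{thm:key0} for $\|\Delta\partial^\bsm u\|_{L^2(D)}$, contributing $h\,T_2(\bsy)\|\Delta\partial^\bsm u\|_{L^2(D)}$; the second, tested with itself and treated as above, contributes $\sum_{\bsk<\bsm}\binom{\bsm}{\bsk}\bsb^{\bsm-\bsk}\|a^{1/2}\nabla e_\bsk\|_{L^2(D)}$. The resulting inequality has precisely the form required by Lemma~\ref{lem:recur} with $\bsbeta=\bsb$, so that $\|a^{1/2}\nabla e_\bsm\|_{L^2(D)}\lesssim h\sum_{\bsk\le\bsm}\binom{\bsm}{\bsk}\Lambda_{|\bsk|}\bsb^\bsk\,T_2(\bsy)\|\Delta\partial^{\bsm-\bsk}u\|_{L^2(D)}$. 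Inserting Theorem~\ref{thm:key0}, using $\bsb\le\overline{\bsb}$, the estimate $\Lambda_{|\bsk|}\le|\bsk|!\,2^{|\bsk|}$ from \eqref{eq:lambda-bound} with $\alpha=1/2$, and the counting identity \eqref{ident} (whence $\sum_i\binom{|\bsm|}{i}i!(|\bsm|-i+1)!=\tfrac12(|\bsm|+2)!$), I obtain $\|a^{1/2}\nabla\partial^\bsm(u-u_h)\|_{L^2(D)}\lesssim h\,T_1(\bsy)T_2(\bsy)\sqrt{\hat{a}(\bsy)/\check{a}(\bsy)}\,\|f\|_{L^2(D)}\,\overline{\bsb}^\bsm\,2^{|\bsm|}\,(|\bsm|+2)!$, with the identical statement for $z-z_h$ (replacing $f$ by $g$).

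Finally I assemble. Multiplying the primal and adjoint energy bounds produces $h^2$, the constant $T_1^2(\bsy)T_2^2(\bsy)\,\hat{a}(\bsy)/\check{a}(\bsy)=H(\bsy)$ of \eqref{eq:defH}, and $\|f\|_{L^2(D)}\|g\|_{L^2(D)}$. It then remains to sum $\sum_{\bsm_1+\bsm_2+\bsm_3=\bsnu}\binom{\bsnu}{\bsm_1,\bsm_2,\bsm_3}\bsb^{\bsm_1}\overline{\bsb}^{\bsm_2+\bsm_3}2^{|\bsm_2|+|\bsm_3|}(|\bsm_2|+2)!(|\bsm_3|+2)!$. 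Bounding $\bsb^{\bsm_1}\overline{\bsb}^{\bsm_2+\bsm_3}\le\overline{\bsb}^\bsnu$, grouping multi-indices by their orders via \eqref{ident}, convolving the two $(\cdot+2)!$ factors through the Vandermonde identity $\sum_{a_2+a_3=s}\binom{a_2+2}{2}\binom{a_3+2}{2}=\binom{s+5}{5}$, and observing that the residual sum over the order $a_1$ of the $a$-derivative carries a factor $2^{-a_1}/a_1!$ whose series is bounded by $e^{1/2}$, yields the clean factor $2^{|\bsnu|}(|\bsnu|+5)!$ and completes the proof; the exponent $5=2+2+1$ records the two energy-error offsets and the convolution. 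The main obstacle is the third paragraph: setting up the recursion correctly (isolating the two distinct error contributions) and tracking the combinatorial constants so that exactly $2^{|\bsnu|}$ and $(|\bsnu|+5)!$ emerge — in particular preventing the power of $2$ from doubling. A minor technical point to dispatch along the way is the justification that all parametric derivatives exist in the relevant spaces and that the formal Leibniz manipulations hold in $L^2(D)$ and $V$, which follows by induction on $|\bsnu|$ exactly as in the proof of Theorem~\ref{thm:key0}.
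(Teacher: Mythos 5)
Your proposal is correct and follows essentially the same route as the paper's proof: the same Aubin--Nitsche duality via the adjoint problem, the same splitting of $\partial^\bsm(u-u_h)$ by the parametric Ritz projection combined with the differentiated Galerkin orthogonality and Lemma~\ref{lem:recur}, and the same inputs from Theorem~\ref{thm:key0} and Proposition~\ref{prop:BilRegPrp}(c). Your single multinomial convolution (the Vandermonde identity plus the $e^{1/2}$ series bound) is merely a repackaging of the paper's two nested binomial identities, so the combinatorics coincide as well.
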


\begin{proof}
Let $\bsy\in U_{\overline{\bsb}}$. We define $v^\mathcal{G}(\cdot,\bsy)\in
V$ and $v_h^\mathcal{G}(\cdot,\bsy)\in V_h$ via the adjoint problems
\begin{align}
  \scA(\bsy; w, v^\mathcal{G}(\cdot,\bsy)) &\,=\, \mathcal{G}( w )
  &&\text{for all}\quad w\in V\,,
  \label{eq:nitsche_1}
\\
  \scA(\bsy; w_h, v_h^\mathcal{G}(\cdot,\bsy)) &\,=\, \mathcal{G} ( w_h )
  &&\text{for all}\quad w_h\in V_h\,.
\label{eq:nitsche_2}
\end{align}
Due to Galerkin orthogonality for the original problem, i.e.,
\begin{equation}
\label{eq:GalerkinOrth}
\scA(\bsy;u(\cdot,\bsy)-u_h(\cdot,\bsy),
z_h)\,= \,0 \qquad \text{for all} \quad z_h \in V_h\,,
\end{equation}
on choosing the test function $w = u(\cdot,\bsy) - u_h(\cdot,\bsy)$ in
\eqref{eq:nitsche_1}, we obtain
\[
  \calG(u(\cdot,\bsy)-u_h(\cdot,\bsy))
 \,=\, \scA(\bsy; u(\cdot,\bsy)-u_h(\cdot,\bsy), v^\mathcal{G}(\cdot,\bsy) - v_h^\mathcal{G}(\cdot,\bsy)).
\]

From the Leibniz rule we have
\begin{align*}
  &\partial^\bsnu\mathcal{G}(u - u_{h})
  \,=\,
  \int_D \partial^\bsnu \left(a \,\nabla (u
  - u_{h}) \cdot \nabla (v^\mathcal{G} - v^\mathcal{G}_h) \right)\, \rd\vx \\
  &\,=\,
  \int_D \sum_{\bsm\le\bsnu} \binom{\bsnu}{\bsm} (\partial^{\bsnu-\bsm} a)\,
  \partial^\bsm \left( \nabla (u - u_{h}) \cdot \nabla (v^\mathcal{G} - v^\mathcal{G}_h) \right)\, \rd\vx \\
  &\,=\,
  \int_D \sum_{\bsm\le\bsnu} \binom{\bsnu}{\bsm} (\partial^{\bsnu-\bsm} a)\,
  \sum_{\bsk\le\bsm} \binom{\bsm}{\bsk} \nabla \partial^\bsk (u- u_{h}) \cdot
  \nabla \partial^{\bsm-\bsk} (v^\mathcal{G} - v^\mathcal{G}_h)\, \rd\vx \\
  &\,=\,
  \int_D \sum_{\bsm\le\bsnu} \binom{\bsnu}{\bsm} \frac{\partial^{\bsnu-\bsm} a}{a}
  \sum_{\bsk\le\bsm} \binom{\bsm}{\bsk}
  \left(a^{1/2}\nabla \partial^\bsk (u - u_{h})\right) \cdot
  \left(a^{1/2}\nabla \partial^{\bsm-\bsk} (v^\mathcal{G} - v^\mathcal{G}_h)\right)\, \rd\vx.
\end{align*}
Using the Cauchy-Schwarz inequality and \eqref{eq:est1}, we obtain
\begin{align} \label{eq:split}
 &|\partial^\bsnu\mathcal{G}(u - u_{h}) |
 \,\le\,
 \sum_{\bsm\le\bsnu} \binom{\bsnu}{\bsm}
 \bigg\|\frac{\partial^{\bsnu-\bsm} a}{a}\bigg\|_{L^\infty(D)}
 \sum_{\bsk\le\bsm} \binom{\bsm}{\bsk}
 \left(\int_D a\, |\nabla \partial^\bsk (u - u_{h})|^2\,\rd \vx\right)^{1/2} \nonumber\\
 &\qquad\qquad\qquad\qquad\qquad\qquad\qquad\qquad\qquad\qquad\qquad\times
 \left(\int_D a\, |\nabla \partial^{\bsm-\bsk} (v^\mathcal{G} - v^\mathcal{G}_h)|^2\,\rd \vx\right)^{1/2}
 \nonumber\\
 &\,\le\,
 \sum_{\bsm\le\bsnu} \binom{\bsnu}{\bsm}
 \bsb^{\bsnu-\bsm}
 \sum_{\bsk\le\bsm} \binom{\bsm}{\bsk}
 \| a^{1/2} \nabla \partial^\bsk (u - u_{h})\|_{L^2(D)}\,
 \| a^{1/2} \nabla \partial^{\bsm-\bsk} (v^\mathcal{G} - v^\mathcal{G}_h)\|_{L^2(D)}\,.
\end{align}

To continue, we need to obtain an estimate for $\| a^{1/2} \nabla
\partial^\bsk (u - u_{h})\|_{L^2(D)}$. Let $\calI:V\to V$ denote the
identity operator and let $\calP_h = \calP_h(\bsy) :V\to V_h$ denote the
parametric FE projection onto $V_h$ which is defined, for arbitrary $w\in
V$, by
\begin{equation} \label{eq:Ph}
\scA(\bsy; \calP_h(\bsy) w - w, z_h) \,=\, 0
\qquad\text{for all} \quad z_h\in V_h \,.
\end{equation}
In particular, we have $u_h = \calP_h  u \in V_h$ and
\begin{equation}\label{eq:IdemP}
\calP_h^2(\bsy) \,\equiv\, \calP_h(\bsy) \quad\mbox{on}\quad V_h \,.
\end{equation}
Moreover, since $\partial^\bsk u_h \in V_h$ for every $\bsk\in
\indx$, it follows from \eqref{eq:IdemP} that
\begin{equation}\label{eq:I-Ph}
(\calI - \calP_h(\bsy))(\partial^\bsk u_h(\bsy)) \,\equiv\, 0
\,.
\end{equation}
Thus
\begin{align} \label{eq:new_1}
  &\| a^{1/2} \nabla \partial^\bsk (u - u_{h})\|_{L^2(D)}
  \,=\, \| a^{1/2} \nabla \calP_h \partial^\bsk (u - u_{h}) + a^{1/2} \nabla (\calI - \calP_h) \partial^\bsk (u - u_{h}) \|_{L^2(D)} \nonumber
\\
  &\,\le\, \| a^{1/2} \nabla \calP_h \partial^\bsk (u - u_{h})\|_{L^2(D)}
  \,+\, \|a^{1/2} \nabla (\calI - \calP_h) \partial^\bsk u \|_{L^2(D)}
\,.
\end{align}
Now, applying $\partial^\bsk$ to \eqref{eq:GalerkinOrth} and separating
out the $\bsell = \bsk$ term, we get for all $z_h\in V_h$,
\begin{align} \label{eq:new_2}
 \int_D a\, \nabla \partial^{\bsk} (u-u_h)\cdot\nabla z_h \,\rd \vx
 \,=\, - \sum_{\satop{\bsell\le\bsk}{\bsell\ne\bsk}} \binom{\bsk}{\bsell}
 \int_D (\partial^{\bsk-\bsell} a)
  \nabla\partial^{\bsell}(u-u_h)\cdot\nabla z_h \,\rd \vx\,.
\end{align}
Choosing $z_h = \calP_h \partial^\bsk (u-u_h)$ and using the definition
\eqref{eq:Ph} of $\calP_h$, the left-hand side of \eqref{eq:new_2} is
equal to $\int_D a\, |\nabla \calP_h
\partial^{\bsk} (u-u_h)|^2 \,\rd \vx$. Dividing and multiplying the
right-hand side of \eqref{eq:new_2} by $a$, and using the Cauchy-Schwarz
inequality, then leads to the bound
\begin{align*}
 &\int_D a\, |\nabla \calP_h \partial^{\bsk} (u-u_h)|^2 \,\rd \vx \\
 &\ \ \le\, \sum_{\satop{\bsell\le\bsk}{\bsell\ne\bsk}} \binom{\bsk}{\bsell}
 \left\|\frac{\partial^{\bsk-\bsell} a}{a}\right\|_{L^\infty(D)}
 \left(\int_D a\, |\nabla\partial^{\bsell}(u-u_h)|^2\,\rd\vx \right)^{\frac{1}{2}}
 \left(\int_D a\, |\nabla \calP_h \partial^{\bsk} (u-u_h)|^2\,\rd\vx \right)^{\frac{1}{2}}.
\end{align*}
Canceling one common factor from both sides and using \eqref{eq:est1}, we
arrive at
\begin{align} \label{eq:new_3}
 \|a^{1/2} \nabla \calP_h \partial^{\bsk} (u-u_h)\|_{L^2(D)}
 &\,\le\, \sum_{\satop{\bsell\le\bsk}{\bsell\ne\bsk}} \binom{\bsk}{\bsell}\bsb^{\bsk-\bsell}\,
 \|a^{1/2} \nabla\partial^{\bsell}(u-u_h)\|_{L^2(D)}.
\end{align}
Substituting \eqref{eq:new_3} into \eqref{eq:new_1}, we then obtain
\begin{align*}
  &\| a^{1/2} \nabla \partial^\bsk (u - u_{h})\|_{L^2(D)} \\[1ex]
  &\,\le\, \sum_{\satop{\bsell\le\bsk}{\bsell\ne\bsk}} \binom{\bsk}{\bsell}\bsb^{\bsk-\bsell}
  \underbrace{\|a^{1/2} \nabla\partial^{\bsell}(u-u_h)\|_{L^2(D)}}_{\bbA_\bsell}
  \,+\, \underbrace{\|a^{1/2} \nabla (\calI - \calP_h) \partial^\bsk u
    \|_{L^2(D)}}_{\bbB_\bsk}.
\end{align*}
Note that we have $\bbA_\bszero = \bbB_\bszero$.
Now applying Lemma~\ref{lem:recur} with $\alpha=0.5$,
Proposition~\ref{prop:BilRegPrp}(c) and Theorem~\ref{thm:key0}, we
conclude that
\begin{align} \label{eq:nice_1}
  &\| a^{1/2} \nabla \partial^\bsk (u - u_{h})\|_{L^2(D)}
  \,\le\, \sum_{\bsell\le\bsk} \binom{\bsk}{\bsell} \Lambda_{|\bsell|}\,\bsb^\bsell\,
  \|a^{1/2} \nabla (\calI - \calP_h) \partial^{\bsk-\bsell} u
  \|_{L^2(D)}\,.\nonumber\\
&\,\lesssim\, h\, T_2(\bsy)\, \sum_{\bsell\le\bsk} \binom{\bsk}{\bsell} \Lambda_{|\bsell|}\, \bsb^\bsell\,
  \|\Delta (\partial^{\bsk-\bsell} u)\|_{L^2(D)}
  \nonumber \\
  &\,\lesssim\, h\, \|f\|_{L^2(D)} T_1(\bsy)\,T_2(\bsy)\,
  \sum_{\bsell\le\bsk} \binom{\bsk}{\bsell} \, 2^{|\bsell|} |\bsell|!\,\overline\bsb^\bsell\,
  \overline\bsb^{\bsk-\bsell}\, 2^{|\bsk-\bsell|}\,(|\bsk-\bsell|+1)! \nonumber\\
  &\,=\, h \, \|f\|_{L^2(D)} T_1(\bsy)\,T_2(\bsy)\,
  \overline\bsb^{\bsk}\, 2^{|\bsk|}\, \frac{(|\bsk|+2)!}{2}\,,
\end{align}
where $T_1(\bsy)$ and $T_2(\bsy)$ are defined in \eqref{eq:defT1} and
\eqref{eq:defT2}, respectively, and where in the last step we used the
identity
\[
  \sum_{\bsell\le\bsk} \binom{\bsk}{\bsell} |\bsell|! \,(|\bsk-\bsell|+1)!
  \,=\, \frac{(|\bsk|+2)!}{2}\,,
\]
which can be derived in the same way as \eqref{eq:identThm6}.

Since the bilinear form $\scA(\bsy;\cdot,\cdot)$ is symmetric and since
the representer $g$ for the linear functional $\mathcal{G}(\cdot)$ is in
$L^2(D)$, all the results in Section \ref{sec:Prel} hold verbatim also for
the adjoint problem \eqref{eq:nitsche_1} and for its FE discretisation
\eqref{eq:nitsche_2}. Hence, as in \eqref{eq:nice_1}, we obtain
\begin{align} \label{eq:nice_2}
  \| a^{1/2} \nabla \partial^{\bsm-\bsk} (v^\mathcal{G} - v^\mathcal{G}_{h})\|_{L^2(D)}
  &\lesssim h \, \|g\|_{L^2(D)} T_1(\bsy)\,T_2(\bsy) \,
  \overline\bsb^{\bsm-\bsk}\, 2^{|\bsm-\bsk|} \frac{(|\bsm-\bsk|+2)!}{2}\,.
\end{align}
Substituting \eqref{eq:nice_1} and \eqref{eq:nice_2} into \eqref{eq:split}
yields
\begin{align*}
 |\partial^\bsnu\mathcal{G}(u - u_{h}) |
 &\,\lesssim\,
 h^2\, \|f\|_{L^2(D)} \,\|g\|_{L^2(D)}\,
 T_1^2(\bsy)\,T_2^2(\bsy) \nonumber\\
 &\,\times
 \sum_{\bsm\le\bsnu} \binom{\bsnu}{\bsm}
 \bsb^{\bsnu-\bsm}
 \sum_{\bsk\le\bsm} \binom{\bsm}{\bsk}
 \overline\bsb^{\bsk}\, 2^{|\bsk|}\, \frac{(|\bsk|+2)!}{2}\,
 \overline\bsb^{\bsm-\bsk}\, 2^{|\bsm-\bsk|}\, \frac{(|\bsm-\bsk|+2)!}{2}\,.
\end{align*}
Using \eqref{ident} we can obtain a similar identity to
\eqref{eq:identThm6},
\begin{align*}
 \sum_{\bsk\le\bsm} \binom{\bsm}{\bsk} \frac{(|\bsk|+2)!}{2}\,\frac{(|\bsm-\bsk|+2)!}{2}
 \,=\, \frac{(|\bsm|+5)!}{120}\,.
\end{align*}
Using again \eqref{ident}, with $n=|\bsnu|$ we have
\begin{align*}
 &\sum_{\bsm\le\bsnu} \binom{\bsnu}{\bsm} 2^{|\bsm|}\,\frac{(|\bsm|+5)!}{120} \\
 &\,=\, \sum_{i=0}^{n} \binom{n}{i} 2^{i}\,\frac{(i+5)!}{120}
\,=\, n!\,\sum_{i=0}^{n} \frac{(i+1)(i+2)(i+3)(i+4)(i+5) 2^{i}}{120(n-i)!}
 \,\le\, \frac{(n+5)!}{120} 2^n e\,.
\end{align*}
These, together with $b_j\le\overline{b}_j$ for all $j\ge 1$, yield the
required bound in the theorem.
\end{proof}

Now, to estimate the $\calW_s$-norm of $\calG(u-u_h)$, we need to bound
its mixed first partial derivatives with respect to $\bsy =
(y_1,\ldots,y_s, 0,0,\ldots)$. The result in Theorem~\ref{thm:key1} was
more general. In the following, we will only consider multi-indices
$\bsnu$ where each $\nu_j \le 1$. As in the definition of the norm on
$\cW_s$, we will use subsets $\setu \subseteq \{1:s\}$ of active indices
instead of multi-indices.

\begin{corollary} \label{cor:psi}
Let $\hat{a}_0 := \max_{\vx\in \overline{D}} a_0(\vx)$ and  $\check{a}_0
:= \min_{\vx\in \overline{D}} a_0(\vx)$. For the weight functions $\psi_j$
defined by \eqref{eq:psi} with parameters $\alpha_j>
9\overline{b}_j$, we have
\begin{align*}
  &\| \mathcal{G}(u_s-u_{h,s})\|_{\calW_s}^2
  \,\lesssim \,
  K^2\, h^4
  \sum_{\setu\subseteq\{1:s\}}
  \frac{[(|\setu|+5)!]^2}{\gamma_\setu}
  \prod_{j\in\setu} \widetilde{b}_j^2,
\ \ \text{with} \ \
\widetilde{b}_j := 
  \frac{\overline{b}_j}{\exp(81\overline{b}_j^2/2)\,\Phi(9\overline{b}_j)\,\sqrt{\alpha_j-9\overline{b}_j}
}
\end{align*}
and
\[
  K \,:=\, \|f\|_{L^2(D)}\,\|g\|_{L^2(D)}\,
  \left(1 + \frac{\|\nabla a_0\|_{L^\infty(D)}}{\check{a}_0}\right)^2\,
  \frac{\hat{a}_0^{3}}{\check{a}_0^{4}}
  \exp\bigg(\frac{81}{2} \sum_{j\ge 1} \overline{b}_j^2 + \frac{18}{\sqrt{2\pi}} \sum_{j\ge 1}\overline{b}_j\bigg).
\]
\end{corollary}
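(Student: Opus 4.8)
The plan is to feed the pointwise derivative bound of Theorem~\ref{thm:key1} directly into the definition \eqref{eq:Wnorm} of the $\calW_s$-norm and then to reduce everything to one-dimensional Gaussian and exponential integrals. Since the norm involves only mixed \emph{first} derivatives, I identify a subset $\setu\subseteq\{1:s\}$ with the multi-index $\bsnu=\mathbf{1}_\setu$, for which $|\bsnu|=|\setu|$, $\overline{\bsb}^{\bsnu}=\prod_{j\in\setu}\overline{b}_j$ and $2^{|\bsnu|}=2^{|\setu|}$. Applying Theorem~\ref{thm:key1} to the $s$-truncated problem and using $\phi\ge 0$, I bound the inner integrand of \eqref{eq:Wnorm} by
\[
  \Big|\int_{\bbR^{s-|\setu|}}\partial^\bsnu\calG(u_s-u_{h,s})\prod_{j\notin\setu}\phi(y_j)\,\rd\bsy_{\{1:s\}\setminus\setu}\Big|
  \,\lesssim\, h^2\|f\|_{L^2(D)}\|g\|_{L^2(D)}\Big(\prod_{j\in\setu}\overline{b}_j\Big)2^{|\setu|}(|\setu|+5)!\,J_\setu(\bsy_\setu),
\]
where $J_\setu(\bsy_\setu):=\int_{\bbR^{s-|\setu|}}H(\bsy)\prod_{j\notin\setu}\phi(y_j)\,\rd\bsy_{\{1:s\}\setminus\setu}$. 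Squaring, integrating against $\prod_{j\in\setu}\psi_j^2$ and summing the weighted contributions, the whole estimate collapses to controlling $\int_{\bbR^{|\setu|}}J_\setu(\bsy_\setu)^2\prod_{j\in\setu}\psi_j^2(y_j)\,\rd\bsy_\setu$ by $(K/(\|f\|_{L^2(D)}\|g\|_{L^2(D)}))^2\,\overline{\mathcal{J}}^{-|\setu|}$; the claimed factors $((|\setu|+5)!)^2$ and $\prod_{j\in\setu}(4\overline{b}_j^2/\overline{\mathcal{J}})$ then emerge automatically.

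The heart of the argument, and the step I expect to be most delicate, is a clean \emph{exponential} bound on $H(\bsy)$ of \eqref{eq:defH}. Writing $a(\cdot,\bsy)=a_0\exp(\Psi)$ with $\Psi=\sum_j\sqrt{\mu_j}\xi_j y_j$ as in \eqref{eq:axy}, I use $\check{a}_0 e^{-\Psi_{\max}}\le\check{a}(\bsy)\le\hat{a}(\bsy)\le\hat{a}_0 e^{\Psi_{\max}}$ with $\Psi_{\max}:=\sum_j b_j|y_j|$, together with the pointwise identity $|\nabla a|/a=|\nabla\log a_0+\nabla\Psi|$, to obtain $\|\nabla a(\cdot,\bsy)\|_{L^\infty(D)}/\check{a}(\bsy)\le(\hat{a}_0/\check{a}_0)e^{2\Psi_{\max}}(\|\nabla a_0\|_{L^\infty(D)}/\check{a}_0+\sum_j\overline{b}_j|y_j|)$. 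Inserting these into $H=(\hat{a}^2/\check{a}^3)(1+\|\nabla a\|_{L^\infty(D)}/\check{a})^2$ (a regrouping of \eqref{eq:defT1}--\eqref{eq:defT2} using $T_2^2=\hat{a}$), I track the exponential rate: the prefactor $\hat{a}^2/\check{a}^3$ contributes $e^{5\Psi_{\max}}$ and the squared bracket contributes $e^{4\Psi_{\max}}$ together with a polynomial $(1+\sum_j\overline{b}_j|y_j|)^2$ and the constant $(\hat{a}_0^4/\check{a}_0^5)(1+\|\nabla a_0\|_{L^\infty(D)}/\check{a}_0)^2$. Absorbing the polynomial via $(1+t)^2\le e^{2t}$ and using $b_j\le\overline{b}_j$ gives the decisive estimate
\[
  H(\bsy)\,\le\, C_0\,\exp\Big(11\sum_{j\ge1}\overline{b}_j|y_j|\Big),
  \qquad C_0:=\frac{\hat{a}_0^4}{\check{a}_0^5}\Big(1+\frac{\|\nabla a_0\|_{L^\infty(D)}}{\check{a}_0}\Big)^2,
\]
in which the rate $9+2=11$ is precisely the $11\overline{b}_j$ of the hypothesis.

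With this product form the two integrations decouple coordinatewise. For the \emph{inactive} variables I need the sharp one-sided Gaussian moment bound $\int_{\bbR}e^{\beta|y|}\phi(y)\,\rd y=e^{\beta^2/2}\,2\Phi(\beta)\le\exp(\beta^2/2+\tfrac{2}{\sqrt{2\pi}}\beta)$, where the refinement $2\Phi(\beta)\le e^{\sqrt{2/\pi}\,\beta}$ (a consequence of $\operatorname{erf}(x)\le\tfrac{2}{\sqrt\pi}x$) is what keeps the product over $j\notin\setu$ bounded \emph{uniformly in $s$} rather than growing like $2^{s}$. Taking $\beta=11\overline{b}_j$ and multiplying over all inactive $j$ reproduces exactly $\exp(\tfrac{121}{2}\sum_j\overline{b}_j^2+\tfrac{22}{\sqrt{2\pi}}\sum_j\overline{b}_j)$, so that $J_\setu(\bsy_\setu)\le(K/(\|f\|_{L^2(D)}\|g\|_{L^2(D)}))\prod_{j\in\setu}e^{11\overline{b}_j|y_j|}$ with $C_0\exp(\cdots)$ assembling the full constant $K$. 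For the \emph{active} variables, squaring produces $e^{22\overline{b}_j|y_j|}$, and with $\psi_j^2(y_j)=e^{-2\alpha_j|y_j|}$ from \eqref{eq:psi} each factor integrates to $\int_\bbR e^{(22\overline{b}_j-2\alpha_j)|y|}\,\rd y=(\alpha_j-11\overline{b}_j)^{-1}\le\overline{\mathcal{J}}^{-1}$; this is exactly where the assumptions $\alpha_j>11\overline{b}_j$ and $\overline{\mathcal{J}}>0$ enter, guaranteeing convergence. Collecting the constant $K$, the product $\prod_{j\in\setu}(4\overline{b}_j^2/\overline{\mathcal{J}})$ and the factorial $((|\setu|+5)!)^2$, and summing over $\setu$ with the weights $1/\gamma_\setu$, yields the stated bound. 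The only genuine obstacle is the careful bookkeeping of the exponential rate in $H$ and the insistence on the sharp (not merely finite) inactive-variable integral, since a loose per-coordinate constant exceeding $1$ there would destroy the $s$-independence on which the entire QMC construction depends.
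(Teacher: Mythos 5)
Your proposal is correct and follows essentially the same route as the paper's own proof: you apply Theorem~\ref{thm:key1} with $\bsnu=\mathbf{1}_\setu$ inside the $\calW_s$-norm \eqref{eq:Wnorm}, establish the same key estimate $H(\bsy)\lesssim C_0\prod_{j\ge1}\exp(11\,\overline{b}_j|y_j|)$ (your exponent bookkeeping $5+4+2$ versus the paper's $9+2$ is only a cosmetic regrouping), and reduce to the identical pair of univariate integrals, the sharp Gaussian bound $2\exp(\beta^2/2)\Phi(\beta)\le\exp(\beta^2/2+\tfrac{2}{\sqrt{2\pi}}\beta)$ for inactive variables and $\int_\bbR e^{(22\overline{b}_j-2\alpha_j)|y|}\,\rd y=(\alpha_j-11\overline{b}_j)^{-1}\le\overline{\mathcal{J}}^{-1}$ for active ones. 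Your remark on why the per-coordinate Gaussian factor must be of the form $\exp(c_j)$ with $\sum_j c_j<\infty$ (rather than a constant exceeding $1$) to preserve $s$-independence is exactly the point of the paper's estimate as well.
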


\begin{proof}
We begin by estimating $H(\bsy)$ defined in \eqref{eq:defH}. It
follows from \eqref{eq:axy} with $a_*=0$ that
\[
  \nabla a(\vec{x},\bsy)
  \,=\, a(\vec{x},\bsy) \bigg(\frac{\nabla a_0(\vec{x})}{a_0(\vec{x})} + \sum_{j\ge1} \sqrt{\mu_j}\,\nabla\xi_j(\vec{x})\, y_j\bigg)\;,
\]
leading to
\[
  \|\nabla a(\cdot,\bsy)\|_{L^\infty(D)}
  \,\le\, \hat{a}(\bsy) \bigg(\frac{\|\nabla a_0\|_{L^\infty(D)}}{\check{a}_0}
  + \sum_{j\ge1} \overline{b}_j\, |y_j|\bigg)\;.
\]
Since $1+x \le \exp(x)$ for $x \ge 0$, we have
\begin{align*}
 H(\bsy)
 &\,\le\,
  \bigg(1 + \frac{\|\nabla a_0\|_{L^\infty(D)}}{\check{a}_0} +
   \sum_{j\ge1} \overline{b}_j\, |y_j|\bigg)^2 \,
  \frac{\hat{a}(\bsy)^{3}}{\check{a}(\bsy)^{4}} \\
  &\,\le\,
  \left(1 + \frac{\|\nabla a_0\|_{L^\infty(D)}}{\check{a}_0}\right)^2
  \,\exp \bigg(2\sum_{j\ge1} \overline{b}_j\, |y_j|\bigg)
  \frac{\hat{a}_0^{3}}{\check{a}_0^{4}}\exp \bigg(7 \sum_{j\ge1} b_j\, |y_j|\bigg) \\
  &\,\le\,
  \left(1 + \frac{\|\nabla a_0\|_{L^\infty(D)}}{\check{a}_0}\right)^2
  \frac{\hat{a}_0^{3}}{\check{a}_0^{4}} \prod_{j\ge1} \exp (9\overline{b}_j\, |y_j| )\;.
\end{align*}
Therefore, with $K_* := \|f\|_{L^2(D)}\,\|g\|_{L^2(D)}\, (1+\frac{\|\nabla
 a_0\|_{L^\infty(D)}}{\check{a}_0})^2 \hat{a}_0^{3}/\check{a}_0^{4}$, it follows from
Theorem~\ref{thm:key1} and the definition of the $\calW_s$--norm
  in \eqref{eq:Wnorm} that
\begin{align}\label{Wsnorm:bound}
  &\| \calG(u_s-u_{h,s})\|_{\calW_s}^2
  \,\lesssim\,  h^4\, K_*^2
  \sum_{\setu\subseteq\{1:s\}} \Bigg[\frac{[(|\setu|+5)!]^2\prod_{j\in\setu} (4\bar{b}_j^2)}{\gamma_\setu} \\
  &\qquad\times
  \int_{\bbR^{|\setu|}}
  \left(
  \int_{\bbR^{s-|\setu|}}
  \prod_{j\in\{1:s\}\setminus\setu} \exp (9\overline{b}_j\, |y_j| )\phi(y_j)\,\rd\bsy_{\{1:s\}\setminus\setu}
  \right)^2
  \prod_{j\in\setu} \exp (18\overline{b}_j\, |y_j| )\psi_j^2(y_j)
  \,\rd\bsy_\setu \Bigg], \nonumber
\end{align}
leading to the univariate integrals
\[
  1\ \,\le\,
  \int_{-\infty}^\infty \exp (9\overline{b}_j\, |y| ) \phi(y)\,\rd y
  \,=\, 2\exp\left(\frac{81}{2}\overline{b}_j^2\right) \Phi\left(9\overline{b}_j\right)
  \,\le\, \exp\left(\frac{81}{2}\overline{b}_j^2 + \frac{18}{\sqrt{2\pi}}\overline{b}_j\right)
\]
and
\begin{equation} \label{eq:change}
  \int_{-\infty}^\infty \exp (18\, \overline{b}_j\, |y| )\psi_j^2(y) \,\rd y
  \,=\, \frac{1}{\alpha_j - 9\overline{b}_j}
\,  .
\end{equation}
These, together with \eqref{Wsnorm:bound}, then yield the estimate on the
$\calW_s$-norm of $\calG(u-u_h)$.
\end{proof}

\begin{theorem} \label{thm:final}
For every $f\in L^2(D)$ and for every $\calG\in L^2(D)^*$ with representer
$g \in L^2(D)$, consider the multilevel QMC algorithm defined by
\eqref{ML-rQMC} with $s_\ell = s$ and $R_\ell = R$ for all $\ell =
0,\ldots,L$. Suppose that the sequence $\overline{b}_j$ defined by
\eqref{eq:barbj} satisfies
\[
  \sum_{j\ge1} \overline{b}_j^q < \infty
  \qquad\mbox{for some}\qquad 0<q<1\;.
\]
For each $\ell=1,\ldots,L$, let the generating vector for the randomly
shifted lattice rule $\mathcal{Q}_\ell$ be constructed using a
component-by-component algorithm \cite{NK14}, with weight parameters
\[
  \gamma_\setu \,:=\,
  \Bigg(
  \frac{(|\setu|+5)!}{120} \prod_{j\in\setu}
  \frac{\widetilde{b}_j}{\sqrt{\varrho_j(\lambda)}}
  \Bigg)^{2/(1+\lambda)}
 \quad \text{and} \quad \alpha_j \,:=\,
  \frac{1}{2} \Bigg(9\overline{b}_j + \sqrt{81\overline{b}_j^2 + 1 - \frac{1}{2\lambda}}\Bigg)
\]
in the weight functions \eqref{eq:psi}, where $\widetilde{b}_j$ is as
defined in Corollary \ref{cor:psi} and
\begin{equation}\label{def:lambda}
  \lambda \,:=\,
  \begin{cases}
  \frac{1}{2-2\delta} \quad\mbox{for some}\quad \delta\in (0,1/2) & \mbox{when}\quad q\in (0,2/3), \\
  \frac{q}{2-q} & \mbox{when}\quad q\in (2/3,1).
  \end{cases}
\end{equation}
Let the generating vector
for the randomly shifted lattice rule $\mathcal{Q}_0$ be constructed as in
\cite{gknsss:2012} with $\lambda$ as defined in \eqref{def:lambda}. Then
\begin{equation} \label{eq:our_M3}
\mathbb{V}_\bsDelta [\mathcal{Q}_\ell (F_\ell - F_{\ell-1})]
\,\lesssim\, D_{\bsgamma}(\lambda)  \, R^{-1} \,
[\varphi_{\rm tot}(N_\ell)]^{-1/\lambda}  \, h_{\ell}^4\,,\quad \text{for
    all} \ \ \ell=0,\ldots,L,
\end{equation}
where $D_{\bsgamma}(\lambda)<\infty$ is independent of $s$ and $\ell$.
\end{theorem}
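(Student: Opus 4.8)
The plan is to verify Assumption~M3 with $\beta=4$ and an $s$--independent constant by feeding Corollary~\ref{cor:psi} into the component-by-component error bound \eqref{eq:cbc} and then exploiting the precise form of the weights $\gamma_\setu$ to collapse the two resulting sums into a single convergent series. For each level $\ell\ge 1$ I would first bound $\|F_\ell-F_{\ell-1}\|_{\calW_s}$. Since $s_\ell=s$ for all $\ell$, we have $F_\ell-F_{\ell-1}=\mathcal{G}(u_{h_\ell,s}-u_{h_{\ell-1},s})$, so inserting the exact truncated solution $u_s$ and using the triangle inequality gives
\[
\|F_\ell-F_{\ell-1}\|_{\calW_s}\le \|\mathcal{G}(u_s-u_{h_\ell,s})\|_{\calW_s}+\|\mathcal{G}(u_s-u_{h_{\ell-1},s})\|_{\calW_s}.
\]
Applying Corollary~\ref{cor:psi} to each term (with $h=h_\ell$ and $h=h_{\ell-1}$) and using $h_{\ell-1}\eqsim h_\ell$ yields $\|F_\ell-F_{\ell-1}\|_{\calW_s}^2\lesssim K^2 h_\ell^4\, S_\bsgamma$, where $S_\bsgamma:=\sum_{\setu\subseteq\{1:s\}} c_\setu/\gamma_\setu$ and $c_\setu:=[(|\setu|+5)!]^2\prod_{j\in\setu}(4\overline{b}_j^2/\overline{\mathcal{J}})$. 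Substituting into \eqref{eq:cbc} and using \eqref{totient} shows \eqref{eq:our_M3} with
\[
D_\bsgamma(\lambda)\;\lesssim\; K^2\,\Bigg(\sum_{\setu\subseteq\{1:s\}}\gamma_\setu^\lambda\prod_{j\in\setu}\varrho_j(\lambda)\Bigg)^{1/\lambda}\,S_\bsgamma,
\]
so it only remains to show this is finite and independent of $s$.

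The key observation is that the weights are chosen precisely to balance the two sums. Bounding $\varrho_j(\lambda)\le\varrho_{\max}(\lambda)$ (valid since $\varrho_j$ is increasing in $\alpha_j$) and computing directly with $\gamma_\setu=(c_\setu/\varrho_{\max}(\lambda)^{|\setu|})^{1/(1+\lambda)}$, one finds that both $\gamma_\setu^\lambda\,\varrho_{\max}(\lambda)^{|\setu|}$ and $c_\setu/\gamma_\setu$ equal the common value $c_\setu^{\lambda/(1+\lambda)}\varrho_{\max}(\lambda)^{|\setu|/(1+\lambda)}$. Hence both sums are dominated by the single series
\[
T\;:=\;\sum_{\setu\subset\bbN,\;|\setu|<\infty}[(|\setu|+5)!]^{p}\prod_{j\in\setu}\tilde{b}_j,\qquad p:=\frac{2\lambda}{1+\lambda},\quad \tilde{b}_j:=\Big(\tfrac{4\overline{b}_j^2}{\overline{\mathcal{J}}}\Big)^{\lambda/(1+\lambda)}\varrho_{\max}(\lambda)^{1/(1+\lambda)},
\]
extended to all finitely supported index sets (which bounds the sum over subsets of $\{1:s\}$ uniformly in $s$), giving $D_\bsgamma(\lambda)\lesssim K^2\,T^{(1+\lambda)/\lambda}$.

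The main obstacle — and the only point at which the hypotheses on $\overline{\bsb}$ and the choice \eqref{def:lambda} are used — is establishing $T<\infty$. I would group by cardinality and use $\sum_{|\setu|=\ell}\prod_{j\in\setu}\tilde{b}_j\le(\sum_j\tilde{b}_j)^\ell/\ell!$, reducing matters to $T\le\sum_{\ell\ge0}[(\ell+5)!]^{p}B^\ell/\ell!$ with $B:=\sum_j\tilde{b}_j$. Two conditions must hold at once. First, $B<\infty$: since $\tilde{b}_j\eqsim\overline{b}_j^{\,p}$ and $\overline{b}_j\to0$, this follows once $p\ge q$, i.e.\ $\lambda\ge q/(2-q)$, which \eqref{def:lambda} guarantees (with equality for $q\in(2/3,1)$, and with $\lambda>1/2$, hence $p>2/3>q$, for $q\in(0,2/3)$). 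Second, the factorial series must converge: writing $[(\ell+5)!]^{p}/\ell!=[(\ell+1)\cdots(\ell+5)]^{p}(\ell!)^{p-1}$, the ratio test applies because $p<1$ strictly, so $(\ell!)^{p-1}$ decays faster than any geometric factor $B^\ell$ grows; both branches of \eqref{def:lambda} indeed yield $p<1$. This proves $T<\infty$ uniformly in $s$ and completes the estimate for $\ell\ge1$. Finally, the level-$0$ term ($F_{-1}=0$, $h_0\eqsim1$) is covered by the single-level QMC theory of \cite{gknsss:2012}, whose construction with the same $\lambda$ delivers an $s$--independent bound of the same form, so \eqref{eq:our_M3} holds for all $\ell=0,\ldots,L$.
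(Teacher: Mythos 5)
Your proposal is correct and follows essentially the same route as the paper's proof: a triangle inequality around $u_s$, Corollary~\ref{cor:psi} substituted into the CBC bound \eqref{eq:cbc} with $\varrho_j(\lambda)\le\varrho_{\max}(\lambda)$, yielding the same two-factor constant $D_{\bsgamma}(\lambda)$, with the level $\ell=0$ handled by the single-level theory of \cite{gknsss:2012} exactly as the paper does. The only difference is one of self-containedness: where the paper verifies finiteness of $D_{\bsgamma}(\lambda)$ by deferring to ``the same arguments as in the proof of \cite[Thm.~20]{gknsss:2012}'', you carry out that argument explicitly (the weights balancing both sums into a common series, cardinality grouping, $p=2\lambda/(1+\lambda)\ge q$ and $p<1$, ratio test), and your computations there are accurate.
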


\begin{proof}
First, let $\ell \ge 1$. Using \eqref{eq:cbc} and the triangle inequality,
we obtain
\begin{align*}
  \mathbb{V}_\bsDelta [\mathcal{Q}_\ell (F_\ell - F_{\ell-1})]
  \le\, 2 R^{-1} & \Bigg(\sum_{\emptyset\ne\setu\subseteq\{1:s\}}
  \gamma_\setu^\lambda\,
    \prod_{j\in\setu}\varrho_j(\lambda)\Bigg)^{1/\lambda} \,
    [\varphi_{\rm tot}(N_\ell)]^{-1/\lambda} \\
  & \Big(\|\calG(u_{s}-u_{h_{\ell},s})\|_{\calW_{s}}^2
   + \|\calG(u_{s}-u_{h_{\ell-1},s})\|_{\calW_{s}}^2 \Big)\,.
\end{align*}
The bound in \eqref{eq:our_M3} now follows from Corollary~\ref{cor:psi}
with
\[
  D_{\bsgamma}(\lambda)
  \,:=\,
  \Bigg(
   \sum_{|\setu|<\infty} \gamma_\setu^\lambda\, \prod_{j\in\setu}\varrho_j(\lambda) \Bigg)^{1/\lambda}\,
   \Bigg(
  \frac{[(|\setu|+5)!]^2}{\gamma_\setu}
  \prod_{j\in\setu} \widetilde{b}_j^2   \Bigg) \;.
\]
The fact that $D_{\bsgamma}(\lambda)$ is finite can be verified following
the same arguments as in the proofs of \cite[Thm.~20 and Cor.~21]{gknsss:2012}.

Since by definition $b_j \le \overline{b}_j$ and thus $\overline{\bsb} \in
\ell^q(\mathbb{N})$ implies $\bsb \in \ell^q(\mathbb{N})$, the result for
$\ell=0$ follows from \cite[Thm.~20]{gknsss:2012} with
$D_{\bsgamma}(\lambda) = C_{\boldsymbol{\gamma}}(\lambda)$, defined in
\cite[Eqn.~(4.19)]{gknsss:2012}.
\end{proof}

Together with \eqref{totient}, Theorem~\ref{thm:final} shows that
Assumption~M2 of Theorem~\ref{thm:mlqmc} holds with $\beta = 4$ and
$\lambda$ defined in \eqref{def:lambda}.

\begin{remark} \label{rem:support}
As an example, let us consider the case where the KL expansion in
\eqref{kle} arises from a Gaussian field with Mat\'ern covariance
with smoothness parameter $\nu$, as defined in Section
\ref{sec:numerical}.
We have from \cite[Corollary~5]{gknsss:2012} that $\mu_j \lesssim
j^{-(1+2\nu/d)}$. Moreover, we see from the proof of
\cite[Prop.~9]{gknsss:2012} that $\|\nabla\xi_j\|_{L^\infty(D)} \lesssim
\mu_j^{-\tilde{r}/r}$ for all $d/2+1 < \tilde{r} < r < d+2\nu$, allowing
us to infer that $\overline{b}_j \lesssim \mu_j^{1/2-\tilde{r}/r}$. To
ensure that the assumption $\sum_{j\ge 1} \overline{b}_j^q < \infty$ in
Theorem~\ref{thm:final} holds, we need
\[
  q\, \bigg(1+ \frac{2\nu}{d} \bigg)\,\bigg(\frac{1}{2} - \frac{d/2+1}{d+2\nu} \bigg)
  \,=\, q\,\frac{\nu-1}{d} > 1\;.
\]
Therefore, a sufficient condition for the asumption to hold with $q<1$
is $\nu>d+1$. A sufficient condition for $q<2/3$ (and thus $\lambda =
1/(2-2\delta)$)  is $\nu>3d/2+1$. As we saw in
  Section \ref{sec:numerical}, these sufficient conditions do not seem to be necessary
  ones and we observe $\lambda \approx 1/2$ even for much smaller
  values of $\nu$.
\end{remark}

\begin{remark} \label{rem:correct}
Corollary~\ref{cor:psi} could be compared with
\cite[Thm.~16]{gknsss:2012}. Unfortunately, there is a small, inconsequential
error in \cite[Eq.~(4.17)]{gknsss:2012}. The factors under the first
product in \cite[Eq.~(4.17)]{gknsss:2012} should be squared, and as a
result, the denominator in \cite[Eq.~(4.11)]{gknsss:2012} should also be
squared. However, since this only amounts to the omission of a factor $\geq 1$ in the
denominator, the estimate \cite[Eq.~(4.10)]{gknsss:2012} is valid as
stated. We have checked numerically that the weights
\cite[Eq.~(4.23)]{gknsss:2012} with the adjusted
formula for \cite[Eq.~(4.11)]{gknsss:2012} lead, in all numerical
experiments reported in \cite{gknsss:2012}, to qualitatively the same
results and therefore do not affect any of the conclusions drawn in
\cite{gknsss:2012}.
\end{remark}

\paragraph*{Acknowledgments.}

Frances Kuo and Ian Sloan acknowledge the support of the Australian
Research Council under the projects DP110100442, FT130100655, DP150101770.
Robert Scheichl and Elisabeth Ullmann acknowledge support from the EPSRC
under the project EP/H051503/1. Christoph Schwab acknowledges partial
support by the European Research Council ERC and the Swiss National
Science Foundation SNSF during the preparation of this work through Grant
ERC AdG247277. A large part of this work was performed during visits of
Robert Scheichl and Christoph Schwab to the School of Mathematics and
Statistics, University of New South Wales, Australia, as well as the visit
of Frances Kuo to the Department of Mathematical Sciences, University of
Bath, UK. The authors thank Mahadevan Ganesh for suggesting an
alternative proof strategy which led to quantitative improvements in the
bounds of Theorem~\ref{thm:key0}, and James Nichols for re-running all
numerical experiments from \cite{gknsss:2012} mentioned in
Remark~\ref{rem:correct}.

\small
\bibliographystyle{plain}

\newpage

\noindent
{\bf Frances Y. Kuo}\\
{\tt f.kuo@unsw.edu.au}\\
School of Mathematics and Statistics\\
University of New South Wales\\
Sydney NSW 2052\\
Australia\\[4ex]
{\bf Robert Scheichl}\\
{\tt R.Scheichl@bath.ac.uk}\\
Department of Mathematical Sciences\\
University of Bath\\
Bath BA2 7AY
UK\\[4ex]
{\bf Christoph Schwab}\\
{\tt christoph.schwab@sam.math.ethz.ch}\\
Seminar f\"ur Angewandte Mathematik\\
ETH Z\"urich\\
R\"amistrasse 101\\
8092 Z\"urich\\
Switzerland\\[4ex]
{\bf Ian H. Sloan}\\
{\tt i.sloan@unsw.edu.au}\\
School of Mathematics and Statistics\\
University of New South Wales\\
Sydney NSW 2052\\
Australia\\[4ex]
{\bf Elisabeth Ullmann}\\
{\tt elisabeth.ullmann@ma.tum.de}\\
Department of Mathematics\\
Technische Universit\"at M\"unchen\\
Boltzmannstra{\ss}e 3\\	
85748 Garching\\
Germany

\end{document}